\setlist[enumerate,1]{font=\upshape, itemsep=.5ex}\setlist[itemize,1]{font=\upshape, itemsep=.5ex}
\def\Z{{\mathbb Z}}
\def\F{{\mathbb F}}
\def\Q{{\mathbb Q}}
\def\R{{\mathbb R}}
\def\co{\colon\thinspace}
\def\calq{\mathcal{Q}}
\def\cals{\mathcal{S}}
\def\calc{\mathcal{C}}
\def\calm{\mathcal{M}}
\def\calg{\mathcal{G}}
\def\calb{\mathcal{B}}
\def\cs{\mathbin{\#}}
\DeclareMathOperator{\cfk}{\rm CFK}
\newcommand{\spinc}{\ifmmode{{\mathfrak s}}\else{${\mathfrak s}$\ }\fi}
\newcommand{\spinct}{\ifmmode{{\mathfrak t}}\else{${\mathfrak t}$\ }\fi}
\newcommand{\spincr}{\ifmmode{{\mathfrak r}}\else{${\mathfrak r}$\ }\fi}
\def\defeq{\mathrel{\!\mathop:}=}
\def\obF{\overline{\F}}
\def\wM{\widetilde{M}}
\def\wW{\widetilde{W}}
\def\wY{\widetilde{Y}}
\def\wK{\widetilde{K}}
\def\wX{\widetilde{X}}
\def\wSig{\widetilde{\Sigma}}
\newtheorem{theorem}{Theorem} [section]
\newtheorem{lemma}[theorem]{Lemma}
\newtheorem{corollary}[theorem]{Corollary}
\newtheorem{proposition}[theorem]{Proposition}
\theoremstyle{definition}
\newtheorem{definition}[theorem]{Definition}
\newtheorem{example}[theorem]{Example}
\begin{document}
\title[Critical points in knot cobordisms]{Critical point counts in knot cobordisms: abelian and metacyclic invariants}

\author{Charles Livingston}
\thanks{This work was supported by a grant from the National Science Foundation, NSF-DMS-1505586.   }
\address{Charles Livingston: Department of Mathematics, Indiana University, Bloomington, IN 47405}\email{livingst@indiana.edu}



\begin{abstract}   For a pair of knots $K_1$ and $K_0$, we consider the set of four-tuples  of integers $(g, c_0,c_1, c_2)$  for which there is a cobordism from $K_1$ to $K_0$ of genus $g$ having  $c_i$ critical points of each index $i$.  We describe   basic properties that such sets must satisfy and then build homological obstructions to membership in the set.  These obstructions are determined by   knot invariants arising from cyclic and metacyclic covering spaces.
\end{abstract}

\maketitle


\section{Introduction}  Given a pair of  knots $K_1$ and $ K_0$ in $S^3$, let $\calg(K_1,K_0)$ denote the set of all four-tuples $(g, c_0,c_1, c_2)$ of nonnegative integers  for which there is a smooth orientable cobordism from $K_1$ to $K_0$ of genus $g$ having  $c_i$ critical points of each index $i$.   Our goal is to identify ways in which classical knot theory   can provide constraints on this set.   The value of $c_1$ is determined by those of $g, c_0$ and $c_2$, so our investigation is reduced studying the sets $\calg_g(K_1, K_0)$ consisting of nonnegative pairs $(c_0, c_2)$ for which there is a genus $g$ cobordism from $K_1$ to $K_0$ having $c_0$ and $c_2$ critical points of index 0 and 2, respectively.

  A number of well-studied problems can be formulated in terms of $\calg(K_1,U)$, where $U$ is the unknot: related topics include the knot four-genus, the slice-ribbon conjecture, problems related to the ribbon-number of ribbon knots, and general  unknotting operations.   The set  $\calg_0(K_1,K_0)$ is related to knot concordances and in particular to the existence and properties of ribbon concordances.  
Papers  that touch  on aspects of these topics include~\cites{MR3307286, MR4024565, Sarkar_2020, MR3825859, MR4186142, MR968881, hom2020ribbon, MR704925, gong2020nonorientable, friedl2021homotopy, MR634459, MR1075165, MR2262340, MR2755489, MR4017212}.  Through the use of   cyclic branched covers, this study  is related to the study of the handlebody structure of cobordisms between three-manifold, as presented, for instance, in~\cite{MR3825859}. 

We have several  goals.  The first is simply to present this perspective on knot cobordism.  Next, we describe how homological invariants of cyclic branched covers of knots provide constraints on the sets $\calg(K_1,K_0)$; this work consists of    extensions of known results concerning ribbon disks and concordances to the setting of cobordisms.  Our use of equivariant homology groups lets us further refine our results.   After this, we consider the use of metacyclic invariants; these arise from    cyclic covers of cyclic branched covers.  Finally, we list some problems that arise from this perspective.

\medskip

\noindent{\bf Summary of results.}  In seeking invariants from  $M_n(K)$,  the $n$--fold branched cover of a knot $K$, or from a $q$--fold cyclic cover of  $M_n(K)$, one faces a series of choices: the values of $n$ and $q$; the coefficients $\F$ for the homology groups; and  the choice of which $q$--fold cover to consider.  There is also a decision as to whether to take into account the module structure of the homology, viewed as an $\F[\Z_n]$--module or $\F[\Z_q]$--module.  As has been done in the past, we will   follow a path that is sufficiently complicated  to illustrate the techniques but is simple enough  to avoid technicalities.   For instance, we will work with knots for which the associated    $\F[\Z_n]$--modules are of a simple form.  

Our main result that is based on cyclic branched covers is the following.

\smallskip

\noindent{\bf Theorem~\ref{thm:homology-bound}.} {\it Suppose that $\Sigma$ is a cobordism from $K_1$ to $K_0$.    Then for all $n$, for all prime powers  $p$ satisfying $p - 1 \equiv 0 \mod n$, and for all $\zeta \in \F_p$ satisfying $\zeta^n =1$, we have 
\[
c_0(\Sigma) \ge \frac{\beta_1^\zeta( M_n(K_1),\F_p)  - \beta_1^\zeta( M_n(K_0), \F_p)}{2} -  g(\Sigma).
\]
}
\smallskip 

\noindent In this statement, $\beta_1^\zeta( M_n(K_1),\F_p)$ is the dimension of the $\zeta$--eigenspace of the $\Z_n$--action on $H_1(M_n(K),\F_p)$, where $\zeta \in \F_p$ is an  $n$--root of unity in the finite field with $p$ elements.  Averaging over the set of $n$--roots of unity yields the following simplier, but often weaker, result. 
\smallskip

\noindent{\bf Corollary~\ref{corollary:bound}.}  Under the conditions of Theorem~\ref{thm:homology-bound},
\[
c_0(\Sigma) \ge \frac{\beta_1( M_n(K_1),\F_p)  - \beta_1( M_n(K_0),\F_p)}{2(n-1)} -  g(\Sigma).
\]

A simple application of Corollary~\ref{corollary:bound} concerns  $3$--stranded pretzel knots:  $P_k =  P( 2k+1, -2k-1, 2k+1)$.  These are ribbon knots.  It follows from   Corollary~\ref{corollary:bound}   that  that if $2i + 1$ and $2j +1$ are distinct primes, then there is a genus $g$ cobordism from $\alpha P_i$ to $\beta P_j$ having with $c_0 \ge 0$ and $c_2 \ge 0$ critical points of index $0$ and $2$, respectively, if and only if $c_0 \ge  \alpha - g$ and $c_2 \ge \beta-g$.  This is proved using 2--fold branched covers.

We will also present an example that  depends on the full strength of Theorem~\ref{thm:homology-bound}, using     higher-fold covers and the eigenspace splitting.  The example is    built from the knot $10_{153}$, which is a ribbon knot with ribbon number 1 (see~\cite{MR1417494}).  We show that there exists a genus   $g$ surface  in $B^4$ bounded by $\alpha 10_{153}$ having $c_0$ and $c_2$ index 0 and index 2 critical points, respectively, if and only if $c_0 \ge \alpha+1 - g$.

Examples in which metacyclic covers yield stronger results will be built from knots $K(k, J)$  illustrated in Figure~\ref{fig:KkJ2}.   In the figure, the right band is tied in the knot $J$.  The left band has $-k$ full twists and the right band had $k+1$ full twists.  If $J$ is a ribbon knot, then this knot is ribbon: the simple closed curve that goes over each band once in opposite directions has framing 0 and has knot type $J$.  This family is of interest because the Seifert form of $K(k,J)$ is  independent of the choice of $J$, and thus no homological invariants arising from cyclic branched covers can be used to distinguish a pair $K(k, J_1)$ and $K(k, J_2)$.  However, the branched cyclic covers $M_n(K(k,J))$   themselves have cyclic covers, and the homology of these iterated covers does depend on $J$.  
In Section~\ref{sec:6_1} we will explore these examples in detail, focusing on the case of $k =1$ and $J$ is a multiple of either $K(1,U) = 6_1$ or $K(2, U)=10_3$. The obstructions we develop are determined from  3--fold cyclic covers of the 2--fold branched cover of  $S^3$, but the proofs of the results require that we consider covers of order $3^b$ for some unknown value of $b$.  This is a reflection of an underlying issue that first appeared in~\cite{MR900252}. 

\begin{figure}[h]
\labellist
 \pinlabel {\text{{$J$}}} at 395 190
  \pinlabel {$x$} at 115 175
    \pinlabel {$y$} at 225 175
\endlabellist
\includegraphics[scale=.37]{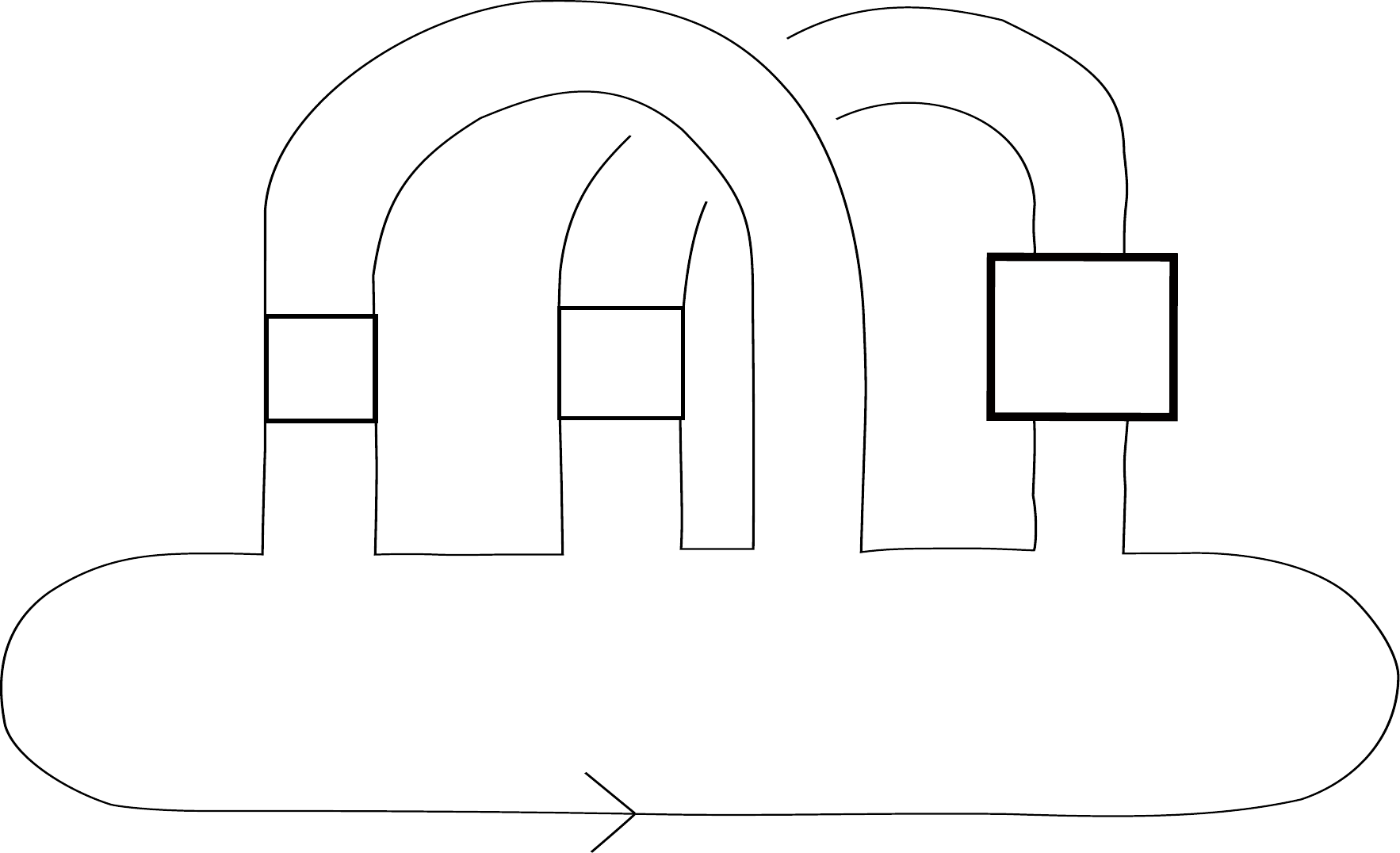} 
\caption{Basic knot $K(k,J)$.  The right band has the knot $J$ tied in it;  $x = -k$ and $y = k+1$ denote full twists.}
\label{fig:KkJ2}
\end{figure}

\smallskip

\noindent{\it Acknowledgments}  Pat Gilmer provided me with many helpful comments that greatly improved the content and exposition of this paper.


\section{The set $\calg_g(K_1, K_0)$.}  

In this section, we present in detail the knot invariants of interest  and describe some of their basic properties.

\subsection{The definition of $\calg_g(K_1, K_0)$}

We view knots  as  smooth oriented diffeomorphism classes of  pairs $(S,K)$ where $S$ is  diffeomorphic to $S^3$ and $K$ is  diffeomorphic to $S^1$.  We will be using  the shorthand  notation  $K \subset S^3$ or simply  $K$  for such a pair;   $-K$ denotes the pair $(-S, -K)$.       A cobordism from a knot $K_1 $ to a knot  $K_0$  consists of a smooth oriented surface $\Sigma \subset S^3 \times [0,1]$ for which $\partial (  S^3 \times [0,1] , \Sigma) = -(S^3,K_0)    \bigsqcup (S^3, K_1)$.   (In particular, $\Sigma \cap (S^3 \times \{1\})= K_1$.) We will assume that $\Sigma$ is {\it connected}.  We will also restrict our attention to {\it Morse cobordisms}, those for which the  projection $\Sigma \to [0,1]$ is a Morse function.

Viewing  $\Sigma$ as a twice punctured surface of genus $g$, we have that  $\beta_1(\Sigma) = 2g+1$; alternatively, $g  = (\beta_1(\Sigma) -1) /2$.  Will write $g(\Sigma)$ for the value of $g$.

We let $c_0(\Sigma), c_1(\Sigma),$ and $c_2(\Sigma)$ denote the number of local minima, saddle points, and local maxima of the projection of $\Sigma$ to $[0,1]$, respectively.    The height function on  $\Sigma$ determines a handlebody structure on $(\Sigma, K_0)$ having $c_0$, $c_1$, and $c_2$ handles of dimensions 0, 1, and 2, respectively.  We will move between the Morse function and the handlebody decomposition without further comment.

If $g(\Sigma) = 0$, then $\Sigma$ is called a {\it concordance}. If $c_2(\Sigma) = 0$, then $\Sigma$ is called a {\it ribbon cobordism}. 

An Euler characteristic argument shows that for a genus $g$ cobordism with $c_0$, $c_1$, and $c_2$ critical points of each index, we have $c_1 = c_2 +c_0 +2g  $.  Thus, to understand the counts of  critical points of possible cobordisms, or equivalently the number of handles in the  corresponding handlebody structure, we do not need to keep track of the value of $c_1$.   (Many past papers focus on $c_1$,  for instance in studying the  ribbon number of ribbon knots,   but notice that if there is a cobordism from $K_1$ to $K_0$ with $c_1$ saddle points, there is also a cobordism from $K_0$ to $K_1$ with $c_1$ saddle points;  we can more readily highlight the  asymmetry of the general problem by using $c_0$ and $c_2$.)

\begin{definition}  For knots   $K_1$ and  $K_0$,   set 
\begin{itemize}

\item  $\calg_g(K_1,K_0) = \{ (  c_0(\Sigma), c_2(\Sigma))\ \big| \ \text{$\Sigma$  is a   cobordism from $K_1$ to $K_0$ with $g(\Sigma) = g$} \} \subset  (\Z_{\ge 0})^2$.


\item  $\calg(K_1,K_0) = \{ ( g,  c_0  , c_1,  c_2 )\ \big| \  (c_0, c_2) \in \calg_g(K_1,K_0) \ \text{and}\  c_1 = c_2 + c_0 +2g \} \subset  (\Z_{\ge 0})^4$.

\end{itemize}
\end{definition}

\subsection{Elementary  properties of   $\calg_g(K_1, K_0)$.}

We begin with the following proposition, which is   no more  than a restatement of the definition of ribbon cobordism.

\begin{proposition} There exists a $c_0 \ge 0$ such that $(c_0,0) \in  \calg_g(K_1, K_0)$ if and only if there exists a genus $g$ ribbon cobordism from $K_1$ to $K_0$.
\end{proposition}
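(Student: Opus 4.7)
The plan is to unpack the definitions directly, since this proposition is essentially a restatement of what it means to be a ribbon cobordism. There is no real obstacle — both directions follow immediately once one identifies the condition $c_2(\Sigma) = 0$ appearing in the definition of $\calg_g(K_1, K_0)$ with the same condition appearing in the definition of a ribbon cobordism given earlier in the subsection.

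For the forward direction, I would start by assuming $(c_0, 0) \in \calg_g(K_1, K_0)$ for some $c_0 \ge 0$. By the definition of $\calg_g(K_1,K_0)$, there is a cobordism $\Sigma$ from $K_1$ to $K_0$ with $g(\Sigma) = g$, $c_0(\Sigma) = c_0$, and $c_2(\Sigma) = 0$. But the vanishing of $c_2(\Sigma)$ is exactly the definition of $\Sigma$ being a ribbon cobordism, so $\Sigma$ is a genus $g$ ribbon cobordism from $K_1$ to $K_0$, as required.

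For the reverse direction, suppose $\Sigma$ is a genus $g$ ribbon cobordism from $K_1$ to $K_0$. By definition this means $g(\Sigma) = g$ and $c_2(\Sigma) = 0$. Setting $c_0 = c_0(\Sigma) \ge 0$, the pair $(c_0(\Sigma), c_2(\Sigma)) = (c_0, 0)$ lies in $\calg_g(K_1, K_0)$ by definition.

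In practice I would present the proof as one or two sentences noting that both implications are immediate from the two definitions, matching the paper's own description of the proposition as being ``no more than a restatement of the definition of ribbon cobordism.'' The only thing worth flagging explicitly is that the Morse/connectedness conventions implicit in the definition of $\calg_g$ agree with those assumed for ribbon cobordisms, so no extra modification of $\Sigma$ is needed in either direction.
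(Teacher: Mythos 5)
Your proof is correct and matches the paper's treatment, which explicitly presents the proposition as a direct restatement of the definition of ribbon cobordism (where a ribbon cobordism is one with $c_2(\Sigma)=0$). Both directions are exactly the definitional unpacking you give, and the closing remark about the Morse/connectedness conventions being shared is a reasonable, if optional, note.
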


A cobordism can be modified by adding a pair of critical points of indices 0 and 1, or of indices 1 and 2, without altering the genus. Thus we have the next result.

\begin{proposition}\label{prop:shift1} For a pair of knots $K_1$ and $K_0$, if  $(c_0, c_2) \in \calg_g(K_1,K_0) $, then
$(  c_0+i , c_2+j ) \in \calg_g(K_1,K_0) $ for all $i, j \ge 0$.

\end{proposition}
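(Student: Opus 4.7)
The plan is to establish the two base cases $(c_0+1, c_2), (c_0, c_2+1) \in \calg_g(K_1,K_0)$ via a standard local Morse-theoretic modification, then iterate in $i$ and $j$ separately.

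Given a Morse cobordism $\Sigma \subset S^3 \times [0,1]$ realizing $(c_0, c_2)$, I would choose a small open ball $B \subset S^3 \times [0,1]$ that meets $\Sigma$ in a flat disk $D$ transverse to the projection to $[0,1]$ and disjoint from the critical set. Inside $B$ I would perform a birth move: replace $D$ by a surface isotopic to $D$ rel $\partial D$ that dips down into a small pocket, so that the projection acquires exactly one new index-$0$ critical point (at the bottom of the pocket) together with one new canceling index-$1$ critical point (at the saddle where the pocket meets the ambient surface). The resulting cobordism $\Sigma'$ is smoothly isotopic to $\Sigma$ rel boundary, hence has the same genus and the same boundary knots, while $(c_0(\Sigma'), c_2(\Sigma')) = (c_0 + 1, c_2)$. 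A mirror-image construction that pushes a pocket upward creates a canceling index-$1$/index-$2$ pair and gives $(c_0, c_2 + 1) \in \calg_g(K_1, K_0)$. Iterating these two moves independently produces $(c_0+i, c_2+j)$ for every $i, j \ge 0$.

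The only step requiring any care is verifying that the local birth really does produce a Morse function on $\Sigma'$ with exactly two new critical points of the claimed indices, with all other critical points undisturbed; this is the standard birth/death unfolding from Cerf theory, and is precisely the inverse of the handle-cancellation move that one ordinarily uses to \emph{reduce} critical-point counts. There is therefore no genuine obstacle, which is why the author's one-sentence remark preceding the statement is already essentially a proof.
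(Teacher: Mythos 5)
Your proposal is correct and is essentially the same argument the paper gives: the paper's one-line justification is exactly the observation that one can add a canceling index-$0$/index-$1$ (or index-$1$/index-$2$) pair to a Morse cobordism without changing its genus or boundary, and then iterate. You have simply unpacked the local birth move in more detail.
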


It follows that each $\calg_g(K_1,K_0)$ is a finite  union of quadrants, $\bigcup_\alpha \calq(a_\alpha, b_\alpha)$, where  
\[ \calq(a ,b  ) \defeq \{ (i,j) \ \big| \  i \ge a   \text{\ and \ }  j \ge b  \ \}.\] 
Figure~\ref{fig:graphs1}  illustrates the union of quadrants $\calq(2, 3) \bigcup \calq(5,1)$.\begin{figure}[h]
\labellist
\endlabellist
\includegraphics[scale=.16]{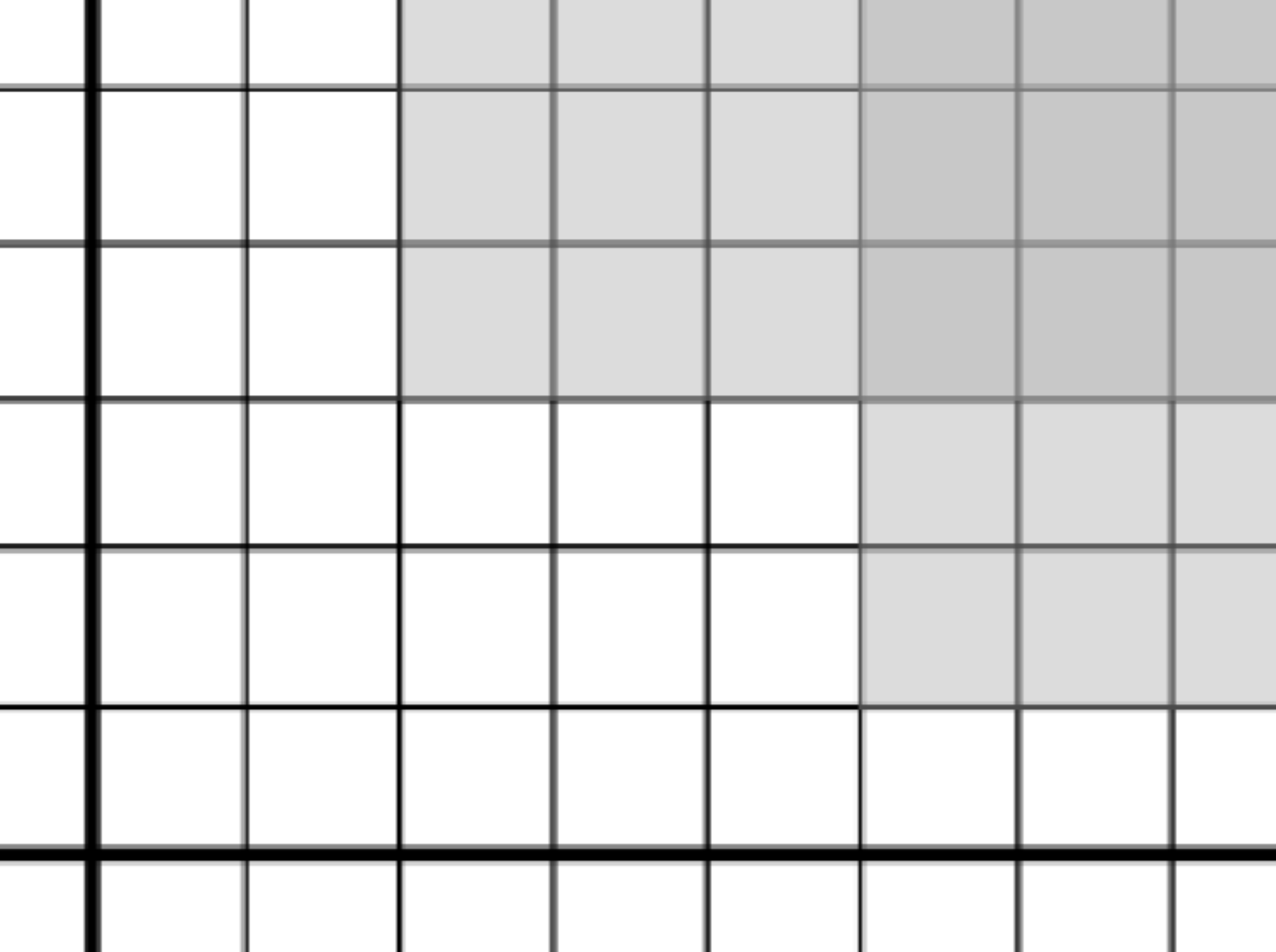} 
\caption{Graph of $\calq(2,3) \bigcup \calq(5,1)$.}
\label{fig:graphs1}
\end{figure}

If for some pair of knots $K_1$ and $K_0$ and $g\ge0$, the graphic in Figure~\ref{fig:graphs1} represents $\calg_g(K_1,K_0)$, then the fact that there are no point on either axis implies that there does not exist a  genus $g$ ribbon cobordism from $K_1$ to $K_0$ or from $K_0$ to $K_1$.

Next, we observe the most basic ways in which points in $\calg_g(K_1, K_0)$ determine points in  $\calg_{g+1}(K_1, K_0)$

\begin{proposition}\label{prop:shift2} For a pair of knots $K_1$ and $K_0$, suppose that $(c_0, c_2) \in \calg_g(K_1,K_0) $.   
\begin{enumerate}

\item  If $c_0 >0$, then $ ( c_0- 1 , c_2  ) \in \calg_{g+1}(K_1,K_0) $.

\item  If $c_2 >0$, then $ ( c_0  , c_2  - 1) \in \calg_{g+1}(K_1,K_0) $.

\end{enumerate}

\end{proposition}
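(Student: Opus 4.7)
The plan is to modify a given cobordism realizing $(c_0, c_2)$ by trading one index-$0$ (resp.\ index-$2$) critical point for an additional index-$1$ critical point at the cost of one extra unit of genus. I will carry this out in the Morse/handle picture.

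For part (1), let $\Sigma$ be a cobordism from $K_1$ to $K_0$ realizing $(c_0, c_2)$, and fix an index-$0$ critical point $p$ of $\Sigma$ at height $t_0 \in (0,1)$. Choose $\delta > 0$ so small that $(t_0 - \delta, t_0 + \delta)$ contains no critical value other than $t_0$. Since $\Sigma$ is connected and meets each of $S^3 \times \{0\}$ and $S^3 \times \{1\}$ nontrivially, the level set $\Sigma \cap (S^3 \times \{t_0-\delta\})$ is a nonempty union of circles; pick any such circle $C'$. The key step is to insert, at height $t_0 - \delta/2$, a small saddle (band move) on $C'$ that splits off from it a small trivial circle $\widetilde C$, placed near the location of the index-$0$ critical point at $p$ and arranged so that $\widetilde C$ coincides (up to isotopy in the $t_0$-level $S^3$) with the small circle that was being born at $p$ in the original $\Sigma$. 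One then deletes the critical point at $p$: the circle $\widetilde C$ is already present in the level set from height $t_0 - \delta/2$ onward, so the old cap below $p$ is no longer needed, and the portion of the cobordism above $t_0$ can be left unchanged.

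The resulting surface $\Sigma'$ has the same boundary knots $K_1$ and $K_0$, one fewer local minimum, the same number of local maxima, and exactly one new saddle point. Using the identity $c_1 = c_0 + c_2 + 2g$ recorded earlier in the excerpt, this forces $g(\Sigma') = g+1$, proving (1). Part (2) is proved by the exact dual construction: choose an index-$2$ critical point $q$ at height $t_0'$, use connectedness to obtain a nonempty level-set circle $C''$ at a height slightly above $t_0'$, and replace the 2-handle at $q$ by a saddle move that fuses into $C''$ the small circle that $q$ was about to cap.

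The only nontrivial point is the existence of the auxiliary level-set circle $C'$ (resp.\ $C''$), and this is forced by the connectedness of $\Sigma$: were a regular level set to be empty, $\Sigma$ would decompose as a disjoint union of its sub-cobordism below and its sub-cobordism above that height, each of which is nonempty since it meets $K_1$ or $K_0$ respectively. Once this observation is in place, what remains is a routine local handle manipulation, with no further obstacle.
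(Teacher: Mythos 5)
Your proof is correct and follows essentially the same approach as the paper: trade the birth (resp.\ death) of the small trivial circle for a band move from a nearby level-set circle, raising the genus by one. You add two welcome details the paper leaves implicit—that connectedness of $\Sigma$ guarantees a nonempty regular level set to band to, and that the genus increase follows from the Euler characteristic identity $c_1 = c_0 + c_2 + 2g$.
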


\begin{proof}  In terms of cross-sections of the cobordism, an index 0 critical point at height $t$ corresponds to the addition of an unknotted, unlinked component to the cross-section of $\Sigma$ as the height increases past $t$. The same addition can be realized by performing a trivial band move to the cross-section at height just below $t$.  This corresponds to adding a   critical point of index 1  in exchange for eliminating the  index 0 critical point.  It increases the genus by 1.  A similar construction eliminates index 2 critical points. 
\end{proof}

\begin{example}  Figure~\ref{fig:wedge2} illustrates how  a  point in $\calg_g$ generates points in $\calg_{g+1}$. In this example, the point $(4,2) \in \calg_0$.  Using Propostion~\ref{prop:shift2} we see that $\{ (3,2), (4,1)\} \subset   \calg_1$.      This in turn implies that  $\{ (2,2), (3,1), (4,0)\} \subset   \calg_2$.  It next follows that $\{ (1,2), (2,1), (3,0)\} \subset   \calg_3$.  As a consequence, we have  $\{ (0,2), (1,1), (2,0)\} \subset   \calg_4$ and then that  $\{ (0,1), (1,0)\} \subset   \calg_5$.   Finally, $(0,0) \in \calg_g$ for all $g \ge 6$.  

In this example, if the first figure represents $\calg_0 $ for some pair of knots, we are not asserting the remaining diagrams illustrate the $\calg_g$, but only that they represent subsets of the $\calg_g$.   Example~\ref{ex:genus-grow} in Section~\ref{sec:homconstrain} we will show that $\calg(K_1,K_0)$ can be strictly larger than the set guaranteed by Proposition~\ref{prop:shift2}.

\begin{figure}[h]
\labellist
 \pinlabel {$g=0$} at 390 350  
 \pinlabel {$g=1$} at 780 350
   \pinlabel {$g=2$} at 1160 350
    \pinlabel {$g=3$} at 180 000
     \pinlabel {$g=4$} at 570 000
      \pinlabel {$g=5$} at 960 000
       \pinlabel {$g\ge 6$} at 1350 000
\endlabellist
\includegraphics[scale=.25]{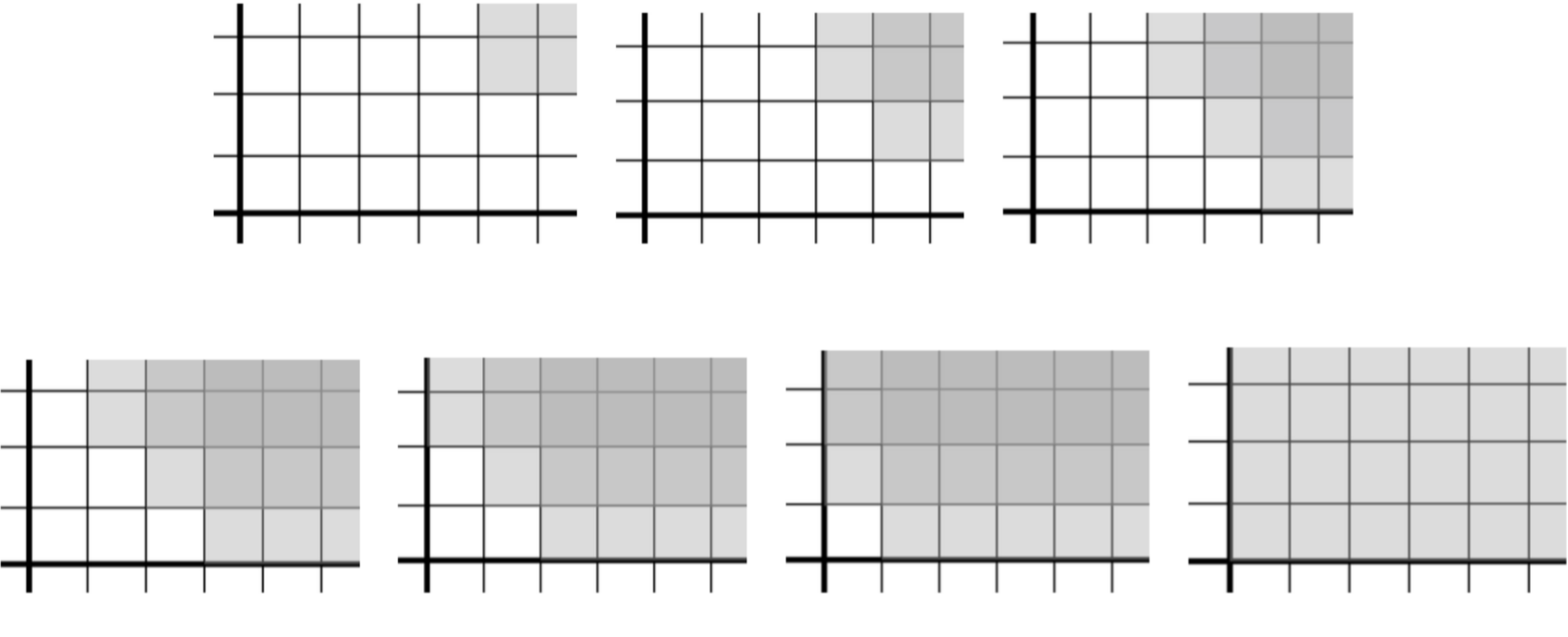} 
\caption{Possible sets $\calg_g(K_1,K_2)$.}
\label{fig:wedge2}
\end{figure}

\end{example}

\subsection{The  set of $\calg(K_1,K_2)$ and the associated sequence.}
It is apparent that each $\calg_g$ is determined by a unique finite set of points and that for large $g$, $\calg_g$ consists of the entire quadrant.  This is summarized in the following theorem. 

\begin{theorem}  Each  set  $ \calg(K_1, K_0)$ is   determined by a finite  sequence 
\[ \cals(K_1, K_0) = \big(  (g_1, a_1, b_1),  (g_2, a_2, b_2), (g_3, a_3, b_3), (g_4, a_4, b_4), \ldots,  (g_k, 0, 0)  \big)
\]
of elements in $(\Z_{\ge 0})^3$ which is lexicographically  ordered.    There is a unique minimal length such sequence.
\end{theorem}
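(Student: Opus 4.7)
The plan is to view $\calg(K_1, K_0)$ as a subset of $(\Z_{\ge 0})^3$ under the coordinates $(g, c_0, c_2)$ (the value of $c_1$ being determined by the others), show that it is upward-closed under the product partial order, and then invoke Dickson's lemma to produce a unique finite set of minimal elements, which after lexicographic sorting yields the sequence.

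First I would verify upward-closure. Proposition~\ref{prop:shift1} handles increases in the $c_0$ and $c_2$ directions at fixed $g$. For increases in $g$, one combines the two propositions: if $(c_0, c_2) \in \calg_g$, then Proposition~\ref{prop:shift1} gives $(c_0+1, c_2) \in \calg_g$, and since $c_0+1 > 0$, Proposition~\ref{prop:shift2}(1) then gives $(c_0, c_2) \in \calg_{g+1}$. Thus $\calg_g \subseteq \calg_{g+1}$ for every $g$, and $\calg \subset (\Z_{\ge 0})^3$ is upward-closed. Next I would show that $\calg_g$ is eventually the full quadrant $\calq(0, 0)$: any cobordism from $K_1$ to $K_0$ (one certainly exists, since any two knots are cobordant, e.g. by concatenating Seifert surfaces through the unknot) produces some $(g_0, c_0^\ast, c_2^\ast) \in \calg$, and $c_0^\ast + c_2^\ast$ iterated applications of Proposition~\ref{prop:shift2} yield $(0, 0) \in \calg_{g_1}$ with $g_1 = g_0 + c_0^\ast + c_2^\ast$. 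By Proposition~\ref{prop:shift1}, $\calg_{g_1} = \calq(0, 0)$, so I can let $g_k$ denote the smallest such $g$.

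Dickson's lemma, that is, the fact that $(\Z_{\ge 0})^3$ is a well partial order under the product order, then guarantees that the set of minimal elements of the upward-closed set $\calg$ is finite and uniquely determines $\calg$. The triple $(g_k, 0, 0)$ is itself minimal, and any other minimal triple $(g, a, b)$ must satisfy $g < g_k$: if $g > g_k$, or if $g = g_k$ with $(a, b) \ne (0, 0)$, then $(g, a, b)$ strictly dominates $(g_k, 0, 0)$ in the product order, contradicting minimality. Hence $(g_k, 0, 0)$ is the lex-maximum and appears last when the finite minimal set is listed in lex order. Uniqueness of the minimum-length sequence is then immediate: such a sequence must enumerate the (unique) finite set of minimal elements, and lex order makes that enumeration unique. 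The only mildly subtle step in the whole argument is the monotonicity $\calg_g \subseteq \calg_{g+1}$, which requires combining both propositions rather than using either in isolation; everything else is routine bookkeeping atop the well-partial-order property of $(\Z_{\ge 0})^3$.
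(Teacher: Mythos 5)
The paper does not actually prove this theorem; it is stated after the remark ``It is apparent that each $\calg_g$ is determined by a unique finite set of points and that for large $g$, $\calg_g$ consists of the entire quadrant,'' with no argument supplied. Your proof supplies exactly the justification the paper leaves implicit, and it is correct. The key structural observation — that $\calg(K_1,K_0)$ is upward-closed in $(\Z_{\ge 0})^3$ under the product order — is what makes Dickson's lemma applicable, and you correctly identify that the monotonicity $\calg_g \subseteq \calg_{g+1}$ is not a direct consequence of either Proposition~\ref{prop:shift1} or Proposition~\ref{prop:shift2} alone but of composing them: push $c_0$ up by one to make it positive, then trade that $0$--handle for genus. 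Your treatment of the terminal element is also right: $(g_k,0,0)$ is itself a minimal element because $(0,0)\notin\calg_{g_k-1}$ by minimality of $g_k$, and since nothing in $(\Z_{\ge 0})^3$ lies strictly below a triple of the form $(g,0,0)$ except triples $(g',0,0)$ with $g'<g$, it is the lexicographic maximum of the minimal antichain. Finally, your uniqueness argument is the standard one: any lex-ordered sequence whose upward closures cover $\calg$ must include every minimal element (a minimal element cannot lie in the upward closure of any other element of $\calg$), and any non-minimal element is redundant, so the minimal-length sequence is precisely the minimal antichain listed in lex order. This is a clean and complete proof of a statement the paper treats as self-evident.
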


As an example, some of the terms of the lexigraphically ordered sequence corresponding to the regions in Figure~\ref{fig:wedge2} are 
\[
\big(  (0,4,2),(1,3,2),(1,4,1),(2,2,2),(2,3,1),(2,4,0),   \ldots, (5,0,1), (5,1,0), (6,0,0)   \big).
\]
A general problem that seems to be  beyond   currently available techniques is to determine if there are any constraints on the sequences  that can arise from a pair of knots  other than those that are a consequence of Propositions~\ref{prop:shift1} and~\ref{prop:shift2}.     For instance, the ribbon conjecture  can be stated as the following:  if $(0,c_0,c_2) \in \cals(K, U)$ for some $c_0$ and $c_2$, then   $(0, c_0', 0) \in  \cals(K, U)$ for some $c_0'$.  The generalized ribbon conjecture states that if $ (g,c_0,c_1) \in \cals(K, U)$ for some $g, c_0$ and $c_2$, then $ (g,c_0',0) \in \cals(K, U)$ for some $c_0'$.  See Section~\ref{sec:problems} for a further discussion.  

\subsection{The case of $K_0 $ is unknotted.}
Understanding $\calg_g(K, U)$ is   equivalent to analyzing surfaces bounded by $K$ in $B^4$.   
Given a knot $K \subset S^3$, we let $\Sigma \subset B^4$ with $\partial \Sigma = K$.  We will assume the radial function is Morse on $\Sigma$; hence, we can define the count of critical points as before.

\begin{definition}  For knot a knot $K$, set 
\[
\calb_g(K ) = \{ (  c_0(\Sigma),  c_2(\Sigma))\ \big| \   \Sigma \subset B^4, \   \partial \Sigma = K,      \text{\  and\ } g(\Sigma) = g\}.
\]
\end{definition}
The following is clear. 
\begin{proposition} For any knot $K$,  $(c_0, c_2) \in \calb_g(K) $ if and only if $(c_0 -1, c_2) \in \calc_g(K, U)$.
\end{proposition}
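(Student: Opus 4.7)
The plan is to set up an explicit bijection between surfaces in $B^4$ bounded by $K$ and cobordisms from $K$ to $U$ in $S^3 \times [0,1]$, under which exactly one index $0$ critical point is created or destroyed while $g$ and $c_2$ are preserved. The underlying picture is that $B^4$ is $S^3 \times [0,1]$ with a $4$-ball glued on along $S^3 \times \{0\}$, and the extra cap needed to close the unknot $U$ contributes precisely one local minimum.

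For the direction $(c_0-1,c_2) \in \calg_g(K,U) \Rightarrow (c_0,c_2) \in \calb_g(K)$, I would take a cobordism $\Sigma_0 \subset S^3\times[0,1]$ from $K$ at $t=1$ to $U$ at $t=0$ realizing $(c_0-1, c_2)$, and glue a collared $4$-ball $B^4$ onto $S^3 \times \{0\}$. The resulting manifold is diffeomorphic to $B^4$, and inside it I would add a standard embedded disk $D$ for the unknot $U$ using a Morse function on the capping $B^4$ with a single interior minimum. Taking $\Sigma := \Sigma_0 \cup D$ and extending the height function on $S^3\times[0,1]$ smoothly to a Morse function on $B^4$ whose only new critical point is the minimum of $D$ produces a surface of genus $g$ bounded by $K$ with $(c_0, c_2)$ critical points of index $0$ and $2$.

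For the converse, start with $\Sigma \subset B^4$ with $\partial \Sigma = K$, genus $g$, and $(c_0, c_2)$ critical points for a Morse radial-type function $f\co B^4 \to [0,1]$. Since $K \subset \partial B^4$ lies at the maximal level of $f$, the restriction $f|_\Sigma$ must attain its minimum at some interior index $0$ critical point $p$, so $c_0 \ge 1$; this justifies the shift by $1$. I would then remove a small round $4$-ball neighborhood $B^4(p,\delta)$ chosen so that $\Sigma\cap B^4(p,\delta)$ is a flat disk. The complement $B^4 \setminus \mathrm{int}\,B^4(p,\delta)$ is diffeomorphic to $S^3 \times [0,1]$, with $\partial B^4$ corresponding to $S^3 \times \{1\}$ and $\partial B^4(p,\delta)$ to $S^3\times\{0\}$; under this diffeomorphism $\Sigma \cap (B^4 \setminus \mathrm{int}\,B^4(p,\delta))$ becomes a cobordism from $K$ to an unknotted circle on $\partial B^4(p,\delta)$.

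The main technical point — and the only real obstacle — is ensuring the induced Morse data on the cobordism has exactly $(c_0-1,c_2)$ critical points. I would handle this by choosing the diffeomorphism $B^4 \setminus \mathrm{int}\,B^4(p,\delta) \cong S^3\times[0,1]$ so that the original Morse function $f$ on $B^4$ transforms into a function that differs from the height projection to $[0,1]$ only by a Morse-preserving rescaling. Standard Cerf-type arguments on rearranging Morse functions (or equivalently, straightening handles) show that we may assume the projection is itself Morse with precisely the original critical points of $f|_\Sigma$ other than $p$, and with indices unchanged. This completes the correspondence in both directions and proves the stated equivalence.
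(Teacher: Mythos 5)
The paper treats this proposition as immediate and gives no proof, so there is no argument in the source to compare against; your write-up supplies a proof and the core idea is correct. The forward direction (glue a standard disk cap for $U$ in the attached $4$--ball) is exactly right. In the converse direction, however, excising a round metric ball $B^4(p,\delta)$ around the minimum and then invoking a Cerf-type rearrangement to make the projection Morse with the right critical data is more machinery than the situation demands, and the appeal to ``straightening handles'' is a little vague as stated: an arbitrary identification $B^4\setminus \operatorname{int}B^4(p,\delta)\cong S^3\times[0,1]$ need not carry $r$ to anything resembling the height function, and you would have to build the diffeomorphism with real care.

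A cleaner route that avoids Cerf theory entirely is to use a sublevel set rather than a metric ball. After a generic $C^\infty$-small perturbation of the radial Morse function $r$ (which does not change $g$, $c_0$, or $c_2$), the critical points of $r|_\Sigma$ occur at distinct levels; since $\Sigma$ is compact, connected, and has its boundary on $\partial B^4 = r^{-1}(1)$, the lowest such level is a unique interior index--$0$ critical point $p$ at height $t_0$. For $\epsilon$ slightly greater than $t_0$, the region $r^{-1}([\epsilon,1])$ is precisely a collar $S^3\times[\epsilon,1]$ on which $r$ is already the height function up to an affine reparametrization, and $\Sigma\cap r^{-1}([\epsilon,1])$ is a genus--$g$ cobordism from $K$ to the small unknotted circle $\Sigma\cap r^{-1}(\epsilon)$, with exactly $c_0-1$ minima and $c_2$ maxima. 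This gives the converse directly, with no auxiliary diffeomorphism to control. It is also worth saying explicitly that connectedness of $\Sigma$ is what guarantees both $c_0\ge 1$ and that the bottom level set consists of a single unknotted circle.
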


The sets $\calg_g(K_1, K_0)$ and $\calb_g(K_1 \cs -K_0)$ are   related, but note that in considering $\calg_g( K_1 \cs -K_0)$ we have lost the asymmetry of the general problem.   Let $\text{b}(K)$  denote the minimum number of index 0 critical points in a ribbon disk for $K\cs -K$.  This invariant is related to classical three-dimensional knot invariants.  For instance, let $\text{br}(K)$ denote the bridge index of $K$.  A ribbon disk for $K \cs -  K$ with $c_0  = \text{br}(K)$ and $c_1  = \text{br}(K) - 1$ is easily constructed;  thus  $\text{b}(K) \le  \text{br}(K)$.  Results concerning the interplay between these invariants appears in~\cite[Section 1] {MR4186142}. See also Problem~\ref{prob1} in Section~\ref{sec:problems}.

Given a cobordism from $K_1$ to $K_0$, we can start with the ribbon surface for $K_0 \cs -K_0$ to build a slicing surface for $K_1 \cs -K_0$: use the cobordism to change  $K_1 \cs -K_0 $ into $ K_0 \cs -K_0$ and then attach  a   slice disk.  This leads to the next result.

\begin{theorem}  If $(  c_0,   c_2) \in \calg_g(K_1, K_0)$, then $(  c_0 +\text{\upshape{b}}(K_0) ,   c_2) \in \calb_g(K_1\cs -K_0)
$
\end{theorem}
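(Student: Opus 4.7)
The plan is to produce the desired surface in $B^4$ by stacking a cobordism from $K_1 \cs -K_0$ to $K_0 \cs -K_0$ on top of a minimal ribbon disk for $K_0 \cs -K_0$, and to track critical points carefully so that the genus remains $g$ and no index $2$ critical points are added.

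First, I would build a cobordism $\Sigma' \subset S^3 \times [0,1]$ from $K_1 \cs -K_0$ to $K_0 \cs -K_0$ from the given $\Sigma$. To do this, choose a ball $B \subset S^3$ in which $\Sigma$ is supported (so the projection $\Sigma \to [0,1]$ is Morse with the prescribed critical point counts, and the cobordism is a product outside $B \times [0,1]$), and choose a small disjoint ball $B' \subset S^3$ in which to place a product cylinder $(-K_0) \times [0,1]$. Join the two pieces by a thin rectangular strip $\gamma \times [0,1]$, where $\gamma$ is an arc in $S^3$ running from the boundary of $\Sigma \cap (S^3 \times \{t\})$ inside $B$ to the boundary of $-K_0$ inside $B'$, chosen so that the strip is transverse to the level sets and contains no critical points of the projection. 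An Euler characteristic count shows $\chi(\Sigma') = -2g$, so $g(\Sigma') = g$; by construction $c_0(\Sigma') = c_0$ and $c_2(\Sigma') = c_2$, with the top and bottom boundaries exactly $K_1 \cs -K_0$ and $K_0 \cs -K_0$ respectively.

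Next, choose a ribbon disk $R \subset B^4$ for $K_0 \cs -K_0$ realizing $\text{b}(K_0)$: it has $c_0(R) = \text{b}(K_0)$ index $0$ critical points, $c_2(R) = 0$, and (by $\chi(R) = 1$) exactly $\text{b}(K_0)-1$ saddles. Stack the two pieces: identify $S^3 \times \{0\}$ with $\partial B^4$ so that $(S^3 \times [0,1]) \cup B^4$ is again diffeomorphic to $B^4$, and set $F = \Sigma' \cup R$. Then $\partial F = K_1 \cs -K_0$ and $g(F) = g$, and with respect to a Morse function on this $B^4$ that restricts to the height function on the collar and to the radial function on the inner ball, the critical points of $F$ are the union of those of $\Sigma'$ and those of $R$. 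This gives $c_0(F) = c_0 + \text{b}(K_0)$ and $c_2(F) = c_2$, so $(c_0 + \text{b}(K_0), c_2) \in \calb_g(K_1 \cs -K_0)$.

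The main subtlety, which I would verify carefully, is the joining step: one must place the product cylinder and the connecting strip so that the strip introduces no new critical points of the projection and so that the resulting surface is smooth (rather than having a ridge along the gluing). A standard rounding argument handles the latter, and choosing $\gamma$ to be horizontal at each level takes care of the former. A secondary point worth noting is compatibility of Morse functions across the $S^3 \times [0,1]$–to–$B^4$ interface; this is addressed by smoothing the composite function near $S^3 \times \{0\}$ so that the critical points of the two pieces remain the only critical points of the composite.
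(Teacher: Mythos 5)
Your proof is correct and follows exactly the approach the paper sketches in the sentence preceding the theorem: form the connected-sum cobordism from $K_1 \cs -K_0$ to $K_0 \cs -K_0$ by running $\Sigma$ alongside a product cylinder on $-K_0$, then cap off below with a ribbon disk for $K_0 \cs -K_0$ realizing $\text{b}(K_0)$. You have simply filled in the Euler-characteristic and critical-point bookkeeping that the paper leaves implicit, and your handling of the gluing and Morse-function compatibility is the standard argument.
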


In the reverse direction, given a surface bounded by $K_1 \cs -K_0$, we can build a cobordism from $K_1 $ to $K_0$: build a cobordism from $K_1$ to $K_1 \cs -K_0 \cs K_0$ and then cap it  off with the surface bounded by $K_1 \cs -K_0$.  This yields the following.

\begin{theorem}  If $(  c_0, c_2) \in \calb_g(K_1\cs -  K_0)$, then $\left(  c_0  -1      , c_2 +\text{\upshape{b}}(K_0)\right) \in \calg_g(K_1,  K_0)$.
\end{theorem}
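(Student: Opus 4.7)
The plan is to execute the two-step construction outlined just before the statement. First, I convert the hypothesis via the preceding proposition: from $\Sigma \in \calb_g(K_1 \cs -K_0)$ with $(c_0,c_2)$ radial critical points, I obtain a cobordism $\Sigma' \in \calg_g(K_1 \cs -K_0, U)$ with $(c_0-1,c_2)$ height-critical points. I also pick a ribbon disk $D$ for $-K_0 \cs K_0$ realizing $\text{b}(K_0)$ index-$0$ radial critical points.

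The target cobordism $F\co K_1 \to K_0$ is built in two pieces stacked in the $t$-direction. The upper piece, from $K_1$ at $t=1$ down to $K_1 \cs -K_0 \cs K_0$ at $t=1/2$, is obtained by embedding the ball containing $D$ ``upside down'' in $S^3 \times [1/2,1]$ so that the $\text{b}(K_0)$ radial minima of $D$ become local maxima of $t$: as $t$ decreases, the knot $-K_0 \cs K_0$ emerges from the empty cross-section through $\text{b}(K_0)$ index-$2$ critical points and $\text{b}(K_0)-1$ saddles, and a single band-move saddle then joins $K_1$ to it to produce $K_1 \cs -K_0 \cs K_0$. This piece has genus $0$.

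The lower piece, from $K_1 \cs -K_0 \cs K_0$ at $t=1/2$ down to $K_0$ at $t=0$, caps off the $K_1 \cs -K_0$ summand using $\Sigma'$. Decompose $S^3 = B_1 \cup_\partial B_0$ so that the $K_1 \cs -K_0$ summand sits as an arc in $B_1$ (closed to the knot $K_1 \cs -K_0$ by the trivial arc on $\partial B_1$) and the $K_0$ summand sits as an arc in $B_0$. Embed $\Sigma'$ inside $B_1 \times [0,1/2] \cong B^4$, with its $K_1 \cs -K_0$ boundary at $t=1/2$ matching the top summand-arc and its $U$ boundary at $t=0$ being the corresponding trivial arc; meanwhile, let $K_0$ extend trivially in $B_0 \times [0,1/2]$. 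At $t=0$, the cross-section is the connect sum $U \cs K_0 = K_0$. This piece contributes exactly the $(c_0-1,c_2)$ height-critical points of $\Sigma'$ and has genus $g$. Stacking the two pieces gives the total count $(c_0-1,\, c_2+\text{b}(K_0))$, and an Euler characteristic computation confirms genus $g$.

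The main obstacle is justifying the ``apply $\Sigma'$ to a connect-sum summand'' step in the lower piece. The point is that in the decomposition $K_1 \cs -K_0 \cs K_0 = (K_1 \cs -K_0) \cs K_0$, the $K_1 \cs -K_0$-arc in $B_1$, closed by the complementary trivial arc on $\partial B_1$, is literally the closed knot $K_1 \cs -K_0$, so $\Sigma'$ fits inside $B_1 \times I \cong B^4$ with the correct boundary. Without this observation, the naive strategy --- first use a separating saddle to split $K_1 \cs -K_0 \cs K_0$ into $(K_1 \cs -K_0) \sqcup K_0$, then cap the $K_1 \cs -K_0$ component by $\Sigma$ --- would introduce an extra index-$0$ critical point and miss the stated bound by one.
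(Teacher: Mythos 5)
Your two-piece construction matches the paper's one-sentence sketch, and you correctly identify the key subtlety: a naive "pinch off $K_1 \cs -K_0$ and cap it with $\Sigma$" gives $c_0$ minima rather than $c_0 - 1$. But the justification of the lower piece has a genuine gap. You assert that "$\Sigma'$ fits inside $B_1 \times I \cong B^4$ with the correct boundary," yet $\Sigma'$ is a cobordism between two \emph{closed} circles living in all of $S^3 \times [0,1/2]$. To place it in $B_1 \times [0,1/2]$ as a \emph{relative} cobordism (arc boundaries at $t=0,\,t=1/2$, two vertical arcs on $\partial B_1 \times [0,1/2]$) without changing the critical-point count, you must first exhibit a vertical rectangle in $\Sigma'$ --- an embedded product strip, $t$-monotone, joining the top circle to the bottom circle --- along which to split it. Such a strip does not exist for every connected Morse cobordism: one can build a genus-$0$ cobordism between single circles whose Reeb graph is a tree on which the unique top-to-bottom path is not height-monotone, so no $t$-monotone arc from $\partial_1$ to $\partial_0$ exists. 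Merely observing that the boundary arcs close up to the right knots does not address this.

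The repair uses the freedom in how $\Sigma'$ is produced from $\Sigma \subset B^4$. Inside $\Sigma$ there is always a radially monotone arc $\gamma$ from $\partial\Sigma = K_1 \cs -K_0$ down to some index-$0$ critical point $m_*$: walk down the Reeb graph of $\Sigma$ from the boundary vertex, always choosing a descending edge; the walk strictly decreases height, hence terminates at a leaf, and a leaf reached by a descending walk must be a minimum. Now drill out not an arbitrary cap but a neighborhood of $\gamma \cup \{m_*\}$: this simultaneously removes one minimum and cuts $\Sigma$ open along a vertical strip, yielding exactly the relative surface $\Sigma^{\mathrm{rel}} \subset B_1 \times [0,1/2]$ with $(c_0-1,c_2)$ height-critical points. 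With this choice of $m_*$ and cutting strip made explicit, the rest of your argument --- the upper piece built from the flipped ribbon disk for $K_0 \cs -K_0$, and the critical-point and genus bookkeeping --- is correct and completes the proof.
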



\section{\bf Covering spaces  and equivariant knot theory}\label{sec:equivariant}

In this section, we set up the notation for covering spaces and the general theory of the associated equivariant homology theory.  We then consider a technical issue that arises from the following situation.  A homomorphism  $\rho \co \pi_1(X) \to \Z_m$ determines a homomorphism   $\rho \co \pi_1(X) \to \Z_{km}$ for any $k$ via inclusion; we will need to understand relationships between the equivariant homology groups of the associated $m$--fold and $km$--fold cyclic covers. 

\subsection{Cyclic covers of knots}  

Let $K\subset S^3$ be a knot and let $\Sigma \subset S^3 \times [0,1]$ be a cobordism between knots.

\begin{definition}$\ $
\begin{itemize}
\item $M_n(K)$ will denote  $n$--fold cyclic  cover of $S^3$ branched over  $K$.
\item   $\wK$ denotes the preimage of $K$ in $M_n(K)$.  
\item  $W_n(\Sigma)$  and  $\wSig$ denote the $n$--fold cyclic  cover of  $S^3 \times [0,1]$ branched over $\Sigma$ and the preimage of $\Sigma$. 
\item  $M_\infty(K)$ and $W_\infty(\Sigma)$ will  denote the infinite cyclic covers of $S^3 \setminus K$ and   $\big(S^3 \times [0,1]\big) \setminus \Sigma$.
\end{itemize}
\end{definition}

\subsection{Covering space theory.}   For any group $\Gamma$, let $K_\Gamma$  denote an Eilenberg-MacLane space for $\Gamma$ and let $E_\Gamma$ denote its universal cover.  All spaces $X$ considered here will be  connected manifolds and covering spaces will be  abelian, so we need not discuss details about the underlying point set topology and basepoint issues.  

If $X$ is connected and $\rho \co \pi_1(X)  \to \Gamma$ is a homomorphism, then it induces a map $X \to K_\Gamma$.  The pullback of $E_\Gamma \to K_\Gamma$ to $X$ is a covering space $\widetilde{X}_\rho$.  Points in the preimage of a basepoint in $\widetilde{X}_\rho$ correspond to elements of $\Gamma$ and components of  $\widetilde{X}_\rho$ corresponds to cosets of $\rho(\pi_1(X))\subset \Gamma$.

\subsection{Equivariant homology and Betti numbers}

For any CW--complex  $X$, suppose that $T$ is an order $m$ homeomorphism that preserves the CW--structure.  Let $\F$ be a separable field, for instance $\Q$, $\R$, or a finite field $\F_p$.  Let $\F$ have   algebraic closure $\obF$.  The homology group $H_i( X, \obF)$ splits into $m$ eigenspaces, $H_i^\zeta(X, \obF)$ for the $m$ distinct $m$--roots of unity $\zeta \in \obF$.  These eigenspaces are isomorphic to the homology groups associated to an  eigenspace splitting of the CW--chain complex.   We define $\beta_i^\zeta(X, T,     \F)$ to be the dimension of $H_i^\zeta(X, \obF)$.  If $\zeta_1$ and $\zeta_2$ are Galois conjugate, then  $\beta_i^{\zeta_1}(X, T,     \F) =  \beta_i^{\zeta_2}(X, T,     \F)$.

If  $\rho \co \pi_1(Y) \to \Z_m$ is a homomorphism,  then there is an induced $m$--fold covering space $\widetilde{Y}_\rho$ with canonical deck transformation $T_\rho$.  We will sometimes  highlight the role of $\rho$ in our notation by  writing 
$\beta_i^\zeta(\wY_\rho, \rho,  \F)$ for
$\beta_i^\zeta(\widetilde{Y}_\rho,T_\rho , \F)$.  In our applications, the 
space $Y$ will be either $M_n(K)$   or $W_n(\Sigma)$.  The corresponding covers $\widetilde{Y}_\rho$ are called {\it metacyclic} branched  covers of $K$. In the case that $n=2$, the cover $\wM_2(K)$ is what is called a regular $2m$--fold {\it  dihedral cover} of $S^3$ branched over  $K$.

\subsection{Relations between equivariant Betti numbers}
For a connected manifold $X$, suppose that $\rho\co \pi_1(X) \to \Z_m$ is a homomorphism.  Let $\rho' \co \pi_1(X) \to \Z_{km}$ be induced by the inclusion $\Z_m \subset \Z_{km}$.

\begin{theorem}\label{thm:disjoint} With the conditions given above, the induced $km$--fold cover of $X$  is the disjoint union of $k$ copies of the $m$--fold cover of $X$:
\[ \widetilde{X}_{\rho'} \cong  \widetilde{X}_\rho \sqcup  \widetilde{X}_\rho \sqcup \cdots  \sqcup \widetilde{X}_\rho.
\]
The order $km$ deck transformation shifts each summand to the next.  The last summand is mapped to the first via the order $m$ deck transformation of $\widetilde{X}_\rho$.
\end{theorem}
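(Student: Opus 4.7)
The plan is to handle the statement in two stages: first verify the topological description of $\widetilde{X}_{\rho'}$ as a disjoint union of $k$ copies of $\widetilde{X}_\rho$, then read off how the canonical deck transformation acts.

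For the topological description, I would invoke the general covering-space dictionary: for any homomorphism $\phi\co \pi_1(X)\to G$, the pullback cover $\widetilde{X}_\phi$ has components in bijection with $G/\operatorname{Im}(\phi)$, and each component is the regular cover of $X$ corresponding to $\ker\phi$ with deck group $\operatorname{Im}(\phi)$. Applying this to $\phi = \rho'$, the inclusion $\Z_m \hookrightarrow \Z_{km}$ is the unique injection, namely the map $1\mapsto k$; so $\rho'(\gamma) = k\cdot \rho(\gamma)$, giving $\operatorname{Im}(\rho') = k\Z_{km}$, a subgroup of index $k$, and $\ker(\rho') = \ker(\rho)$. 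Thus $\widetilde{X}_{\rho'}$ has exactly $k$ components, and each is canonically identified with the regular $m$-fold cover $\widetilde{X}_\rho$.

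For the deck transformation, I would model $\widetilde{X}_{\rho'}$ explicitly as the quotient $\widetilde{X}_{\ker\rho} \times_{\pi_1(X)} \Z_{km}$, where $\pi_1(X)$ acts on $\Z_{km}$ by right translation through $\rho'$, and the generator $1\in \Z_{km}$ acts as the canonical deck transformation by left translation on the $\Z_{km}$ factor. Label the components by the cosets $[0],[1],\dots,[k-1]$ of $\operatorname{Im}(\rho') = k\Z_{km}$ in $\Z_{km}$, chosen as the standard representatives $0,1,\dots,k-1$. The generator $1\in\Z_{km}$ shifts $[j]\mapsto[j+1]$ for $0\le j\le k-2$, purely permuting components without twist. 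The wrap-around from $[k-1]$ to $[0]$ uses the element $k\in \Z_{km}$, which lies in $\operatorname{Im}(\rho')$ and is precisely the image of the generator $1\in\Z_m$; so under the identification of each component with $\widetilde{X}_\rho$, this wrap-around is realized by the order-$m$ deck transformation $T_\rho$.

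There is no deep obstacle here; the entire result is a formal covering-space bookkeeping exercise. The only point demanding care is the convention for the inclusion $\Z_m\hookrightarrow \Z_{km}$ (multiplication by $k$), together with matching up the component-permutation action of $\Z_{km}$ on $\Z_{km}/k\Z_{km}\cong \Z_k$ with the residual $\Z_m$-action inside a single component; once one fixes consistent coset representatives and notes that the $k$-fold iterate of the generator of $\Z_{km}$ equals the image of the generator of $\Z_m$, both clauses of the statement follow immediately.
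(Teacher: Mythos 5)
Your argument is correct and follows exactly the route the paper intends: the paper's own ``proof'' is the single sentence ``This result follows from standard covering space theory,'' and you have simply supplied the covering-space bookkeeping (components indexed by $\Z_{km}/\operatorname{Im}(\rho')$, each identified with the cover determined by $\ker\rho'=\ker\rho$, with the generator of $\Z_{km}$ cyclically permuting the cosets and its $k$-th power restricting to $T_\rho$ on each piece) that the paper elides. One cosmetic slip: the balanced product should be $\widetilde{X}_{\mathrm{univ}}\times_{\pi_1(X)}\Z_{km}$, or equivalently $\widetilde{X}_{\ker\rho}\times_{\Z_m}\Z_{km}$, not $\widetilde{X}_{\ker\rho}\times_{\pi_1(X)}\Z_{km}$, since $\pi_1(X)$ itself does not act on $\widetilde{X}_{\ker\rho}$ -- only its quotient by $\ker\rho$ does; this does not affect the substance of the argument.
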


\begin{proof}  This result follows from standard covering space theory.  \end{proof}

\begin{theorem} \label{thm:equivcount} Suppose that $\rho \co \pi_1(X) \to \Z_m$ is a homomorphism  and $\rho' \co \pi_1(X)\to \Z_{km}$ is the composition of $\rho $ with the inclusion $\Z_m \subset \Z_{km}$.   Let $T_\rho$ be the order $m$ deck transformation of $\widetilde{X}_{\rho}$ and let $T_{\rho'}$ be the order $km$ deck transformation of  $\widetilde{X}_{\rho'}$. Then the $k$ power of $T_{\rho'}$ is a transformation of order $m$ and 
\[\beta_i^\zeta (\widetilde{X}_{\rho'}, T_{\rho'}^{k}, \F) = k \beta_i^\zeta(\widetilde{X}_\rho, T_\rho, \F).
\]

\end{theorem}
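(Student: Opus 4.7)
The plan is to reduce the statement directly to Theorem~\ref{thm:disjoint}, which already identifies $\widetilde{X}_{\rho'}$ with a disjoint union of $k$ copies of $\widetilde{X}_\rho$, and then track what the $k$th power of the deck transformation does on this decomposition.

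First I would label the $k$ components of $\widetilde{X}_{\rho'}$ as $Y_0, Y_1, \dots, Y_{k-1}$, each identified with $\widetilde{X}_\rho$, so that by Theorem~\ref{thm:disjoint} the generator $T_{\rho'}$ carries $Y_j$ to $Y_{j+1}$ (indices mod $k$), and the ``wraparound'' map $Y_{k-1} \to Y_0$ is the order-$m$ deck transformation $T_\rho$ of $\widetilde{X}_\rho$. It is then immediate that $T_{\rho'}^k$ preserves each component $Y_j$ and restricts on each one to $T_\rho$. In particular $T_{\rho'}^k$ has order exactly $m$, which is the first claim.

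Next I would compute homology with coefficients in the algebraic closure $\obF$. Since homology commutes with disjoint union,
\[
H_i(\widetilde{X}_{\rho'},\obF) \ \cong \ \bigoplus_{j=0}^{k-1} H_i(Y_j,\obF) \ \cong \ \bigoplus_{j=0}^{k-1} H_i(\widetilde{X}_\rho,\obF),
\]
and under this identification the action of $T_{\rho'}^k$ is the direct sum of $k$ copies of the action of $T_\rho$. Since eigenspace decomposition commutes with direct sums, for each $m$-th root of unity $\zeta \in \obF$ we obtain
\[
H_i^\zeta(\widetilde{X}_{\rho'}, T_{\rho'}^k, \obF) \ \cong \ \bigoplus_{j=0}^{k-1} H_i^\zeta(\widetilde{X}_\rho, T_\rho, \obF).
\]
Taking $\obF$-dimensions gives the desired identity $\beta_i^\zeta(\widetilde{X}_{\rho'},T_{\rho'}^k,\F) = k\,\beta_i^\zeta(\widetilde{X}_\rho,T_\rho,\F)$.

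There is no real obstacle here; the statement is essentially a repackaging of Theorem~\ref{thm:disjoint}. The only point requiring a little care is the identification of $T_{\rho'}^k$ with $T_\rho$ on each summand rather than with some other power of $T_\rho$, which is why I would be explicit about the wraparound map being the canonical deck transformation of $\widetilde{X}_\rho$ (this is the content of the last sentence of Theorem~\ref{thm:disjoint} and must be invoked directly).
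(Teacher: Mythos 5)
Your proposal is correct and follows essentially the same approach as the paper: the paper's proof is precisely the observation that $T_{\rho'}^k$ preserves each factor in the disjoint-union decomposition of Theorem~\ref{thm:disjoint} and restricts on each to $T_\rho$, from which the dimension count follows. You have simply spelled out the bookkeeping (labeling components, passing to $\obF$-coefficients, noting homology and eigenspace decompositions commute with disjoint unions) in more detail than the paper does.
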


\begin{proof}  The action of the  $k$ power of $T_{\rho'}$ leaves invariant each factor $\widetilde{X}_\rho$  in    the decomposition   given by Theorem~\ref{thm:disjoint},  $\widetilde{X}_\rho \sqcup  \widetilde{X}_\rho \sqcup \cdots  \sqcup \widetilde{X}_\rho$.  It restricts to each factor to be the deck transformation of $  \widetilde{X}_\rho$.  
\end{proof}

\subsection{The equivariant CW--chain complex}\label{sec:eqvcw}  Suppose that $X$ has the structure of a CW--complex.  That structure lifts to give a   compatible CW--structure on each covering space; there is also a lifted CW--structure on branched covers, assuming that the branch set in $X$ is a subcomplex.  In particular, there are CW--chain complexes $C_*(\widetilde{X} , \F)$ for any covering space  or branched covering spaces we consider.

We will also be working with pairs  $(X,M)$ that have a relative CW--structure, and these structures also lift to covering spaces.   

If $\rho \co \pi_1(X) \to \Z_m$ is a homomorphism and $\wX_\rho$ is the induced cover with deck transformation $T_\rho$, then for each $m$--root of unity $\zeta \in \overline{\F}$ there is a subcomplex $C_*^\zeta( \wX_\rho, T_\rho, \overline{\F})$.  The homology of this complex is the equivalent homology discussed earlier.

\begin{theorem} \label{thm:cocount} Suppose that $\F$ contains a primitive $m$--root of unity $\zeta$.  Let $X$ be a space or a pair of spaces.  Suppose that $\rho\co \pi_1(X) \to \Z_m$ induces a cover $\wX_\rho$ with deck transformation $T_\rho$.  Let  $\rho' \co   \pi_1(X) \to \Z_{km}$ be the composition of $\rho$ and the inclusion map $\Z_m \subset \Z_{kn}$, and let $\wX_{\rho'}$ be the associated cover with deck transformation $T_{\rho'}$.  
\begin{itemize}
\item  For all $i$,   $ \dim C_i^\zeta (\wX_\rho, T_\rho, \F) = \dim   C_i(X)    $.

\item For all $i$,   $  \dim C_i^\zeta (\wX_{\rho'}, (T_{\rho'})^k, \F) = k \dim  C_i(X)$.

\end{itemize}

\end{theorem}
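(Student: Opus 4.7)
The plan is to reduce everything to a standard fact about the regular representation of $\Z_m$. For any CW-complex (or pair) $X$ and the $m$-fold cover $\wX_\rho$, the lifted CW-structure has, for each cell $e_j$ of $X$, exactly $m$ cells in $\wX_\rho$, and the deck group $\Z_m$ permutes these $m$ preimages freely and transitively. Therefore, as an $\F[\Z_m]$-module, $C_i(\wX_\rho,\F)$ is isomorphic to $\F[\Z_m]^{n_i}$, where $n_i = \dim_\F C_i(X,\F)$.

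Next I would use the hypothesis that $\F$ contains a primitive $m$-th root of unity $\zeta$ to split the regular representation. Under this assumption, $\F[\Z_m]\cong \bigoplus_{\eta^m=1}\F_\eta$, a direct sum of $m$ one-dimensional $T_\rho$-eigenspaces, one for each $m$-th root of unity $\eta$, each appearing with multiplicity one. Applying this factor by factor to $C_i(\wX_\rho,\F)\cong \F[\Z_m]^{n_i}$ shows that the $\zeta$-eigenspace $C_i^\zeta(\wX_\rho,T_\rho,\F)$ is exactly $n_i$-dimensional, proving the first bullet.

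For the second bullet I would invoke Theorem~\ref{thm:disjoint}, which identifies $\wX_{\rho'}$ as a disjoint union of $k$ copies of $\wX_\rho$, where $T_{\rho'}$ cyclically shifts the copies and, after $k$ steps, acts on the resulting copy by $T_\rho$. Consequently $(T_{\rho'})^k$ preserves each of the $k$ summands and restricts on each summand to $T_\rho$. At the chain level this gives an isomorphism of $\langle (T_{\rho'})^k\rangle$-modules
\[
C_i(\wX_{\rho'},\F)\;\cong\;\bigoplus_{j=1}^{k} C_i(\wX_\rho,\F),
\]
where the action is diagonal via $T_\rho$ on each factor. Taking $\zeta$-eigenspaces commutes with direct sums, so
\[
C_i^\zeta\!\bigl(\wX_{\rho'},(T_{\rho'})^k,\F\bigr)\;\cong\;\bigoplus_{j=1}^{k} C_i^\zeta(\wX_\rho,T_\rho,\F),
\]
and the first bullet then yields dimension $k\,n_i = k\dim_\F C_i(X)$.

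The only even mildly delicate step is the identification of $C_i(\wX_\rho,\F)$ as the free $\F[\Z_m]$-module of rank $n_i$; this is routine once one fixes a lift of each cell, and the relative case is handled identically by lifting only the cells of $X$ not contained in the relative subcomplex. No new ideas are needed beyond Theorem~\ref{thm:disjoint} and the splitting of $\F[\Z_m]$ when $\zeta\in\F$.
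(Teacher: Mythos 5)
Your argument is correct and matches the paper's proof in essence: identify $C_i(\wX_\rho,\F)$ as a free $\F[\Z_m]$-module of rank $\dim C_i(X)$, split $\F[\Z_m]$ into $m$ one-dimensional eigenspaces using the hypothesis $\zeta\in\F$, and then handle the $km$-fold cover by decomposing it into $k$ disjoint copies of $\wX_\rho$ via Theorem~\ref{thm:disjoint}. The only difference is cosmetic: the paper cites Theorem~\ref{thm:equivcount} for the second bullet (which formally concerns Betti numbers rather than chain ranks), whereas you appeal directly to Theorem~\ref{thm:disjoint} at the chain level, which is actually the cleaner reference for the statement as written.
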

\begin{proof} Let $t$ generate $\Z_m$.  As a $\F[\Z_m]$--module,  $C_i(\wX_\rho, \F)$ splits as a direct sum of modules isomorphic to  $\F[Z_m]$.  There is one summand for each $i$--cell of $X$.    We then have the decomposition $\F[Z_n] \cong \oplus_{i=0}^{n-1} \F[Z_m]/\left< t- \zeta^i\right>$.  The summand  $\F[Z_m]/\left< t- \zeta^i\right>$ is a $\zeta^i$--eigenspace of the action.  Thus, each $i$--cell of $X$ provides an  eigenvector in $C_i^\zeta(\wX_\rho, T_\rho, \F)$.  The second  statement then follows from Theorem~\ref{thm:equivcount}.
\end{proof}

\subsection{Pairs of spaces}  Let $(X, Y)$ be a CW--pair and let $\rho \co H_1(X) \to \Z_m$.   Then there is an associated covering space pair $\widetilde{(X,Y)}$ and we can consider   the equivariant relative homology groups of this cover.  All the statements in the Section~\ref{sec:eqvcw} above carry over to this relative setting.  

\subsection{Computing the equivariant homology for spaces associated to knots}
For any given knot, the computation of $\beta_i^\zeta(M_n(K),      \F_p)$ is fairly straightforward, using little more that what is covered in, say, Rolfsen's text~\cite{MR0515288}.  The computation of the metacyclic invariants can be   technically challenging; in particular, they are not determined by a Seifert matrix.   For this reason, we will restrict our examples to those for which for which the computation is quickly  accessible.    


\section{Handlebody structure}\label{sec:handles}

\begin{theorem} \label{thm:handles2} The pair $(W_n(\Sigma)\setminus \wSig, M_n(K_0)\setminus \wK)$ has a relative handlebody decomposition with:
\begin{itemize}
\item  $nc_0(\Sigma)$  $1$--handles.
\item  $nc_1(\Sigma) $ $2$--handles.
\item  $nc_2(\Sigma)$   $3$--handles.
\end{itemize}
\end{theorem}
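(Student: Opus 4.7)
The plan is to first build a relative handle decomposition downstairs on the pair $\bigl((S^3 \times [0,1]) \setminus \nu(\Sigma),\, (S^3 \setminus \nu(K_0)) \times \{0\}\bigr)$ using the Morse function on $\Sigma$, and then lift that decomposition to the $n$--fold cyclic branched cover. The key numerical observation driving the count is that an index $i$ critical point of the projection $\Sigma \to [0,1]$ contributes exactly one $(i+1)$--handle to the 4--dimensional complement. Downstairs this will produce $c_0(\Sigma)$ 1--handles, $c_1(\Sigma)$ 2--handles, and $c_2(\Sigma)$ 3--handles, and upstairs each such handle will lift to $n$ disjoint handles.

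The heart of the argument is the local model at a critical point of $f \co \Sigma \to [0,1]$ of index $i$. In a Morse chart for $f$ I would choose coordinates on a 4--ball neighborhood $B$ so that the height function on $S^3 \times [0,1]$ is simply the last coordinate $s$ and $\Sigma \cap B = \{z = 0,\, s = -|x|^2 + |y|^2\}$ with $x \in \R^i$ and $y \in \R^{2-i}$. Regarding $B \setminus \nu(\Sigma \cap B)$ as a cobordism from its lower face to its upper face relative to the vertical side, I would check that it is diffeomorphic to a standard 4--dimensional $(i+1)$--handle $D^{i+1} \times D^{3-i}$ attached along $S^i \times D^{3-i}$. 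For $i=0$ this reduces to the classical fact that the complement of an unknotted cap in $B^4$ is $S^1 \times D^3$, attached to the lower face as a 1--handle; $i=2$ is the dual picture and produces a 3--handle; $i=1$ (the saddle) is the standard 4--dimensional interpretation of a band move as a 2--handle attachment. Between critical values the projection restricts to a submersion on the complement and so contributes only product cobordisms, no new handles.

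To pass to the branched cover, I would invoke that $W_n(\Sigma) \setminus \nu(\wSig)$ is exactly the $n$--fold cyclic cover of $(S^3 \times [0,1]) \setminus \nu(\Sigma)$ associated to the linking--number--mod--$n$ homomorphism (since $\Sigma$ is connected, $H_1$ of the complement is $\Z$ generated by the meridian of $\Sigma$, so this cover is canonical), and $W_n(\Sigma)$ is recovered by filling the cover back in along $\wSig$. Each handle $D^k \times D^{4-k}$ is contractible, so its preimage under the $n$--fold cover is a disjoint union of $n$ copies, each attached along a lift of the original attaching region. Restricting the lifted handle decomposition to the base $M_n(K_0) \setminus \wK$ gives the claimed counts of $nc_0(\Sigma)$, $nc_1(\Sigma)$, and $nc_2(\Sigma)$ handles of indices $1$, $2$, and $3$ for the pair $(W_n(\Sigma) \setminus \wSig, M_n(K_0) \setminus \wK)$.

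The main obstacle is the saddle case of the local model: one has to verify carefully that the 4--dimensional complement of a local saddle surface is truly a single 2--handle, with its attaching circle the expected one in the band picture and with the correct framing. The $i=0$ and $i=2$ cases reduce to the familiar description of unknotted half--disks in $B^4$, and the covering--space lifting step is routine, so the genuine 4--dimensional content of the proof is concentrated in the saddle analysis.
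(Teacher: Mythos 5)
Your proof is correct and follows essentially the same route the paper takes: the paper simply cites Gompf--Stipsicz (Proposition 6.2.1) for the relative handle decomposition of $(S^3 \times [0,1]) \setminus \Sigma$ built from the Morse function on $\Sigma$, with each index $i$ critical point contributing an $(i+1)$--handle, and then observes that this decomposition lifts to the cyclic cover. Your write-up fills in the local Morse-model analysis that the cited proposition packages and then does the same lifting step, so the content is the same with different levels of detail.
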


\begin{proof} See, for instance, \cite[Proposition 6.2.1]{MR1707327} for a description of the handlebody structure on $(S^3 \times [0,1])\setminus \Sigma$.  That structure lifts to the covering space.
\end{proof}

\begin{theorem} \label{thm:handles} The pair $( W_n(\Sigma),  M_n(K_0))$ has a relative handlebody decomposition with:
\begin{itemize}
\item  $nc_0(\Sigma)$     $1$--handles.
\item  $nc_1(\Sigma) $   $2$--handles.
\item  $nc_2(\Sigma) + 2g(\Sigma)  $     $3$--handles.
\end{itemize}
\end{theorem}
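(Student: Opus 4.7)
I plan to obtain this by augmenting the decomposition in Theorem~\ref{thm:handles2} with the handles that appear upon filling in a tubular neighborhood of the branch surface.

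Since the branched covering is one-to-one on the branch set, $\wSig$ is diffeomorphic to $\Sigma$: a genus $g$ surface with boundary components $\wK_0$ and $\wK_1$. Being an oriented surface in the oriented $4$-manifold $W_n(\Sigma)$, its normal bundle is trivial, so a tubular neighborhood satisfies $N(\wSig) \cong \wSig \times D^2$. Then
\[
W_n(\Sigma) = \bigl(W_n(\Sigma) \setminus N(\wSig)\bigr) \cup N(\wSig),
\]
glued along $\wSig \times S^1$, while $M_n(K_0) = (M_n(K_0) \setminus N(\wK_0)) \cup N(\wK_0)$ with $N(\wK_0) = \wK_0 \times D^2 \subset \partial N(\wSig)$.

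First I would apply Theorem~\ref{thm:handles2} to the exterior pair to obtain $nc_0$ $1$-handles, $nc_1$ $2$-handles, and $nc_2$ $3$-handles. Next I would analyze the handles contributed when attaching $N(\wSig)$. Choose a Morse function on $\wSig$ having exactly $2g$ index $1$ critical points and no local extrema; this exists because $\wSig$ is a connected cobordism of genus $g$ between two circles. Crossing with $D^2$ yields a relative handle decomposition of $(\wSig \times D^2, \wK_0 \times D^2)$ with $2g$ $1$-handles in dimension four. Now $N(\wSig)$ is attached to the exterior along its lateral boundary $\wSig \times S^1$ rather than through $\wK_0 \times D^2$, which is incorporated into $M_n(K_0)$ to complete the lower $3$-manifold boundary. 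The standard duality that sends an $i$-handle attached along one boundary component of a $4$-dimensional cobordism to a $(4-i)$-handle attached along the opposite component therefore converts these $2g$ $1$-handles into $2g$ additional $3$-handles in the decomposition of the pair $(W_n(\Sigma), M_n(K_0))$.

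Assembling the two contributions yields $nc_0$ $1$-handles, $nc_1$ $2$-handles, and $nc_2 + 2g$ $3$-handles. The main technical step will be justifying the duality cleanly and verifying that no spurious handles are introduced when reattaching the portions of $\partial N(\wSig)$ lying in $M_n(K_0)$ and $M_n(K_1)$; I would handle this by constructing a single Morse function on $W_n(\Sigma)$ compatible with the branching structure whose critical points implement all the handles simultaneously, rather than treating the exterior and tubular neighborhood as disjoint steps. As a sanity check, this count gives $\chi(W_n(\Sigma), M_n(K_0)) = -nc_0 + nc_1 - nc_2 - 2g = 2g(n-1)$, matching the value obtained from the Riemann--Hurwitz formula for an $n$-fold branched cover of $S^3 \times [0,1]$ over $\Sigma$.
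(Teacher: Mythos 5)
Your overall strategy coincides with the paper's: start from Theorem~\ref{thm:handles2} for the exterior pair and then account for the handles that arise from filling in the tubular neighborhood $N(\wSig)=\wSig\times D^2$. The count also comes out right, and your Euler-characteristic check against Riemann--Hurwitz is a good idea. However, the key step --- explaining \emph{why} the $2g$ one-handles of $(\wSig\times D^2,\wK_0\times D^2)$ contribute three-handles to $(W_n(\Sigma),M_n(K_0))$ --- is reasoned incorrectly, and the error is masked by a numerical coincidence.

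The duality you invoke, which exchanges an $i$-handle for an $(n-i)$-handle by reading a cobordism from the opposite end, is not what is going on here. The conversion happens because a handle of $N(\wSig)$ is glued to the ambient manifold not only along its ordinary attaching region inside $\wSig\times D^2$ but simultaneously along its portion of the lateral boundary $\wSig\times S^1$, which is shared with the exterior. Concretely, an $i$-handle $D^i\times D^{2-i}$ of $\wSig$ thickens to $D^i\times D^{2-i}\times D^2$, and the combined attaching region $(\partial D^i\times D^{2-i}\times D^2)\cup(D^i\times D^{2-i}\times S^1)$ is exactly $\partial(D^i\times D^2)\times D^{2-i}\cong S^{i+1}\times D^{2-i}$, i.e.\ the attaching region of an $(i+2)$-handle. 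This is the rule the paper uses (``for each $i$-handle in $\Sigma$ there is an $(i+2)$-handle added''). For $i=1$ in dimension $4$ you have $i+2=3=4-i$, so ``duality'' accidentally gives the right number; but for a $0$-handle it would predict a $4$-handle when the correct answer is a $2$-handle, and for a $2$-handle it would predict a $2$-handle when the correct answer is a $4$-handle. You avoided these cases only because you chose a Morse function on $\wSig$ with no extrema, so the argument as written is fragile. Your proposed repair --- building a single Morse function on $W_n(\Sigma)$ compatible with the branching --- would indeed fix this, but once you carry it out you will find that the mechanism is the lateral-boundary attachment, not the upside-down duality; you should replace that sentence with the $(i+2)$-handle computation.
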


\begin{proof}  We have that $(W_n(\Sigma), M_n(K_0))$ is built from  $(W_n(\Sigma)\setminus \wSig, M_n(K_0)\setminus \wK_0)$ via handle additions.  For each $i$--handle in $\Sigma$ there is an $(i+2)$--handle added.    

The surface  $\Sigma$ can be built with one $0$--cell and $\beta_1(\Sigma)$ $1$--cells.  The 0--cell and the first $1$--cell comprise $K_0$.  Hence, in building  $(W_n(\Sigma), M_n(K_0))$ the   added $2$--handle and the first $3$--handle complete the construction of a product  neighborhood of  $ M_n(K_0 )$.  There remain $(\beta_1(\Sigma) -1)$ $3$--handles to add.  Finally, $\beta_1(\Sigma) -1 = 2g$.
\end{proof}

\section{Homological   constraints arising from cyclic branched covers}\label{sec:homconstrain}

\subsection{Homological constraints.} 

In this section, we will denote the order $n$ deck transformation of $M_n(K)$ by $T$.  That is, no confusion should result by using the symbol $T$ without notating its dependence on $K$ and $n$.  We will work with finite  fields of prime order, $\F_p$, that contain   primitive $n$--roots of unity; that is, $p - 1 \equiv 0 \mod n$.   Unless specified, we  will not assume that a given $n$--root of unity $\zeta$ is primitive.   The main result of this section is the following theorem.

\begin{theorem} \label{thm:homology-bound} Suppose that $\Sigma$ is a cobordism from $K_1$ to $K_0$.    Then for all $n$, for all prime powers  $p$ satisfying $p - 1 \equiv 0\ \mod n$, and for all $\zeta   \in \F_p$ satisfying $\zeta^n =1$, we have 
\[
c_0(\Sigma) \ge \frac{\beta_1^\zeta( M_n(K_1),T, \F_p)  - \beta_1^\zeta( M_n(K_0),T, \F_p)}{2} -  g(\Sigma).
\]

\end{theorem}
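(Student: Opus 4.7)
The plan is to combine the handlebody decompositions from Theorem~\ref{thm:handles} (for the pair $(W_n(\Sigma), M_n(K_0))$, and its counterpart for $(W_n(\Sigma), M_n(K_1))$ obtained by reversing the Morse function on $\Sigma$) with long exact sequences and an Euler characteristic calculation in the $\zeta$-eigenspace chain complexes.  The case $\zeta = 1$ is essentially trivial: since $M_n(K)/\Z_n = S^3$ and $p$ is prime to $n$, the transfer shows both $\beta_1^1(M_n(K_i), \F_p)$ vanish, so the desired inequality reduces to $c_0(\Sigma) \ge -g(\Sigma)$, which holds since $c_0(\Sigma) \ge 0$.  So I assume $\zeta \ne 1$ for the rest of the argument.

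The key structural observation is that the $\Z_n$ action on $W_n(\Sigma)$ is free on $W_n(\Sigma) \setminus \wSig$ while the cells of $\wSig$ are pointwise fixed.  In the relative handle decomposition of $(W_n(\Sigma), M_n(K_0))$, the $nc_0$ one-handles, $nc_1$ two-handles, and $nc_2$ three-handles coming from the complement (Theorem~\ref{thm:handles2}) are in free $\Z_n$-orbits, each contributing a copy of $\F_p[\Z_n]$ to the cellular chain complex, while the $2g$ extra three-handles are tubular neighborhoods of $1$-cells of $\wSig$; these are $\Z_n$-fixed with the deck transformation acting by orientation-preserving rotation of the normal disk, so each contributes the trivial $\F_p[\Z_n]$-module $\F_p$.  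Passing to the $\zeta$-eigenspace for $\zeta \ne 1$ kills the trivial summands, so by Theorem~\ref{thm:cocount} applied to the free part, $C_*^\zeta(W_n(\Sigma), M_n(K_0); \F_p)$ has dimensions $c_0, c_1, c_2$ in degrees $1, 2, 3$; the reversed cobordism gives $C_*^\zeta(W_n(\Sigma), M_n(K_1); \F_p)$ dimensions $c_2, c_1, c_0$.  Write $W = W_n(\Sigma)$, $M_i = M_n(K_i)$, and set $a = \beta_1^\zeta(M_0, \F_p)$, $b = \beta_1^\zeta(M_1, \F_p)$, $w = \beta_1^\zeta(W, \F_p)$.

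The long exact sequence of $(W, M_0)$ in the $\zeta$-eigenspace terminates with $H_1^\zeta(W) \twoheadrightarrow H_1^\zeta(W, M_0) \to H_0^\zeta(M_0) = 0$, so $w \le a + \dim H_1^\zeta(W, M_0) \le a + c_0$.  For the other direction, the Euler characteristic of the $\zeta$-eigenspace chain complex of $(W, M_1)$ is $-c_2 + c_1 - c_0 = 2g$ (using $c_1 = c_0 + c_2 + 2g$), which gives
\[
\dim H_2^\zeta(W, M_1) = 2g + \dim H_1^\zeta(W, M_1) + \dim H_3^\zeta(W, M_1),
\]
together with the crude chain-complex bound $\dim H_3^\zeta(W, M_1) \le c_0$.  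The long exact sequence of $(W, M_1)$, in the segment $H_2^\zeta(W, M_1) \xrightarrow{\delta} H_1^\zeta(M_1) \xrightarrow{i_*} H_1^\zeta(W) \to H_1^\zeta(W, M_1) \to 0$, yields $\dim H_1^\zeta(W, M_1) = w - y$ and $b \le y + \dim H_2^\zeta(W, M_1)$, where $y = \dim \mathrm{image}(i_*)$.

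Chaining these estimates,
\[
b \le y + \bigl(2g + (w - y) + c_0\bigr) = w + c_0 + 2g \le (a + c_0) + c_0 + 2g = a + 2c_0 + 2g,
\]
which rearranges to the stated bound.  The main subtlety to verify carefully is the claim in the second paragraph, namely that the extra $2g$ three-handles from the branch locus contribute only to the $\zeta = 1$ eigenspace; once that bookkeeping is in place, the two Euler characteristic identities and the two long exact sequences combine mechanically to cancel the nuisance quantity $y$ and yield the desired estimate.
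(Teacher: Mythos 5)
Your proposal is correct and follows essentially the same route as the paper: a relative handle decomposition of $(W_n(\Sigma), M_n(K_i))$ (Theorems~\ref{thm:handles2} and~\ref{thm:handles}), the $\zeta$-eigenspace chain complex, and a pair of long exact sequences bridged by an Euler-characteristic count. The paper packages the homological step as Lemma~\ref{lemma:inequalities1}, works throughout with the complement pair $(W_n(\Sigma)\setminus\wSig, M_n(K_i)\setminus\wK_i)$, and only at the very end observes that passing to the branched cover contributes nothing outside the $1$-eigenspace, whereas you treat the branched pair directly and make the $2g$ fixed $3$-handles' membership in the $1$-eigenspace the central bookkeeping point; you also dispose of $\zeta=1$ explicitly, which the paper leaves to Lemma~\ref{lemma:1eigen} implicitly. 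These are cosmetic reorganizations of the same argument.
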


Before proving this, we isolate the case $\zeta =1$ in a lemma  and then prove another   lemma that will simplify our exposition.

\begin{lemma} \label{lemma:1eigen} Let  $K$ be a knot and let   $\{n,p\}$ be a relatively prime pair.  Then the  1--eigenspace of the deck transformation acting on $H_1(M_n(K), \F_p)$ is trivial.
\end{lemma}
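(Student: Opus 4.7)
The plan is to reduce to the transfer argument for the unbranched cyclic cover of the knot complement and then pass to $M_n(K)$ by filling in the branch curve. Set $X = S^3 \setminus K$ and $\widetilde{X} = M_n(K) \setminus \wK$, so that $q \co \widetilde{X} \to X$ is an honest (unbranched) $n$--fold cyclic cover with deck transformation $T$. Because the $\Z_n$--action on $\widetilde{X}$ is free and $\gcd(n,p) = 1$, the standard averaging argument furnishes an isomorphism
\[
q_* \co H_1(\widetilde{X}, \F_p)^T \xrightarrow{\ \cong\ } H_1(X, \F_p) \cong \F_p,
\]
with the right-hand side generated by the meridian $\mu$ of $K$. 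In particular the $1$--eigenspace of $T$ on $H_1(\widetilde{X}, \F_p)$ is one-dimensional.

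I would next locate an explicit generator. Let $\widetilde{\mu}$ denote a meridian of the branch curve $\wK$ inside $\widetilde{X}$. The deck transformation $T$ fixes $\wK$ pointwise and acts on a normal disk as a rotation of order $n$ about the central point, so it carries the small loop $\widetilde{\mu}$ to itself and $T_*[\widetilde{\mu}] = [\widetilde{\mu}]$. The local branched model $z \mapsto z^n$ on the transverse disk gives $q_*[\widetilde{\mu}] = n[\mu]$, which is nonzero since $n$ is a unit in $\F_p$. Thus $[\widetilde{\mu}]$ generates $H_1(\widetilde{X}, \F_p)^T$.

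The final step recovers $M_n(K)$ from $\widetilde{X}$ by gluing back a tubular neighborhood of $\wK$. Excision, the long exact sequence of the pair $(M_n(K), \widetilde{X})$, and the standard computation of $H_*(D^2 \times S^1, S^1 \times S^1;\F_p)$ together yield the short exact sequence of $\F_p[\Z_n]$--modules
\[
0 \longrightarrow \langle [\widetilde{\mu}] \rangle \longrightarrow H_1(\widetilde{X}, \F_p) \longrightarrow H_1(M_n(K), \F_p) \longrightarrow 0.
\]
Because $\gcd(n,p) = 1$, the group ring $\F_p[\Z_n]$ is semisimple and the functor $(-)^T$ is exact. Applying it and using that $\langle [\widetilde{\mu}] \rangle$ coincides with the whole $1$--eigenspace of $H_1(\widetilde{X}, \F_p)$, I conclude that $H_1(M_n(K), \F_p)^T = 0$.

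The main conceptual ingredient is the semisimplicity of $\F_p[\Z_n]$ under the coprimality hypothesis, which simultaneously supplies both the transfer isomorphism and the exactness of $T$--invariants; the only geometric input is the identification $q_*[\widetilde{\mu}] = n[\mu]$ from the local model of the branched cover, which is routine.
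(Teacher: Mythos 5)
Your proof is correct, but it takes a somewhat different route from the paper's. The paper applies the transfer directly to the branched cover $f\co M_n(K)\to S^3$: for a cell $d$ of $S^3$ one sets $\tau(d)=\sum_{i=0}^{n-1}T^i(\widetilde d)$, and since $\gcd(n,p)=1$ the composites $f_*\circ\tau$ and $\tau\circ f_*$ are both multiplication by $n$ (on $C_*(S^3,\F_p)$ and on the $1$--eigensubcomplex of $C_*(M_n(K),\F_p)$, respectively), so $\tau$ induces an isomorphism $H_1(S^3,\F_p)\to H_1^1(M_n(K),\F_p)$, forcing the target to vanish. You instead run the transfer only on the \emph{unbranched} cover $\widetilde X=M_n(K)\setminus\widetilde K\to X=S^3\setminus K$, where the $\Z_n$--action is free and the statement $H_1(\widetilde X,\F_p)^T\cong H_1(X,\F_p)\cong\F_p$ is the most classical form; you then identify the invariant generator with $[\widetilde\mu]$, derive the equivariant short exact sequence $0\to\langle[\widetilde\mu]\rangle\to H_1(\widetilde X,\F_p)\to H_1(M_n(K),\F_p)\to 0$ from the pair $(M_n(K),\widetilde X)$, and use semisimplicity of $\F_p[\Z_n]$ to take $T$--invariants exactly. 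Your route is longer, but it sidesteps the small subtlety that the paper's transfer is being applied to a \emph{branched} (hence non-free) action, where one must notice that $\tau\circ f_*$ equals the averaging operator $\sum T^i_\#$ rather than $n\cdot\mathrm{id}$ on all of $C_*(M_n(K))$, and only restricts to multiplication by $n$ on the $1$--eigenspace. The paper's argument is more economical once that point is absorbed; yours is more self-contained, reducing entirely to the free-action case plus one Mayer--Vietoris computation, at the cost of needing the explicit identification of $[\widetilde\mu]$ as the image of the meridian disk under the boundary map of the pair. Both hinge on the same algebraic input, namely invertibility of $n$ in $\F_p$.
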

\begin{proof} This is a fairly standard result, the proof of which we   outline.  Let $ f\co M_n(K) \to S^3$ be the branched cover.  Given any cell $d$ in a
 compatible CW--structure on $S^3$, we can choose a lift $\widetilde{d}$ in $ M_n(K)$ and define the transfer to be $\tau (d) = \sum_{i=0}^{n-1} T^i (\widetilde{d})$.  The choice of coefficients ensures that $\tau$ induces an isomorphism to the $1$--eigenspace,  $H_i(S^3, \F_p) \to H_i^{1}(M_n(K), \F_p)$.  The target is thus trivial for $i=1$.
\end{proof}

\begin{lemma} \label{lemma:inequalities1} Let $(W,M)$ be a CW--pair supporting an action $T$ of $\Z_n$.  Suppose that $\F$ is a field containing  an element $\zeta  \ne 1$ for which $\zeta^n =1$.  Finally, assume that $T$ preserves the components of $M$; that is, that $T_*$ acts trivially on $H_0(M, \F)$.  Then  with $\F$--coefficients, 
\[     \beta_1(W) \le \beta^\zeta_1(M) + \dim( C_1^\zeta(W, M))
\]
and
\[   \beta_1(W) \ge  \beta_1^\zeta(M) + \dim( C_1^\zeta(W, M))   -  \dim( C^\zeta_2(W, M)).\]
\end{lemma}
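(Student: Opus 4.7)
The plan is to split the long exact sequence of the pair $(W,M)$ with $\F$--coefficients into $\zeta$--eigenspace components and read off both bounds from the resulting short exact sequence. I interpret the displayed $\beta_1(W)$ as the eigenspace Betti number $\beta_1^\zeta(W) := \dim H_1^\zeta(W,\F)$, since the right-hand sides only see $\zeta$--eigenspace data and the full $\beta_1(W)=\sum_{\zeta'}\beta_1^{\zeta'}(W)$ has contributions from other eigenspaces which the chain-level estimates on $C_*^\zeta(W,M)$ cannot control; the missing superscript must be a notational slip, and the formal bridge between the two quantities is just the equivariant direct sum decomposition $H_1(W,\F)=\bigoplus_{\zeta'}H_1^{\zeta'}(W,\F)$, which exhibits $\beta_1^\zeta(W)$ as the contribution of one summand.

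First I would use that $\F$ contains $\zeta$ and that $T$ preserves the CW--structure to split the cellular complexes of $W$, $M$, and $(W,M)$ equivariantly; then the long exact sequence of the pair splits into a direct sum of eigenspace long exact sequences. The hypothesis that $T$ fixes the components of $M$ forces $H_0(M,\F)$ to sit entirely in the $1$--eigenspace, so $H_0^\zeta(M)=0$ whenever $\zeta\ne 1$. Consequently the $\zeta$--part truncates to
\[ H_2^\zeta(W,M)\xrightarrow{\partial_*}H_1^\zeta(M)\to H_1^\zeta(W)\to H_1^\zeta(W,M)\to 0, \]
from which one reads off the exact dimension identity
\[ \beta_1^\zeta(W)=\beta_1^\zeta(M)-\dim\mathrm{im}(\partial_*)+\dim H_1^\zeta(W,M). \]

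The upper bound follows at once from $\dim\mathrm{im}(\partial_*)\ge 0$ and the trivial estimate $\dim H_1^\zeta(W,M)\le\dim C_1^\zeta(W,M)$. For the lower bound I would estimate $\dim\mathrm{im}(\partial_*)\le\dim H_2^\zeta(W,M)\le\dim\ker\partial_2^\zeta=\dim C_2^\zeta(W,M)-\dim\mathrm{im}\partial_2^\zeta$ and, in parallel, write $\dim H_1^\zeta(W,M)=\dim C_1^\zeta(W,M)-\dim\mathrm{im}\partial_2^\zeta$ using the implicit assumption $\partial_1^\zeta=0$ (no relative $0$--cells). Substituting both into the identity, the two contributions $\dim\mathrm{im}\partial_2^\zeta$ cancel, leaving exactly
\[ \beta_1^\zeta(W)\ge\beta_1^\zeta(M)+\dim C_1^\zeta(W,M)-\dim C_2^\zeta(W,M). \]

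The hard part is not the exact-sequence chase itself but identifying and justifying the silent hypothesis $\partial_1^\zeta=0$ needed for the lower bound; this is automatic for the relative handlebody pairs of Theorems~\ref{thm:handles2} and~\ref{thm:handles}, in which all handles have index $\ge 1$, and that is the only setting in which the lemma will be used.
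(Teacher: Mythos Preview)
Your proof is correct and follows essentially the same route as the paper: split the long exact sequence of the pair into eigenspaces, use $H_0^\zeta(M)=0$ to truncate it, and read off the two bounds from the resulting identity. The paper first discards cells of dimension $\ge 3$ and then invokes an Euler-characteristic identity $\beta_1^\zeta(W,M)-\beta_2^\zeta(W,M)=\dim C_1^\zeta(W,M)-\dim C_2^\zeta(W,M)$, whereas you track $\dim\mathrm{im}\,\partial_2^\zeta$ directly and let it cancel; these are the same computation unpacked differently.

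You are right that both arguments tacitly use $C_0^\zeta(W,M)=0$ (the paper simply asserts ``$c_0^\zeta(W,M)=0$'' at the relevant step). Your observation that this holds in the only intended application---the relative handlebody pairs of Theorems~\ref{thm:handles2} and~\ref{thm:handles}, which have no $0$--handles---is exactly how the paper is using the lemma. Your reading of $\beta_1(W)$ as $\beta_1^\zeta(W)$ is also correct and matches the paper's own proof.
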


\begin{proof} Removing  cells of dimension $3$ or higher does not affect any of the terms    in the statement, so we can assume that $W$ is a $2$--complex.  In the proof, to simplify the presentation  we suppress the ``$\F$'' in notation for chain complexes, homology groups, and Betti numbers.

The group $H_0^\zeta(M) = 0$.  Thus, from the long exact sequence, we have
\[ 
H_2^\zeta(W, M) \to H_1^\zeta(M) \to  H_1^\zeta(W) \to H^\zeta_1(W, M) \to 0.
\]
From this it follows that 
\begin{equation}\label{eqn:inequality}
\beta_1^\zeta(W) = \beta_1^\zeta(W, M)   + \beta^\zeta_1(M) - \dim \big(\text{Image}(H_2^\zeta(W, M) \to H_1^\zeta(M)) \big).
\end{equation}

Since  $\beta_1^\zeta(W, M)  \le \dim( C_1^\zeta(W, M))$, the first inequality in the statement of the lemma is immediate.

We have  $\dim \big(\text{Image}(H_2^\zeta(W, M) \to H_1^\zeta(M)) \big) \le \beta_2^\zeta(W,M)$; substituting into Equation~\ref{eqn:inequality} yields
\[
\beta_1^\zeta(W) \ge \beta_1^\zeta(M) + \beta_1^\zeta(W,M) - \beta_2^\zeta(W,M).
\] 
We have that $c_0^\zeta(W,M) = 0$, so a standard Euler characteristic argument implies that $ \beta_1^\zeta(W,M) - \beta_2^\zeta(W,M) =  \dim\big(C_1^\zeta(W,M)\big) - \dim\big(C_2^\zeta(W,M)\big)$.
  Hence,
 \[\beta_1^\zeta(W) \ge \beta_1^\zeta(M) + \dim\big(C_1^\zeta(W,M)\big) - \dim\big(C_2^\zeta(W,M)\big),\] as desired.
\end{proof}

\begin{proof}[Proof of Theorem~\ref{thm:homology-bound}]  

To simplify notation, we let $W = W_n(\Sigma) \setminus \wSig$,  $\partial_0 W =  M_n(K_0)\setminus \wK_0$ and  $\partial_1 W =  M_n(K_1)\setminus \wK_1$. 

 The  $1$--handles  and   $2$--handles in the relative handlebody structure on $(W,\partial_0 W)$  are each freely permuted by the action of the generating deck transformation $T$.  That is,   for $i = 1$ and $i=2$  we have that  the  CW--chain complex $C_i(W,\partial_0 W, \F_p)$ splits as a $\F_p[\Z_n]$--module   into $c_i$ copies of $\F_p[T]/\left< 1 - T^n\right>$.  Each of these  splits into $n$ eigenspaces; letting $\xi$ be a primitive $n$--root of unity, 
\[
\F_p[T]/\left< 1 - T^n \right> \cong  \oplus_{i=0}^{n-1} \F_p[T] / \left< \xi^i - T\right>.
\]
We have that  $\zeta =  \xi^i$ for some $i$, so the $\zeta$--eigenspace of the relative CW--chain complex of $( W, \partial_0 W )$ has  $c_0$ generators in dimension 1 and $c_1$ generators in dimension 2.  That is, $\dim (C_i^\zeta(W, \partial_0 W, \F_p)) = c_i$.   The first inequality of Lemma~\ref{lemma:inequalities1} gives
\[  \beta_1^\zeta(  W, T,  \F_p)\le \beta_1(\partial_0 W,T,  \F_p ) +  c_0(\Sigma).
\]

We have a similar construction of  $W$ starting with $\partial_1 W = M_n(K_1) \setminus \wK_1$.  In this case, we have $\dim (C_1^\zeta(W, \partial_1 W) )= c_2$ and $\dim (C_2^\zeta(W, \partial_1 W) )= c_1$.  Using the second inequality in Lemma~\ref{lemma:inequalities1}, \[  \beta_1^\zeta (W, T, \F_p) \ge \beta_1^\zeta(\partial_1 W,T, \F_p ) +  c_2(\Sigma) -  c_1(\Sigma).
\]
Combining these, we see that
\[ \beta_1^\zeta (\partial_0 W, \F_p)  +  c_0(\Sigma) \ge   \beta_1^\zeta(\partial_1 W, \F) +  c_2(\Sigma) -   c_1(\Sigma) .
\]
Recall  that $c_1(\Sigma) = c_0(\Sigma)+ c_2(\Sigma)  +2g(\Sigma)$.  The previous inequality can be rewritten as
\[  \beta_1^\zeta(\partial_0 W ,T, \F_p) +  c_0(\Sigma) \ge    \beta_1(\partial_1 W,T, \F_p)  +  c_2(\Sigma) -   (   c_0(\Sigma)+ c_2(\Sigma)  +2g(\Sigma)).
\] 
This inequality simplifies to give 
\[
c_0(\Sigma) \ge \frac{\beta_1^\zeta(\partial_1 W,T,  \F_p)  - \beta_1^\zeta(\partial_0 W,T, \F_p)}{2} -  g(\Sigma).
\]
The proof is finished by noting that completing the covers to form the branched cyclic covers adds generators to the CW--complex that are all in the $1$--eigenspace and thus do not change the computation. 
\end{proof}

Early work~\cite{MR704925} studying ribbon     knots provided homological constraints on the structure of the number  the minimum number of index 1 critical points in a ribbon disk based on the homology of the 2--fold  branched covers.    The next theorem is a fairly simple generalization of such results.  Notice that we do not restrict to the ribbon situation, 2--fold covers,  or the case of $g=0$. 

\begin{corollary}\label{corollary:bound}  Under the conditions of Theorem~\ref{thm:homology-bound},
\[
c_0(\Sigma) \ge \frac{\beta_1( M_n(K_1), \F_p)  - \beta_1( M_n(K_0),\F_p)}{2(n-1)} -  g(\Sigma).
\]

\end{corollary}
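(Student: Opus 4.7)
The plan is to deduce the corollary from Theorem~\ref{thm:homology-bound} by summing the inequality over all $n$--th roots of unity in $\F_p$. The hypothesis $p - 1 \equiv 0 \mod n$ ensures that the multiplicative group $\F_p^{\times}$, which is cyclic of order $p-1$, contains exactly $n$ distinct solutions to $\zeta^n = 1$, so $\F_p$ itself (with no need to pass to an algebraic closure) supports the full eigenspace decomposition of the deck transformation.

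First I would observe that, since $\F_p[\Z_n]$ is semisimple when $\gcd(n,p) = 1$ (which holds here because $p \equiv 1 \mod n$ forces $p > n$, hence $p \nmid n$), the homology $H_1(M_n(K),\F_p)$ splits as the internal direct sum of its $\zeta$--eigenspaces as $\zeta$ ranges over the $n$--th roots of unity in $\F_p$. Consequently
\[
\beta_1(M_n(K),\F_p) = \sum_{\zeta^n = 1} \beta_1^\zeta(M_n(K),T,\F_p).
\]
By Lemma~\ref{lemma:1eigen}, the $\zeta = 1$ summand vanishes, so the sum ranges effectively over the $n-1$ values $\zeta \ne 1$ with $\zeta^n = 1$.

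Next I would apply Theorem~\ref{thm:homology-bound} to each such $\zeta$ individually, obtaining $n-1$ inequalities
\[
c_0(\Sigma) \ge \frac{\beta_1^\zeta( M_n(K_1),T,\F_p) - \beta_1^\zeta( M_n(K_0),T,\F_p)}{2} - g(\Sigma).
\]
Summing these $n-1$ inequalities and using the decomposition above to collapse the sums of eigenspace Betti numbers into the total Betti numbers, we obtain
\[
(n-1)\, c_0(\Sigma) \ge \frac{\beta_1(M_n(K_1),\F_p) - \beta_1(M_n(K_0),\F_p)}{2} - (n-1)\, g(\Sigma).
\]
Dividing by $n - 1$ gives the claimed bound.

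There is no substantive obstacle here; the corollary is a purely formal averaging consequence of Theorem~\ref{thm:homology-bound} once one verifies (a) the semisimplicity that guarantees the eigenspace decomposition over $\F_p$, and (b) the vanishing of the trivial eigenspace. Both points are already in place, the first by the arithmetic hypothesis on $p$ and the second by Lemma~\ref{lemma:1eigen}.
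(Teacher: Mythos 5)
Your proof is correct and follows exactly the route the paper intends: the paper's own proof is the single sentence ``The proof consists of summing over the $n-1$ eigenspaces,'' and you have simply spelled out the details (the eigenspace decomposition of $H_1(M_n(K),\F_p)$, the vanishing of the $1$--eigenspace via Lemma~\ref{lemma:1eigen}, and the averaging). One small imprecision worth noting: the paper allows $p$ to be a prime power, so the relevant coprimality condition for semisimplicity is that the \emph{characteristic} of $\F_p$ (not $p$ itself) is coprime to $n$; this still follows from $n \mid p-1$, but the phrase ``$p > n$, hence $p \nmid n$'' isn't quite the right justification when $p$ is a proper prime power.
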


\begin{proof} The proof consists of summing over the $n-1$  eigenspaces.
\end{proof}
\medskip

\begin{example} 
Let $P_k$ denote the pretzel knot $P( 2k+1, -2k-1, 2k+1)$. These are ribbon knots having Seifert form 
\[
\begin{pmatrix}  
0 & k \\
k+1  & 0 \\
\end{pmatrix}.
\]
Each bounds a ribbon disk with one saddle point and two minimum. We have $H_1(M_2(P_k)) \cong  \Z_{ 2k+1 } \oplus   \Z_{ 2k+1 }$.  

We want to consider the sets $\calg_g(nP_1, mP_2)$ and for convenience  assume that $n \ge m$.   This example presents the case of  $g=0$ and the next considers $g>0$.

Our first observation is that $aJ_k$ bounds a ribbon disk with $a$ saddle points and $a+1$ minima.   From this it is easily seen that there is a concordance $\Sigma$ from $nP_1$ to $mP_2$  with $c_0(\Sigma) = n$, $c_1(\Sigma) = n +m $, and $c_2(\Sigma) = m$.   That is,  $(n,m) \in  \calg_0(nP_3, mP_5)$.

Using $\Z_3$--coefficients in Theorem~\ref{thm:homology-bound}, we see that 
\[ c_0(\Sigma) \ge \frac{2n - 0}{2} -0 = n.
\]
Similar, working with $\Z_5$--coefficients we have $c_2(\Sigma) \ge m$.  Thus, $\calg_0(nP_3, mP_5)$ is precisely the quadrant with vertex $(n,m)$, that is $\calq(n,m)$.

\end{example}

\begin{example}\label{ex:genus-grow}  If $m >0$ and $n>0$,  then  by Proposition~\ref{prop:shift2} we have $\calq(n-1,m) \cup \calq(n, m-1) \subset \calg_1(nP_1, mP_2)$.  Here we show that this is a proper containment, that in fact,  $\calg_1(nP_1, mP_2) = \calq(n-1, m-1)$.  

The construction of a cobordism is simple.  In the initial concordance that we built, the local maxima were at levels below the local  minima.  Because of this, the concordance can be modified by replacing disk neighborhoods of a  maximum point and a minimum point by an annulus near an increasing path from the maximum to the minimum.  The effect is to decrease both $c_0$ and $c_2$ by 1  in exchange for increasing the genus by 1; that is $\calg_1(nP_3, mP_5) \subset \calq(n-1, m-1)$.     Theorem~\ref{thm:homology-bound} immediately  implies that this inclusion must be an equality.

The process can be repeated to prove that for   $g \le m$ we have $\calg_g(nP_1, mP_2) = \calq(n-g, m-g)$.

Finally, Proposition~\ref{prop:shift2} implies that for $m\le g \le n$ we have $\calg_g(nP_1, mP_2) = \calq(n-g, 0)$. For $g\ge n$ we have  $\calg_g(nP_1, mPJ_2) = \calq(0, 0)$.

Figure~\ref{fig:graphic2} illustrates the sets $\calg_g(4P_1, 2P_2)$.

\begin{figure}[h]
 \labellist
 \pinlabel {$g=0$} at 180 00
 \pinlabel {$g=1$} at 500 00
\pinlabel {$g=2$} at 900  00
\pinlabel {$g=3$} at 1250 00
\pinlabel {$g\ge 4$} at 1550 00
\endlabellist
\includegraphics[scale=.27]{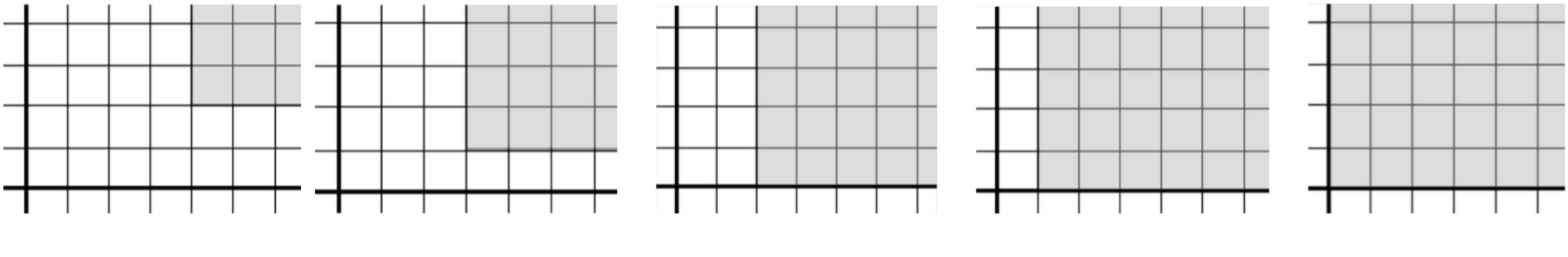} 
\caption{$\calg_g(4P_1, 2P_2)$.}
\label{fig:graphic2}
\end{figure}

\end{example}

\begin{example} Let  $K = 10_{153}$.  We consider cobordisms to the unknot.  For this knot $H_1(M_2(K)) = 0$ and $H_1(M_5(K) ) \cong  \Z_{11} \oplus \Z_{11}$.  Clearly Theorem~\ref{thm:homology-bound} and Corollary~\ref{corollary:bound} provide no information in the case of 2--fold covers.  Using  5--fold covers does.  

We first observe that there are precisely two nontrivial  $5$--eigenspaces in  $ \Z_{11} \oplus \Z_{11}$, each 1--dimensional, as  can be seen as follows.  Clearly there are at most two nontrivial eigenspaces.  Poincar\'e duality implies that if there is a $\zeta$--eigenvector, there is also a $\zeta^{-1}$--eigenvector; we present a proof in this in the  appendix  as Lemma~\ref{thm:eigenequal}. 

Using either eigenvalue, Theorem~\ref{thm:homology-bound} implies that for any cobordism from $nK$ to the unknot, we have $c_0(\Sigma) \ge  n -g$.    Using Corollary~\ref{corollary:bound} yields the weaker result that $c_0(\Sigma) \ge n/2 -g$.   The improvement by a factor of two is expected, since two of the eigenspaces are trivial and two have dimension 1.

\end{example}

\section{The infinite cyclic cover and the Alexander module}
It has been known that the rank of the Alexander module of a knot has an upper bound that is determined by the genus of a surface bounded by the knot in $B^4$ and the critical point structure of that surface.  We now generalize that observation, focusing on cobordisms.  

Recall that  $M_\infty(K_i) $ and  $W_\infty(\Sigma)$ represent the infinite cyclic covers of the complements of the $K_i$ and $\Sigma$.  In general, suppose we  have a finite CW--complex $X$ and a homomorphism  $\rho \co H_1(X) \to \Z$; then $\rho$  induces an infinite cyclic cover $\widetilde{X}_\rho$.    The group $H_1(\widetilde{X}_\rho, \Q)$ is a finitely generated module  over the PID  $\Q[t,t^{-1}]$.  We denote this module by $A(X, \rho, \Q[t,t^{-1}])$.  There is   splitting 
\[A(X, \rho, \Q[t,t^{-1}]) \cong 
\bigoplus_{j=1}^n \Q[t,t^{-1}]/\left< f_j(t)\right>,
\]  where   $f_j$ divides $f_{j+1} $ for $j<n$.   This splitting is unique and the value of $n$ is called the rank of the module.

\begin{definition} Let  $X$ be a space (or pair of spaces) supporting a map $\rho \co H_1(X) \to \Z$ with associated infinite cyclic cover $\widetilde{X}_\rho$. We denote by   $\beta_i(\wX_\rho, \rho, \Q[t,t^{-1}])$    the $\Q[t,t^{-1}]$--rank of $A(X, \rho , \Q[t,t^{-1}])$.   When $\rho$ is implicit, it is dropped from the notation.
\end{definition}

For the  complements of the $K_i$ and of $\Sigma$ there are canonical maps of the first homology to $\Z$, and thus we can suppress the ``$\rho$'' in our notation.  The infinite cyclic cover  $W_\infty(\Sigma)$ is built from the infinite cyclic cover    $M_\infty(K_0) $ by adding the lifts of $c_0$ 1--handles, followed by $c_1$ $2$--handles, and then $3$--handles.  There is a similar decomposition arising for $M_\infty(K_1)$.  The proof of Theorem~\ref{thm:homology-bound} carries over to this setting, yielding the following result.

\begin{theorem} \label{thm:homology-bound3} Suppose that $\Sigma$ is a cobordism from $K_1$ to $K_0$.    Then
\[
c_0(\Sigma) \ge \frac{\beta_1 (M_\infty( K_1) ,\Q[t,t^{-1}])  -\beta_1 (M_\infty(K_0) ,\Q[t,t^{-1}])}{2} -  g(\Sigma).
\]
\end{theorem}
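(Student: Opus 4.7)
The plan is to imitate the proof of Theorem~\ref{thm:homology-bound}, replacing finite-field $\zeta$--eigenspaces with the module structure over the PID $R \defeq \Q[t,t^{-1}]$ and $\F_p$--dimensions with $R$--ranks. Set $W = W_\infty(\Sigma)$, $\partial_0 W = M_\infty(K_0)$, and $\partial_1 W = M_\infty(K_1)$, all regarded as $R$--modules via the infinite cyclic deck transformation. Each of these covers is connected, so $H_0$ of each is the torsion module $R/\langle t-1\rangle$; its vanishing $R$--rank plays the role that $H_0^\zeta(M) = 0$ played in Lemma~\ref{lemma:inequalities1}.

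The first key step is to establish the $R$--rank analog of Lemma~\ref{lemma:inequalities1}: for a finite CW--pair $(W,M)$ with compatible $R$--action for which $\mathrm{rank}_R H_0(M;R) = 0$, one has
\begin{align*}
\beta_1(W;R) &\le \beta_1(M;R) + \dim_R C_1(W,M;R), \\
\beta_1(W;R) &\ge \beta_1(M;R) + \dim_R C_1(W,M;R) - \dim_R C_2(W,M;R),
\end{align*}
where $\beta_1(\cdot\,;R) = \mathrm{rank}_R H_1(\cdot\,;R)$. The first bound comes from the long exact sequence fragment $H_1(M;R) \to H_1(W;R) \to H_1(W,M;R) \to 0$ together with the obvious estimate $\mathrm{rank}_R H_1(W,M;R) \le \dim_R C_1(W,M;R)$, using additivity of $R$--rank on short exact sequences of finitely generated $R$--modules (valid because $R$ is a PID). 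For the second, I would first delete cells of dimension $\ge 3$ in the relative complex, which affects none of the listed quantities, so $(W,M)$ becomes a $2$--complex pair; then the Euler-characteristic identity for $R$--ranks gives $\mathrm{rank}_R H_1(W,M;R) - \mathrm{rank}_R H_2(W,M;R) = \dim_R C_1(W,M;R) - \dim_R C_2(W,M;R)$, which combined with the inequality $\beta_1(W;R) \ge \beta_1(M;R) + \mathrm{rank}_R H_1(W,M;R) - \mathrm{rank}_R H_2(W,M;R)$ completes the argument.

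The second key step is to feed in the handlebody data. By Theorem~\ref{thm:handles2} lifted to the infinite cyclic cover, $C_*(W, \partial_0 W; R)$ is free over $R$ of rank $c_i(\Sigma)$ in dimension $i+1$ for $i = 0,1,2$, because each critical point of $\Sigma$ contributes a single free $R$--orbit of lifted handles. Reversing the Morse function on $\Sigma$, $C_*(W, \partial_1 W; R)$ is free of ranks $c_2, c_1, c_0$ in dimensions $1,2,3$. Applying the first inequality of the lemma to $(W, \partial_0 W)$ gives $\beta_1(W;R) \le \beta_1(\partial_0 W;R) + c_0$; applying the second to $(W, \partial_1 W)$ gives $\beta_1(W;R) \ge \beta_1(\partial_1 W;R) + c_2 - c_1$. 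Combining and substituting $c_1 = c_0 + c_2 + 2g(\Sigma)$ rearranges to the claimed bound. The main obstacle I anticipate is the $R$--rank analog of Lemma~\ref{lemma:inequalities1}, but as outlined this is routine because $R$ is a PID, so the rank-additivity and Euler-characteristic manipulations from the finite-field proof transfer verbatim; no averaging over eigenspaces is required and no distinction between branched and unbranched covers enters, since the Alexander module is defined directly from the unbranched infinite cyclic cover.
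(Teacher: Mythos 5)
Your argument is correct and is exactly the route the paper intends: the author simply states that ``the proof of Theorem~\ref{thm:homology-bound} carries over to this setting,'' and your proposal fills in that claim faithfully. You correctly identify the two changes needed — replacing $\zeta$--eigenspace dimensions by $\Q[t,t^{-1}]$--ranks (with rank additivity on short exact sequences and the Euler characteristic identity holding because one can tensor the free chain complex with the flat module $\Q(t)$), and replacing the vanishing of $H_0^\zeta(M)$ by the vanishing of $\mathrm{rank}_R\, H_0(M;R)$ for the connected cover — and you correctly observe that the branched-versus-unbranched issue in the final step of the finite-cover proof disappears here because $M_\infty$ and $W_\infty$ are already the unbranched infinite cyclic covers.
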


This result can be strengthened by focusing on the direct sum decomposition of the module $A(X, \rho, \Q[t,t^{-1}])$ that corresponds to irreducible elements in  $\Q[t,t^{-1}]$.    For any irreducible polynomial $f$ we can set $\beta_i^f(\wX, \rho, \Q[t,t^{-1}])$ to be the rank of the $f$--primary summand of $A(X, \rho, \Q[t,t^{-1}])$. 
 The proof of the following result is much the same as that for the previous theorem.  (As an alternative, one can switch to the ring $\Q[t,t^{-1}]_{(f)}$, which denotes the localization at $f$, that is, the ring formed from  $\Q[t,t^{-1}]$ by adding a multiplicative inverse to all nontrivial elements $g$ that are relatively prime to $f$.  This is a PID with a unique prime, represented by $f$.)

\begin{theorem} \label{thm:homology-bound2} Suppose that $\Sigma$ is a genus $g$ cobordism from $K_1$ to $K_0$.    Then for any irreducible polynomial $f \in \Q[t,t^{-1}]$, 
\[
c_0(\Sigma) \ge \frac{\beta_1^f (M_\infty( K_1) ,\Q[t,t^{-1}])  -\beta_1^f (M_\infty( K_0) ,\Q[t,t^{-1}])}{2} -  g(\Sigma).
\]

\end{theorem}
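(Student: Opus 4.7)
The plan is to replay the proof of Theorem~\ref{thm:homology-bound}, substituting the infinite cyclic cover for the $n$--fold branched cover and the localization $R = \Q[t,t^{-1}]_{(f)}$ for the finite field $\F_p$. Since $f$ is irreducible in the PID $\Q[t,t^{-1}]$, the ring $R$ is a DVR, so any finitely generated $R$-module splits as $R^a \oplus \bigoplus_j R/\langle f^{n_j}\rangle$. After localizing, the invariant $\beta_1^f$ of the paper becomes the number of $f$-primary cyclic summands of $H_1(\cdot\,; R)$. Because for a knot the Alexander module $H_1(M_\infty(K);\Q[t,t^{-1}])$ is already torsion (annihilated by $\Delta_K$), its $R$-localization is pure torsion and its minimal number of $R$-generators coincides with $\beta_1^f$.

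First I would transcribe Lemma~\ref{lemma:inequalities1} into this setting. By Theorem~\ref{thm:handles2} applied to the infinite cyclic cover (the relative handlebody description of $(S^3\times[0,1]\setminus\Sigma,\ S^3\setminus K_0)$ lifts canonically), the relative $\Q[t,t^{-1}]$-chain complex $C_*(W_\infty(\Sigma)\setminus\wSig,\ M_\infty(K_0)\setminus\wK)$ is free with $c_i(\Sigma)$ generators in dimension $i+1$ for $i=0,1,2$, and symmetrically from the $K_1$ side. The hypothesis $\zeta\ne 1$ in Lemma~\ref{lemma:inequalities1} served only to kill $H_0^\zeta(M)$; here the analogous fact is that $H_0(M_\infty(K);\Q[t,t^{-1}])\cong \Q[t,t^{-1}]/\langle t-1\rangle$ has trivial $f$-primary part whenever $f\ne t-1$, while the exceptional case $f = t-1$ is handled by a transfer argument paralleling Lemma~\ref{lemma:1eigen} which makes both sides of the target inequality vanish. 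The long-exact-sequence manipulation then yields
\[
\beta_1^f(W_\infty(\Sigma)) \le \beta_1^f(M_\infty(K_0)) + c_0(\Sigma)
\]
and
\[
\beta_1^f(W_\infty(\Sigma)) \ge \beta_1^f(M_\infty(K_1)) + c_2(\Sigma) - c_1(\Sigma).
\]
Subtracting, substituting $c_1 = c_0 + c_2 + 2g$, and solving for $c_0(\Sigma)$ produces the stated bound. Passing from the complement to $W_\infty(\Sigma)$ itself only adds cells supported on the trivial deck action, contributing to the $(t-1)$-primary part, so nothing changes for $f\ne t-1$.

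The main obstacle will be verifying that ``$f$-primary rank'' behaves in the additive way needed for the long exact sequence argument. Over $\Q[t,t^{-1}]$ the $f$-primary functor on finitely generated modules is only left-exact, so naive counting fails. The resolution, as the author's parenthetical hint indicates, is to localize at $f$ at the outset: over the DVR $R$, the free $R$-rank and the $R$-torsion rank are each additive on short exact sequences of finitely generated modules (through the fraction field and the residue field $R/\langle f\rangle$, respectively), and these additivities are precisely the inputs used in Lemma~\ref{lemma:inequalities1}. Once this translation is in place, the remainder of the argument is a mechanical transcription and no new geometric input is required beyond the handlebody structure supplied by Theorem~\ref{thm:handles2}.
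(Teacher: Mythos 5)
Your overall strategy is exactly the route the paper itself sketches (it proves the result by carrying over the argument of Theorem~\ref{thm:homology-bound}, and explicitly offers the localization at $f$ as the clean algebraic framework), and most of your transcription is sound: the handlebody input from Theorem~\ref{thm:handles2}, the observation that the $\zeta\ne 1$ hypothesis in Lemma~\ref{lemma:inequalities1} is replaced by $f\ne t-1$ to kill $H_0$, and the final arithmetic are all right.

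There is, however, a genuine gap in the step you yourself flag as the main obstacle. You assert that over the DVR $R=\Q[t,t^{-1}]_{(f)}$ the ``$R$-torsion rank'' (the number of cyclic torsion summands, which is what the paper's $\beta_1^f$ counts) is additive on short exact sequences. This is false: for the short exact sequence
\[
0 \longrightarrow R/\langle f\rangle \longrightarrow R/\langle f^2\rangle \longrightarrow R/\langle f\rangle \longrightarrow 0
\]
the torsion ranks are $1,1,1$, not $1,2,1$. More generally, $-\otimes_R k$ with $k=R/\langle f\rangle$ is only right exact, so the minimal number of generators $\mu(N)=\dim_k(N\otimes_R k)$ is not additive on short exact sequences, and the dimension-counting identities in the proof of Lemma~\ref{lemma:inequalities1} (in particular Equation~(\ref{eqn:inequality}), which splits the long exact sequence into two short exact sequences and adds dimensions) do not transcribe to $R$-modules with this notion of rank. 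So the resolution you propose does not, as written, close the hole you correctly identified.

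The fix is to push one step further and work with coefficients in the field $k=\Q[t,t^{-1}]/\langle f\rangle$ from the outset, rather than with $R$-modules. The relative chain complexes of Theorem~\ref{thm:handles2} are free, so tensoring with $k$ is harmless; Lemma~\ref{lemma:inequalities1} then applies verbatim over the field $k$ (with no eigenspace decoration), using $H_0(M_\infty(K_i);k)=0$ because $t-1$ is a unit in $k$ when $f\ne t-1$. The universal coefficient theorem, together with the vanishing of $\operatorname{Tor}(H_0(M_\infty(K_i);\Q[t,t^{-1}]),k)$ for the same reason, then gives $\dim_k H_1(M_\infty(K_i);k)=\beta_1^f(M_\infty(K_i),\Q[t,t^{-1}])$, and the argument finishes as you outline. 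The case $f=t-1$ is indeed vacuous, as you note, since the Alexander module of a knot has trivial $(t-1)$-primary part.
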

The following corollary is immediate.
\begin{corollary} If knots $K$ and $J$ have nontrivial   Alexander polynomials with a pair of distinct irreducible factors, then for any cobordism $\Sigma$ from  from $nK$ to $mJ$ we have 
\[ c_0(\Sigma) \ge n/2 - g 
\]
and
\[ c_2(\Sigma) \ge m/2 - g. 
\]
\end{corollary}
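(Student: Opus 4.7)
The plan is to apply Theorem~\ref{thm:homology-bound2} twice with carefully chosen irreducible polynomials, using the fact that the Alexander module of a connected sum splits as a direct sum.

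By hypothesis, we may choose irreducible polynomials $f_1, f_2 \in \Q[t,t^{-1}]$ with $f_1 \mid \Delta_K$ but $f_1 \nmid \Delta_J$, and $f_2 \mid \Delta_J$ but $f_2 \nmid \Delta_K$. First I would record the standard fact that for any knots $K_1, K_2$,
\[
H_1\bigl(M_\infty(K_1 \cs K_2),\Q\bigr) \cong H_1\bigl(M_\infty(K_1),\Q\bigr) \oplus H_1\bigl(M_\infty(K_2),\Q\bigr)
\]
as $\Q[t,t^{-1}]$--modules; this follows from a Mayer--Vietoris computation applied to the decomposition of $S^3 \setminus (K_1 \cs K_2)$ coming from the connect sum sphere and lifts to the infinite cyclic cover. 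Consequently the $f$--primary rank is additive: $\beta_1^{f}(M_\infty(nK)) = n\,\beta_1^{f}(M_\infty(K))$ for every irreducible $f$.

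Next I would note that since $f_1 \mid \Delta_K$ and $\Delta_K$ is the product of the invariant factors of $A(K,\Q[t,t^{-1}])$, the $f_1$--primary summand of $A(K,\Q[t,t^{-1}])$ is nontrivial; hence $\beta_1^{f_1}(M_\infty(K)) \ge 1$, giving $\beta_1^{f_1}(M_\infty(nK)) \ge n$. Since $f_1 \nmid \Delta_J$, the $f_1$--primary summand of $A(J,\Q[t,t^{-1}])$ vanishes, so $\beta_1^{f_1}(M_\infty(mJ)) = 0$. Plugging into Theorem~\ref{thm:homology-bound2} with $f = f_1$ yields
\[
c_0(\Sigma) \ge \frac{n - 0}{2} - g = \frac{n}{2} - g.
\]

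For the bound on $c_2$, I would reverse the cobordism: turning $\Sigma$ upside down produces a cobordism $\overline{\Sigma}$ from $mJ$ to $nK$ of the same genus with $c_0(\overline{\Sigma}) = c_2(\Sigma)$ and $c_2(\overline{\Sigma}) = c_0(\Sigma)$. Applying the argument above to $\overline{\Sigma}$ with $f = f_2$ (swapping the roles of $K$ and $J$) gives $c_2(\Sigma) = c_0(\overline{\Sigma}) \ge m/2 - g$.

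The only nontrivial input beyond Theorem~\ref{thm:homology-bound2} is the connected sum decomposition of the Alexander module; once that is in hand everything else is bookkeeping, so I expect no serious obstacle.
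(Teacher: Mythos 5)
Your proof is correct and is exactly the elaboration the paper has in mind when it calls the corollary ``immediate'' after Theorem~\ref{thm:homology-bound2}: choose an irreducible factor $f_1$ of $\Delta_K$ not dividing $\Delta_J$, use additivity of the Alexander module under connected sum to get $\beta_1^{f_1}(M_\infty(nK))\ge n$ and $\beta_1^{f_1}(M_\infty(mJ))=0$, and apply the theorem; then reverse the cobordism (which swaps $c_0$ and $c_2$ and preserves $g$) and repeat with $f_2$. The paper gives no further details, so there is nothing to compare beyond noting that your reading of the hypothesis --- that each of $\Delta_K,\Delta_J$ has an irreducible factor not dividing the other --- is the one needed for the statement to be true, and your two inputs (connect-sum splitting of the Alexander module, cobordism reversal) are standard and used correctly.
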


For related results  in the case of ribbon concordances, see~\cite{MR4179701}.

\section{Knots $K(1, \alpha 6_1)$, $K(1, \beta 10_3)$, and their associated  metacycle covers.}\label{sec:lens}

A {\it metacyclic} invariant of a knot $K$, or of a surface $\Sigma \subset S^3 \times [0,1]$,  is one that is derived from a   cyclic cover of the branched cover of $K$ or $\Sigma$.  The use of such invariants in knot theory already appears in early work, such as Reidemeiser's 1932 book~\cites{MR717222,MR0345089}.  The role of such invariants in concordance first appeared   in the work of Casson and Gordon~\cite{MR900252}.  That paper, which introduced what is now called  Casson-Gordon theory,  was restricted to   2--bridge knots $B((2k+1)^2,2)$.  We will build our examples using the   2--bridge knots $B((2k+1)^2,2k)$.  The reason for the different choice is that Casson and Gordon were interested in showing that particular knots are not slice; we want to start with knots that are slice and explore their slice disks and concordances between them.

Our   examples are  built from   two knots from this family:  $K(1, U) = B(9, 2) = 6_1$ and $K(2,U) = B(25,4) = 10_3$, but further examples  are easily constructed.

Figure~\ref{fig:KkJ2} gave an  illustration of a knot $K(k, J)$.  For $J$ unknotted, this is $B((2k+1)^2, 2k)$.  
We can think of   $K(1,J)$ as being built from $B((2k+1)^2, 2k)$ by removing a neighborhood of a circle linking the right band in the Seifert surface shown in Figure~\ref{fig:KkJ2} (for which  the right band unknotted) and   replacing that  neighborhood    with the complement of the  knot $J$ in $S^3$.   The identification of the boundaries interchanges the meridian and longitude.  This creates a new knot in $S^3$, formed from $K_1(U)$ by tying the knot $J$ in a band on the Seifert surface, as desired.    We will focus on two specific examples: $K(1,\alpha 6_1)$ and $K(1,\beta 10_3)$, where $\alpha$ and $\beta$ are nonnegative integers.

\subsection{Ribbon disks for $K(k, J)$}

\begin{theorem}  If $J$ is ribbon and bounds a ribbon disk with $n$ minima, then $K(k,J)$ is ribbon, bounding a ribbon disk with $2n$ minima.
\end{theorem}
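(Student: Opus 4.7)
The plan is to build the desired ribbon disk for $K(k,J)$ explicitly in $B^4$ by gluing together a pair-of-pants cobordism derived from the Seifert surface of $K(k,J)$ and two disjoint parallel copies of the given ribbon disk $D_J$ for $J$. The key point is that the surface surgery on the Seifert surface along the distinguished curve $\gamma$ cannot be carried out inside $S^3$ because $\gamma$ is knotted, but it can be completed in $B^4$ using $D_J$.

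Take the obvious genus one Seifert surface $F$ of $K(k,J)$ pictured in Figure~\ref{fig:KkJ2} (a disk with two bands). Let $\gamma\subset F$ be the simple closed curve traversing each band once in opposite directions; by hypothesis $\gamma$ has knot type $J$ in $S^3$ and $F$--framing zero. Let $A$ be an open annular neighborhood of $\gamma$ in $F$, so that $P=F\setminus A$ is a pair of pants whose three boundary components are $K(k,J)$ together with two parallel push-offs $\gamma_+$ and $\gamma_-$ of $\gamma$, having linking number zero in $S^3$. Push $F$ into $B^4$ so that $\partial F=K(k,J)$ sits at height $1$ while $\partial A=\gamma_+\sqcup\gamma_-$ sits at some lower height $t_1$; for a generic such push, the height function on $P$ is Morse with exactly one index one critical point. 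This produces a single-saddle cobordism in $S^3\times[t_1,1]$ from $K(k,J)$ to the two-component link $\gamma_+\sqcup\gamma_-$.

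Each component of $\gamma_+\sqcup\gamma_-$ is isotopic to $J$. Because $D_J\subset B^4$ is a disk and $B^4$ is simply connected, the normal bundle of $D_J$ is trivial; pushing $D_J$ along a nowhere-zero section produces a disjoint parallel copy $D_J'$. The boundary framing induced by this section is the Seifert framing: any properly embedded surface in $B^4$ has vanishing self-intersection because $H_2(B^4,\partial B^4)=0$, so the boundary linking number of $\partial D_J$ and $\partial D_J'$ is zero. Hence $\partial D_J\sqcup\partial D_J'$ realizes the $0$--framed $2$--cable of $J$, which matches $\gamma_+\sqcup\gamma_-$ up to ambient isotopy of $S^3$. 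We arrange $D_J\sqcup D_J'$ inside $S^3\times[0,t_1]$ capping off $\gamma_+\sqcup\gamma_-$; each disk contributes $n$ minima and $n-1$ saddles.

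Gluing the pair-of-pants cobordism to the two disks yields a properly embedded surface $\Sigma\subset B^4$ with $\partial\Sigma=K(k,J)$ and Morse data
\[
c_0(\Sigma)=2n,\qquad c_1(\Sigma)=1+2(n-1)=2n-1,\qquad c_2(\Sigma)=0.
\]
Thus $\chi(\Sigma)=1$, so $\Sigma$ is a disk, and the absence of index two critical points identifies $\Sigma$ as a ribbon disk with $2n$ minima, as claimed. The principal subtlety is the identification of $\gamma_+\sqcup\gamma_-$ with the boundary of two parallel copies of $D_J$, which reduces to checking that the framing induced on $\partial D_J$ by the trivial normal bundle of the slice disk $D_J$ is the Seifert framing---an immediate consequence of the vanishing of self-intersection for properly embedded surfaces in $B^4$.
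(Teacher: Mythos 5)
Your proof is correct and takes essentially the same route as the paper: identify the curve $\gamma$ on the genus-one Seifert surface of $K(k,J)$ (framing zero, knot type $J$), decompose $F$ as a pair of pants $P$ plus an annular neighborhood $A$ of $\gamma$, and replace $A$ by two parallel copies of the ribbon disk $D_J$ in $B^4$. The paper states this in one sentence; you supply the Morse bookkeeping ($c_0=2n$, $c_1=2n-1$, $c_2=0$, Euler characteristic $1$) and the framing verification (the push-off of $D_J$ using a trivialization of its normal bundle has boundary the Seifert-framed parallel of $J$, matching the zero framing of $\gamma$ on $F$), which is worthwhile detail but not a different argument.
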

\begin{proof}
The knot $B((2k+1)^2, 2k)$ is ribbon: a simple closed curve $\gamma$ that passes over both bands of the Seifert surface once  is unknotted and has framing zero.    The ribbon disk has one index one  critical point  and two minima.  A ribbon disk for $K(1,J)$ is built by removing an annular neighborhood of $\gamma$ (on the Seifert surface for $K(k,J)$) and replacing it with a pair of ribbon disks for $J$.  
\end{proof}

\subsection{The 2--fold branched cover of $K(1,J)$}  An algorithm of Akbulut-Kirby~\cite{MR593626} provides a surgery diagram of the 2--fold branched cover of $K(1,J)$, as shown   on the left  in Figure~\ref{fig:meta1}; $M_2(K(1,J))$ is given as surgery on a two-component link, with one of the components unknotted and the other representing $J \cs J^r$, where $J^r$ denotes $J$ with its string orientation reversed.  Since all the knots $J$ we consider are reversible, we have left out the superscript ``$r$'' and do not orient the circles labeled with $J$.  Also, we can write $2J$ rather then $J \cs J^r$ when needed.  As describe in, for instance, ~\cite{MR0515288}, that surgery diagram can be modified to appears the     diagram on the right.  This  illustrates the 2--fold branched cover as formed from the lens space $L(9,2)$ by removing two parallel copies of a core circle and replacing each with a copy of the complement of $J$.  

\begin{figure}[h]
\labellist
 \pinlabel {\text{{$-2$}}} at 10 160
  \pinlabel {\text{ {$4$}}} at 230 160
    \pinlabel {\text{ {$J$}}} at 600 150
        \pinlabel {\text{ {$J$}}} at 600 75
          \pinlabel {\text{ {$J$}}} at 280 130
        \pinlabel {\text{ {$J$}}} at 280 55
              \pinlabel {\text{ {$9/2$}}} at 400 180
\endlabellist
\includegraphics[scale=.4]{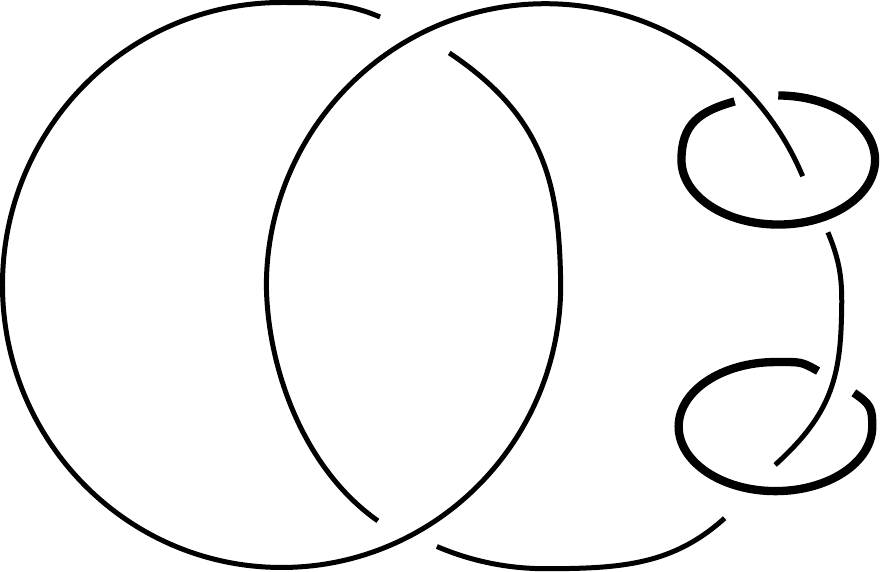} \hskip.9in \includegraphics[scale=.4]{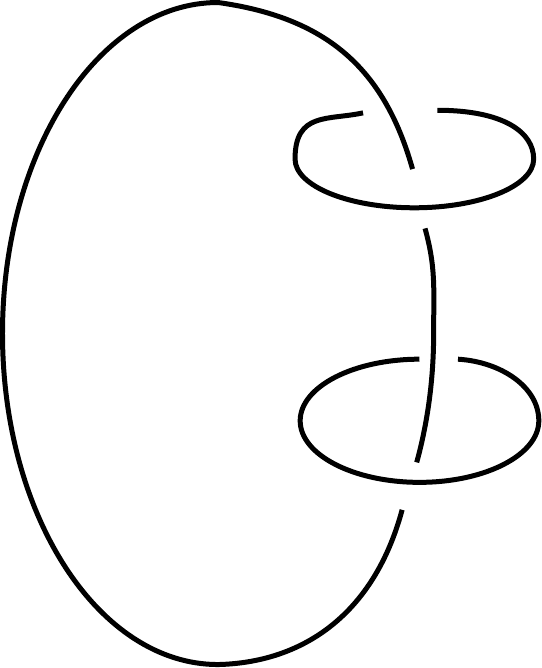} 
\caption{Branched cyclic cover of $K(1,J)$, where $J = J^r$.}
\label{fig:meta1}
\end{figure}

  As an immediate consequence,  we have the following.

\begin{theorem}  For all $J$, $H_1(M_2(K(1,J))) \cong \Z_9$.  \end{theorem}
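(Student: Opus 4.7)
The plan is to compute $H_1$ directly from the surgery diagrams in Figure~\ref{fig:meta1}. From the left-hand diagram, $M_2(K(1,J))$ is integral surgery on a two-component link in $S^3$ with framings $-2$ and $4$; one component is unknotted and the other is $J \cs J^r$. The first homology of any integrally surgered 3--manifold is presented by the linking matrix, whose entries are the framing coefficients on the diagonal and the pairwise linking numbers off the diagonal. Crucially, these entries depend only on the combinatorial/framing data of the diagram, not on the knot types of the individual surgery components. Tying $J$ into one of the components therefore does not change the presentation matrix of $H_1$, and so $H_1(M_2(K(1,J)))$ is independent of $J$.

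To pin down the group, I would specialize to $J = U$. Then $K(1,U) = 6_1$ and $M_2(6_1) \cong L(9,2)$, so $H_1 \cong \Z_9$. This forces the linking matrix
\[
A = \begin{pmatrix} -2 & \ell \\ \ell & 4 \end{pmatrix}
\]
to satisfy $|\det A| = 9$, whence $\ell = \pm 1$. Since the greatest common divisor of the entries of $A$ is $1$, the Smith normal form of $A$ is $\mathrm{diag}(1,9)$, confirming $H_1(M_2(K(1,J))) \cong \Z_9$ for every $J$.

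An alternative route runs through the right-hand diagram: $M_2(K(1,J))$ is obtained from $L(9,2)$ by removing open tubular neighborhoods of two parallel core circles and regluing two copies of $S^3 \setminus \nu(J)$, with the meridian-longitude roles swapped. Since $H_1(S^3 \setminus \nu(J)) \cong \Z$ is generated by the meridian $\mu_J$, with the longitude $\lambda_J$ null-homologous, the swap identifies the generator of $H_1$ of each knot complement with the boundary longitude that already generates $H_1$ of the corresponding removed solid torus. A short Mayer--Vietoris computation on the decomposition then shows the regluing leaves $H_1$ unchanged, so $H_1(M_2(K(1,J))) \cong H_1(L(9,2)) \cong \Z_9$.

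The only real obstacle is bookkeeping: confirming the gluing/framing conventions in the figure so that the base case $J = U$ does yield $L(9,2)$ (equivalently, that the off-diagonal linking number is $\pm 1$). Once that one combinatorial check is in hand, the independence of $H_1$ from $J$ is formal from either viewpoint.
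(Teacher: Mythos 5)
Your proof is correct and matches the paper's (implicit) reasoning: the paper treats the result as an immediate consequence of the surgery description in Figure~\ref{fig:meta1}, precisely because the linking matrix is insensitive to the knot tied in a surgery component, and because the right-hand diagram exhibits $M_2(K(1,J))$ as $L(9,2)$ with two core circles replaced by copies of the $J$-complement in a homologically transparent way. Your observation that the one nontrivial input is the combinatorial check that the off-diagonal linking number is $\pm 1$ (equivalently, that the base case $J=U$ gives $L(9,2)$) is exactly the right point to isolate.
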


\subsection{The homology of the  metacyclic cover of $K(1,J)$} 

It is evident that the  3--fold cyclic cover of $M_2(K(1,J))$ is built from the 3--fold cyclic cover of $L(9,2)$, which is the lens space $L(3,2)$,  by removing a pair of parallel core circles and replacing them with copies of $M_3(J) \setminus \widetilde{J}$.  This is illustrated in Figure~\ref{fig:meta2}.    We will thus need the following.    Let $\wM_2^3(K(1,J))$ denote the nontrivial  3--fold cyclic cover of $M_2(K(1,J))$.  

\begin{figure}[h]
\labellist
    \pinlabel {\text{ {$\widetilde{J}$}}} at 175 150
        \pinlabel {\text{ {$\widetilde{J}$}}} at 175 75
              \pinlabel {\text{ {$3/2$}}} at -08 170
\endlabellist
  \includegraphics[scale=.5]{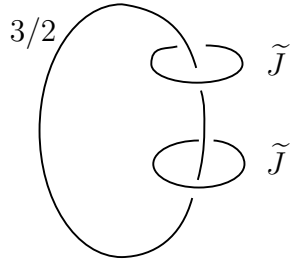} 
\caption{The $3$--fold cyclic cover of the 2--fold branched cover of $K(1,J)$.}
\label{fig:meta2}
\end{figure}

\begin{theorem}   There is an isomorphism $H_1(  \wM_2^3(K(1,J))) \cong \Z_{3} \oplus H_1(M_3(J))^2$. 
\end{theorem}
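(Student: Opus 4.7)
My plan is to apply Mayer--Vietoris to the three-piece decomposition
$\wM_2^3(K(1,J)) = V \cup_{T_1} \widetilde{E}_1 \cup_{T_2} \widetilde{E}_2$
suggested by Figure~\ref{fig:meta2}, where $V := L(3,2) \setminus (\nu(\widetilde{C}_1) \cup \nu(\widetilde{C}_2))$ is the complement of two parallel cores in the lens space $L(3,2)$, each $\widetilde{E}_i := M_3(J) \setminus \nu(\widetilde{J}_i)$, and the torus $T_i = \partial\nu(\widetilde{C}_i)$ is glued to $\partial \nu(\widetilde{J}_i)$ via the lift of the Akbulut--Kirby meridian/longitude swap from Figure~\ref{fig:meta1}; concretely this identifies $\mu_{\widetilde{C}_i}\leftrightarrow \lambda_{\widetilde{J}_i}$ and $\lambda_{\widetilde{C}_i}\leftrightarrow \mu_{\widetilde{J}_i}$.

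First I would compute $H_1(V)$. Writing $L(3,2) = V_1' \cup V_2'$ as a genus-one Heegaard splitting with $\widetilde{C}_1, \widetilde{C}_2$ placed as parallel cores of $V_1'$, the piece $V_1' \setminus (\nu(\widetilde{C}_1) \cup \nu(\widetilde{C}_2))$ is a pair-of-pants times $S^1$, with free abelian $H_1$ generated by the two meridians $\mu_1, \mu_2$ and the common longitude $\lambda$ (the two longitudes agree because $\widetilde{C}_1, \widetilde{C}_2$ cobound an annulus in $V_1'$). Gluing back $V_2'$ under the $L(3,2)$ Heegaard identification contributes the single relation $3\lambda - 2\mu_1 - 2\mu_2 = 0$, yielding $H_1(V) \cong \Z\langle \mu_1, \mu_2, \lambda\rangle/\langle 3\lambda - 2\mu_1 - 2\mu_2\rangle$.

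Next I would unpack $H_1(\widetilde{E}_i)$. A Seifert surface $F\subset S^3$ for $J$ lifts to a surface $\widetilde{F}\subset M_3(J)$ with $\partial \widetilde{F} = \widetilde{J}_i$, so $[\widetilde{J}_i] = 0$ in $H_1(M_3(J))$, and the Mayer--Vietoris sequence for $M_3(J) = \widetilde{E}_i \cup \nu(\widetilde{J}_i)$ yields the splitting $H_1(\widetilde{E}_i) \cong \Z\langle \mu_{\widetilde{J}_i}\rangle \oplus H_1(M_3(J))$. The critical consequence is that $\lambda_{\widetilde{J}_i}$, being the boundary of $\widetilde{F} \cap \widetilde{E}_i$, vanishes in $H_1(\widetilde{E}_i)$.

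To finish, I would run Mayer--Vietoris on the three-piece decomposition. Each gluing contributes (using $\lambda_{\widetilde{J}_i} = 0$) the identifications $\mu_i = 0$ and $\lambda = \mu_{\widetilde{J}_i}$ in $H_1(\wM_2^3(K(1,J)))$; substituting $\mu_1 = \mu_2 = 0$ into the $V$-relation leaves $3\lambda = 0$, while the two $H_1(M_3(J))$ summands persist independently, delivering $H_1(\wM_2^3(K(1,J))) \cong \Z_3 \oplus H_1(M_3(J))^2$. The main obstacle I anticipate is framing bookkeeping: verifying that the Akbulut--Kirby convention in Figure~\ref{fig:meta1} really identifies the band meridian with the Seifert longitude of $J$, so that $\lambda_{\widetilde{J}_i}$ genuinely bounds in $\widetilde{E}_i$. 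The already-stated identity $H_1(M_2(K(1,J))) \cong \Z_9$ serves as a useful consistency check: repeating the $H_1$ computation with an arbitrary band framing $f$ gives $H_1(M_2(K(1,J))) \cong \Z_{|9-4f|}$, which forces $f = 0$. Once this point is settled, the remainder is routine linear algebra.
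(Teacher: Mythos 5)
Your proof is correct and follows essentially the same route as the paper's: both are Mayer--Vietoris computations on a torus decomposition of $\wM_2^3(K(1,J))$, both rely on the observation that the lifted Seifert longitudes $\lambda_{\widetilde{J}_i}$ are null-homologous in $M_3(J)\setminus\widetilde{J}_i$, and both use the meridian/longitude swap to kill the core meridians and reduce the lens-space relation to $3\lambda = 0$ while leaving the two $H_1(M_3(J))$ summands untouched. The only cosmetic difference is that you precompute $H_1$ of the single piece $V = L(3,2)\setminus(\nu(\widetilde{C}_1)\cup\nu(\widetilde{C}_2))$ via a Heegaard splitting, whereas the paper splits $V$ further along the surgery torus into a solid torus and a three-component-link complement in $S^3$ and then runs a single four-piece Mayer--Vietoris, obtaining the identical presentation.
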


\begin{proof}  For any knot $J$, let $X_1$ and $X_2$ be copies of the $3$--fold cyclic cover of $S^3 \setminus J$.  We have $H_1(X_i) \cong \Z \oplus H_1(M_{3}(J) )$.  

The torus boundary of $X_i$ has natural boundary  curves, $m_i$, and $l_i$, lifts of the meridian and longitude of $J$.   The curve $m_i$ represents an element of infinite order in  $\Z \oplus H_1(M_3(J) )$,  and   after a change of basis represents $1 \oplus 0$. The curve $l_i$ is null-homologous in $X_i$, bounding a lift of a Seifert surface. 

In Figure~\ref{fig:meta2}   the curves $m_i$ and $l_i$ are attached to the longitude and meridian, respectively, of the curves labeled $\widetilde{J}_3$. (Notice that there is an interchange of meridian an longitude.)

One can now undertake a   Mayer-Vietoris argument.  The covering space is   split into four components by the  three evident tori in Figure~\ref{fig:meta2}, that is, the peripheral tori to the three curves illustrated.  As just described, two are related to the $3$--fold  covers of $J$, one is a solid torus with core $\gamma$ (corresponding to the $3/2$--surgery), and one is the compliment of the three component link that is illustrated, having homology generated by three meridians, which we denote $\alpha_0$, $\beta_1$ and $\beta_2$, corresponding to the $3/2$--surgery curves and the two $\widetilde{J}_{3}$.  We let $ T =  H_1(M_{3}(J) ) $.   Via the Mayer-Vietoris sequence, we see the homology is a quotient of 
\[
\big( \Z(\alpha) \oplus \Z(\beta_1) \oplus \Z(\beta_2)   \big) \oplus  \big( \Z(m_1) \oplus T \big) \oplus   \big( \Z(m_2) \oplus T\big) \oplus \Z(\gamma).
\]

The identification along the three tori, each with rank two first homology, introduces six relations.  Taking them in order, meridian first and initially along the surgery torus, yields the following, where we write $l_1$ despite it equaling 0, to make the gluing maps more evident:

\begin{itemize}
\item $\alpha = -2 \gamma$
\item $\beta_1 + \beta_2 = 3\gamma$ 
\item $\beta_1 = l_1$
\item $\alpha = m_1$
\item $\beta_2 = l_2$
\item $\alpha = m_2$.
\end{itemize}
None of these involve the summand $T \oplus T$ and so,   in effect,  they are relations defining a quotient of $\Z^6\cong  \left< \alpha, \beta_1, \beta_2, m_1, m_2, \gamma\right> $.  A simple exercise shows the quotient is isomorphic to  $\Z_{3}$, as desired.   In our case, we have either $T \cong (\Z_{7})^2 $ or $T \cong (\Z_{19})^2 $.
\end{proof}

To apply this result, we will use the following and its immediate corollary.

\begin{lemma}\label{lem:homcover}   $H_1(M_3(6_1)) \cong \Z_7 \oplus \Z_7$ and  $H_1(M_3(10_3)) \cong \Z_{19}      \oplus \Z_{19}$.\end{lemma}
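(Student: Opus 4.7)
The plan is to compute both homology groups via the Alexander module. Both knots in question are $2$--bridge: $6_1 = B(9,2) = K(1,U)$ and $10_3 = B(25,4) = K(2,U)$. The classical fact (see~\cite{MR0515288}) that $2$--bridge knots have cyclic Alexander module gives $H_1(M_\infty(K);\Z) \cong \Z[t,t^{-1}]/(\Delta_K(t))$. Combining this with the Wang sequence for the cover $M_\infty(K) \to X_n(K)$, together with the extra relation from Dehn filling along the lifted meridian to pass from $X_n(K)$ to $M_n(K)$, yields the presentation
\[
H_1(M_n(K);\Z) \cong \Z[t,t^{-1}]\big/\bigl(\Delta_K(t),\ 1+t+\cdots+t^{n-1}\bigr).
\]
As a quick sanity check with $n=2$, substituting $t=-1$ gives $\Delta_{6_1}(-1) = 9$ and $\Delta_{10_3}(-1) = 25$, matching the known determinants.

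Applying this formula with $n=3$ reduces the computation to understanding $\Delta_K(t)$ modulo $1+t+t^2$, where we may substitute $t^2 \equiv -1-t$. The genus--one Seifert surface of $K(k,U)$ shown in Figure~\ref{fig:KkJ2} has bands with $-k$ and $k+1$ full twists, and a routine determinant computation from the resulting $2 \times 2$ Seifert matrix yields $\Delta_{K(k,U)}(t) = -k(k+1)(t-1)^2 + t$. For $k=1$ this gives $\Delta_{6_1}(t) = 2 - 5t + 2t^2 \equiv -7t \pmod{1+t+t^2}$; for $k=2$ it gives $\Delta_{10_3}(t) = 6 - 13t + 6t^2 \equiv -19t \pmod{1+t+t^2}$. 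Since $t$ is a unit in $\Z[t]/(1+t+t^2)$ (with inverse $-(1+t)$), the ideal $(pt,\ 1+t+t^2)$ equals $(p,\ 1+t+t^2)$, and hence
\[
H_1(M_3(K);\Z) \cong \F_p[t]/(1+t+t^2),
\]
with $p=7$ for $6_1$ and $p=19$ for $10_3$.

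To conclude, both $7$ and $19$ are congruent to $1 \pmod 3$, so each $\F_p$ contains a primitive cube root of unity and $1+t+t^2$ splits into two distinct linear factors over $\F_p$. The Chinese Remainder Theorem then gives $\F_p[t]/(1+t+t^2) \cong \F_p \oplus \F_p$ as abelian groups, establishing the claimed isomorphisms. The main obstacle is justifying the presentation formula for $H_1(M_n(K);\Z)$, which is standard for knots with cyclic Alexander module but requires some care in the Wang sequence; as a fallback one could simply put the $6 \times 6$ integer presentation matrix for $H_1(M_3(K))$---assembled directly from the above Seifert matrix by the usual block--circulant recipe---into Smith normal form, which also transparently yields the stated group structure.
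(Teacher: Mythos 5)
Your proof is correct, but it takes a genuinely different route from the paper's. The paper computes $H_1(M_n(K(k,J)))$ in its appendix via Seifert's presentation matrix $\Gamma^n - (\Gamma - \mathrm{Id})^n$, where $\Gamma = (B^{\sf T}-B)^{-1}B^{\sf T}$; the resulting matrix is upper triangular with diagonal entries $(k+1)^n - k^n$ and $(-k)^n - (-k-1)^n$, and the paper then invokes Plans' theorem (the homology of an odd-fold cyclic branched cover is a ``double'') to conclude the off-diagonal entry may be taken to be zero, so the group is $\Z_d \oplus \Z_d$ with $d = (k+1)^n - k^n$; specializing to $n=3$, $k=1,2$ gives $7$ and $19$. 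You instead work with the cyclic Alexander module of a $2$--bridge knot, reduce $\Delta_K$ modulo $1+t+t^2$ to isolate $-7t$ and $-19t$, and then split $\F_p[t]/(1+t+t^2)$ by the Chinese Remainder Theorem using the primitive cube roots of unity in $\F_7$ and $\F_{19}$. Both arguments land correctly, and your presentation formula $H_1(M_n(K)) \cong \Z[t,t^{-1}]/(\Delta_K(t),\ 1+t+\cdots+t^{n-1})$ is the standard one (valid because $t-1$ acts invertibly, since $\Delta(1)=\pm 1$). The trade-off: your route pins down the module structure directly without appealing to Plans' theorem, and it simultaneously exhibits the eigenspace splitting that the paper uses later; but it relies on knowing the Alexander module is cyclic (classical for $2$--bridge knots, or checkable here directly since the genus--one presentation matrix $tA - A^{\sf T}$ has a unit entry). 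The paper's Seifert-formula route is more uniform across the family $K(k,J)$ and makes no assumption about the module structure going in, but needs Plans' theorem as a black box at the end. Your Smith-normal-form fallback is also sound and would sidestep both subtleties.
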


\begin{proof}  This is a standard knot theoretic computation; see, for instance~\cite{MR0515288}. More generally, as described in the appendix, one can readily show that for $q$ odd,  $H_1(M_q(B((2k+1)^2, 2k))) \cong \Z_{(k+1)^q - k^q} \oplus  \Z_{(k+1)^q - k^q} $.
\end{proof}

\begin{corollary} $H_1(\wM_2^3(K(1, 6_1)) \cong \Z^3 \oplus (\Z_7)^4$ and $H_1(\wM_2^3(K(1,10_3)) \cong \Z^3 \oplus (\Z_{19})^4$. 

\end{corollary}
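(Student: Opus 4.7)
My plan is to derive the corollary as a direct substitution into the preceding theorem and Lemma~\ref{lem:homcover}. The theorem just above gives a $J$--independent isomorphism $H_1(\wM_2^3(K(1,J))) \cong \Z_3 \oplus H_1(M_3(J))^2$, and the corollary simply records what this specializes to for $J = 6_1$ and $J = 10_3$. The entire proof will amount to reading off $H_1(M_3(J))$ from the lemma and plugging it into the theorem.

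For $J = 6_1$, I would invoke Lemma~\ref{lem:homcover}, which gives $H_1(M_3(6_1)) \cong \Z_7 \oplus \Z_7$. Substituting, $H_1(M_3(6_1))^2 \cong (\Z_7 \oplus \Z_7)^2 \cong (\Z_7)^4$, and combining with the theorem produces the stated formula in the first case.

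For $J = 10_3$, the same lemma yields $H_1(M_3(10_3)) \cong \Z_{19} \oplus \Z_{19}$, and an identical substitution produces $(\Z_{19})^4$ in the second summand, giving the stated formula in the second case.

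There is no substantial obstacle to overcome. All of the geometric and algebraic work has already been carried out in the preceding theorem (whose Mayer--Vietoris argument isolates the cyclic summand coming from the surgery torus together with the two copies of $H_1(M_3(J))$ coming from the two knot exteriors glued into $L(3,2)$) and in Lemma~\ref{lem:homcover} (whose computation of $H_1(M_3(B((2k+1)^2, 2k)))$ specializes for $k=1$ and $k=2$ to the groups named above). The corollary is therefore a one-line deduction; the only points to verify are the elementary identifications $(\Z_7 \oplus \Z_7)^2 \cong (\Z_7)^4$ and $(\Z_{19} \oplus \Z_{19})^2 \cong (\Z_{19})^4$.
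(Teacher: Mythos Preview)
Your approach is exactly what the paper intends: the corollary is labeled there as an ``immediate corollary'' with no separate proof, and the intended argument is precisely the substitution of Lemma~\ref{lem:homcover} into the preceding theorem that you describe. One small point: your derivation actually yields $\Z_3 \oplus (\Z_7)^4$ and $\Z_3 \oplus (\Z_{19})^4$, whereas the statement as printed reads $\Z^3$; this is evidently a typo in the paper (the covers in question are closed rational homology spheres and cannot have $\beta_1 = 3$), and you should flag it rather than simply asserting that your computation ``produces the stated formula.''
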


\subsection{The eigenvalue decomposition of $H_1(\wM_2^3(K(1, J)))$.} 

For any field $\F$, there  is an action of $\Z_3$ on $H_1(\wM_2^3(K(1, J)), \F)$.  In the case that $\F$ contains a primitive $3$--root of unity $\zeta$, the homology  $H_1(\wM(K(1, J)), \F)$ splits into eigenspaces, as described in Section~\ref{sec:equivariant}.  Note that $\F_7$ and $\F_{19}$ both contain such roots of unity.  When no confusion can result, we will use the same symbol $\zeta$ to denote a primitive  cube roots of unity in $\F_7$ and in $\F_{19}$.    

\begin{theorem} With the set-up described above:
\begin{itemize}
\item  $\beta_1^\zeta( \wM_2^3(K(1, \alpha 6_1)), \F_7) = 2\alpha$.
\item  $\beta_1^\zeta( \wM_2^3(K( 1, \alpha 6_1)), \F_{19}) =0$.
\item  $\beta_1^\zeta( \wM_2^3(K(1,  \beta 10_3)), \F_7) = 0$.
\item $ \beta_1^\zeta( \wM^3_2(K(1,  \beta 10_3)), \F_{19}) =2 \beta$.
\end{itemize}

\end{theorem}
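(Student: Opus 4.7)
The plan is to combine the previous theorem's explicit computation of $H_1(\wM_2^3(K(1,J)))$ with two general facts: the transfer-based vanishing of the $1$-eigenspace and the Poincar\'e-duality symmetry between the $\zeta$- and $\zeta^{-1}$-eigenspaces.

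First I would compute the total $\F_p$-dimension of $H_1(\wM_2^3(K(1,J)), \F_p)$ for $p = 7$ and $p = 19$. From the previous theorem, together with Lemma \ref{lem:homcover} and the additivity $H_1(M_n(K_1 \cs K_2)) \cong H_1(M_n(K_1)) \oplus H_1(M_n(K_2))$, we have
\[
H_1(\wM_2^3(K(1, \alpha 6_1))) \cong \Z_3 \oplus (\Z_7)^{4\alpha}, \qquad H_1(\wM_2^3(K(1, \beta 10_3))) \cong \Z_3 \oplus (\Z_{19})^{4\beta}.
\]
Reducing mod $p \in \{7,19\}$ kills the $\Z_3$ summand and kills the ``opposite'' torsion summand, making two of the four bulleted statements (those in which the total homology vanishes) immediate.

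For the remaining two cases, the total $\F_p$-dimension is $4\alpha$ (respectively $4\beta$), and I need to distribute this across the three eigenspaces. Since the $3$-fold cyclic cover $\wM_2^3(K(1,J)) \to M_2(K(1,J))$ is unbranched and $\gcd(3, p) = 1$, the transfer argument (the unbranched analogue of Lemma \ref{lemma:1eigen}) identifies the $1$-eigenspace of the deck action on $H_1(\wM_2^3, \F_p)$ with $H_1(M_2(K(1,J)), \F_p)$; the latter vanishes because $H_1(M_2(K(1,J))) \cong \Z_9$ has order coprime to $p$. Thus the $\zeta$- and $\zeta^{-1}$-eigenspaces together exhaust $H_1(\wM_2^3, \F_p)$, and by Lemma \ref{thm:eigenequal} they have equal dimension. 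Each therefore has dimension $2\alpha$ or $2\beta$, as required.

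The main obstacle is merely bookkeeping: verifying that the transfer argument and the Poincar\'e-duality identity, both stated in Section \ref{sec:equivariant} for branched covers of $S^3$, extend verbatim to the unbranched cyclic cover of the closed orientable $3$-manifold $M_2(K(1,J))$. Both extensions are immediate, since the transfer requires only a free deck action of order coprime to the characteristic, and the duality argument requires only a closed orientable $3$-manifold with an orientation-preserving $\Z_n$-action.
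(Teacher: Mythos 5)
Your proof is correct, and it takes a modestly different route than the paper's.  The paper's proof also reduces to the transfer-plus-duality argument, but it applies the two lemmas to the branched covering $M_3(J)\to S^3$ of a single building block and then assembles the answer via the structure theorem $H_1(\wM_2^3(K(1,J)))\cong\Z_3\oplus H_1(M_3(J))^2$; implicit in that route is the fact that the $\Z_3$-action on $\wM_2^3$ restricts to the deck action on each $M_3(J)\setminus\widetilde J$ piece, which the paper does not spell out.  You instead compute the total $\F_p$-dimension once from the structure theorem and apply transfer and duality directly to the free cyclic cover $\wM_2^3\to M_2(K(1,J))$, which sidesteps the equivariance bookkeeping in the Mayer--Vietoris decomposition at the cost of having to extend both lemmas beyond the branched-cover-of-$S^3$ setting.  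Your stated sufficient conditions for those extensions are slightly incomplete: for the duality step (Lemma~\ref{thm:eigenequal}) it is not enough that $\wM_2^3$ be a closed oriented $3$-manifold with a $\Z_3$-action; the lemma also uses that the relevant homology is an $\F_p$-vector space so that the linking form is $\F_p$-valued.  Here $H_1(\wM_2^3)\cong\Z_3\oplus(\Z_p)^{4\alpha}$ is not elementary abelian, so you should note that the linking form is orthogonal on $q$-primary components, hence restricts to a nonsingular $T_*$-equivariant $\F_p$-valued form on the $p$-primary part, which \emph{is} elementary abelian; the proof of Lemma~\ref{thm:eigenequal} then goes through verbatim on that summand.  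With this remark the argument is complete.
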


\begin{proof} Considering the $\F_7$--homology, we have   $H_1( \wM_2^3(K(1, \alpha 6_1)), \F_7) \cong (\F_7)^{2\alpha}$ arises entirely from the $2\alpha $ copies of $M_3(J) \setminus \widetilde{J}$ that appear in the covering space.   Thus the proof of the first statement comes down to analyzing the eigenspace splitting of the $Z_3$--action on $M_3(J) \cong \F_7 \oplus \F_7$.  We claim that the $1$--eigenspace is trivial and the $\zeta$--eigenspaces  and $\zeta^{-1}$--eigenspaces are 
both 1--dimensional.   This can be shown with an explicit computation, or one can argue abstractly, as follows.  A transfer argument, using the branched covering map $M_3(J) \to S^3$ shows that the 1--eigenspace is trivial.  Poincar\'e duality implies that the $\zeta$--eigenspace and $\zeta^{-1}$--eigenspace are isomorphic (see Lemma~\ref{thm:eigenequal} for a proof).

Similar arguments give the remaining statements.
\end{proof}

\subsection{Metacyclic covers of $nK(1, \alpha 6_1)$ and  $mK(1, \beta 10_3)$}

Let $\rho \co H_1(M(nK(1,J)) ) \to \Z_3$ be nonzero on $a$ of the natural $\Z_3$--summands and be $0$ on $(n-a)$ of the summands. We wish to understand the eigenspace decomposition of the homology of the associated cover.  This will be clarified by the following result concerning the lens space $L(9,2)$. It can be  proved with a simple construction and  should make the subsequent results evident.

\begin{lemma} Let $X = L(9,2)$.  Suppose that $\rho \co H_1(nX) \to \Z_3$ is nonzero on $a \ge 1$ of the natural $\Z_9$--summands.  Then the associated $3$--fold cover satisfies
\[
 \widetilde{nX}_3\cong aL(3,2) \cs 3(n-a) L(9,2) \cs 2(a-1) S^1 \times S^2.
\]
\end{lemma}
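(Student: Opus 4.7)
The plan is to decompose $nX$ along the connect-sum spheres, take the $3$--fold cover of each piece separately, and reassemble.  Write $nX$ as the union of a central piece $W \defeq S^3 \setminus (B_1 \sqcup \cdots \sqcup B_n)$ with $n$ punctured lens spaces $Y_i \cong L(9,2) \setminus B$, where $Y_i$ is glued to the $i$--th boundary sphere of $W$.  Under this decomposition the $i$--th $\Z_9$--summand of $H_1(nX)$ lies in $Y_i$, and the restriction $\rho_i \defeq \rho|_{H_1(Y_i)}$ is either zero or surjects onto $\Z_3$.

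Next, since $H_1(W) = 0$, the preimage of $W$ in $\widetilde{nX}_3$ is three disjoint copies $W^{(0)} \sqcup W^{(1)} \sqcup W^{(2)}$, each with $n$ boundary spheres, cyclically permuted by the deck transformation.  For each $Y_i$ with $\rho_i = 0$, the preimage of $Y_i$ is three disjoint copies of $L(9,2) \setminus B$, one attached to each $W^{(j)}$.  For each $Y_i$ with $\rho_i \ne 0$, the preimage is the connected cover coming from $\Z_9 \twoheadrightarrow \Z_3$, namely $L(3,2) \setminus 3B$; its three boundary spheres are cyclically permuted by the deck transformation and, by equivariance of the gluing, distribute one to each $W^{(j)}$.

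I would then absorb into each $W^{(j)}$ the $(n-a)$ attached copies of $L(9,2) \setminus B$ coming from trivially covered indices.  This produces three ``large'' pieces $N^{(j)} \cong \bigl((n-a)L(9,2)\bigr) \setminus aB$, each with $a$ remaining boundary spheres.  The cover is then assembled from the three $N^{(j)}$ together with $a$ ``small'' pieces $S_i \cong L(3,2) \setminus 3B$, arranged as a bipartite graph $G$ in which each $S_i$ has one boundary sphere glued to each $N^{(j)}$.

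Finally, I would invoke the standard assembly principle that identifying $S^2$ boundary components of a collection of compact connected $3$--manifolds pairwise according to a graph $G$ produces the connect sum of the filled-in closed pieces with $b_1(G)$ additional copies of $S^1 \times S^2$.  Here $G$ has $a + 3$ vertices and $3a$ edges, so $b_1(G) = 3a - (a+3) + 1 = 2(a-1)$; the filled-in closed pieces contribute $a$ copies of $L(3,2)$ and $3$ copies of $(n-a)L(9,2)$, giving the claimed diffeomorphism.  The main obstacle I anticipate is the equivariant bookkeeping in the second paragraph --- verifying that the three boundary spheres of each small piece really do pair with one sphere in each $W^{(j)}$ rather than accumulating on a single copy --- together with an induction to justify the graph-assembly formula, the key step being that self-identifying two $S^2$ boundary components of a connected $3$--manifold introduces exactly one $S^1 \times S^2$ summand.
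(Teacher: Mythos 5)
Your argument is correct and complete. The paper itself offers no proof for this lemma; it states only that ``It can be proved with a simple construction and should make the subsequent results evident,'' so there is nothing in the paper to compare against. Your proposal supplies the missing construction in a clean and verifiable form: the decomposition of $nX$ into the punctured $S^3$ piece $W$ and the $n$ punctured lens spaces $Y_i$, the observation that the preimage of $W$ splits into three copies $W^{(j)}$ cyclically permuted by the deck transformation, the identification of the cover of $Y_i$ as $3(L(9,2)\setminus B)$ or $L(3,2)\setminus 3B$ according to whether $\rho_i$ vanishes, and the equivariance argument that distributes the three boundary spheres of each $L(3,2)\setminus 3B$ one to each $W^{(j)}$. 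These are all sound, and the free-and-transitive action of $\Z_3$ on the three lifts of a boundary sphere (which forces a single $3$--cycle rather than a trivial permutation) is exactly the right justification for the distribution claim you flag as the main concern. The graph-assembly count is also right: $G$ is the complete bipartite graph $K_{3,a}$ on $a+3$ vertices with $3a$ edges, so $b_1(G)=2(a-1)$, and the spanning-tree induction you describe, with the key observation that each self-gluing of two $S^2$ boundary components of a connected piece contributes one $S^1\times S^2$ summand, yields precisely the claimed diffeomorphism $aL(3,2)\cs 3(n-a)L(9,2)\cs 2(a-1)(S^1\times S^2)$. Your write-up is in fact more explicit and more careful than the paper, which simply asserts the result.
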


\begin{theorem}$\ $

\noindent {\bf A.} Suppose that $\rho \co H_1(nM_2(K(1,\alpha 6_1))) \to \Z_3$ is nonzero on $a\ge 0$ of the natural $\Z_9$--summands.  Then

\begin{itemize}
\item   If $a \ge 1$, then $\beta_1^\zeta( \wM_2^3(nK(1, \alpha 6_1)), \rho, \F_7) = 2a\alpha +  a-1 $.
\item    If $a \ge 1$, $\beta_1^\zeta( \wM_2^3(nK(1, \alpha 6_1)), \rho, \F_{19}) =   a-1 $.
\item If $a=0$, then $\beta_1^\zeta( \wM_2^3(nK(1, \alpha 6_1)), \rho, \F_7) =  \beta_1^\zeta( M_2(nK(1, \alpha 6_1)), \rho, \F_{19}) =   0 $.
\end{itemize}

Similarly, 

\noindent {\bf B.} Suppose that $\rho \co H_1(nM_2(K(1,\beta 10_3)) \to \Z_3$ is nonzero on $a'\ge 0$ of the natural $\Z_9$--summands.  Then

\begin{itemize}
\item   If $a' \ge 1$, then $\beta_1^\zeta( \wM_2^3(nK(1, \beta 10_3)), \rho, \F_{19}) = 2a'\beta +  a'-1 $.
\item    If $a' \ge 1$, $\beta_1^\zeta( \wM^3_2(nK(1, \beta 10_3)), \rho, \F_{7}) =   a'-1 $.
\item If $a'=0$, then $\beta_1^\zeta( \wM^3_2(nK(1, \beta 10_3)), \rho, \F_{19}) =  \beta_1^\zeta( M_2(nK(1, \beta 10_3)), \rho, \F_{7}) =   0 $.
\end{itemize}

\end{theorem}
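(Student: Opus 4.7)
The strategy is to give an explicit equivariant connected-sum description of $\wM_2^3(nK(1,J))$ and then sum the $\zeta$--eigenspace contributions of each piece, most of which are already understood from the preceding subsections.

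First, I would refine the lens space lemma to track the $J$--knottings. Recall that $M_2(K(1,J))$ is built from $L(9,2)$ by drilling two parallel copies of a core curve and gluing in $J$--complements, with the longitude of each drilled circle (equivalently, the meridian of $J$) identified with a generator of $H_1(L(9,2)) = \Z_9$. A generator of the $i$--th $\Z_9$--summand of $H_1(M_2(nK(1,J)))$ is therefore represented by the corresponding core curve. Hence on each of the $a$ summands where $\rho$ is nontrivial, the meridians of the two drilled circles map nontrivially into $\Z_3$, and each $J$--complement lifts to a single copy of $M_3(J)\setminus\widetilde{J}$; on the $(n-a)$ trivial summands each $J$--complement lifts as three disjoint copies. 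Combining this with the lens space lemma yields the equivariant decomposition
\[
\wM_2^3(nK(1,J)) \;\cong\; a\,\wM_2^3(K(1,J)) \cs 3(n-a)\,M_2(K(1,J)) \cs 2(a-1)(S^1\times S^2).
\]

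Next, because each connect-sum sphere has trivial $\zeta$--eigenspace for $\zeta\neq 1$, the invariant $\beta_1^\zeta$ is additive across this decomposition. From the theorem at the end of the previous subsection, each copy of $\wM_2^3(K(1,\alpha 6_1))$ contributes $2\alpha$ to $\beta_1^\zeta(\,\cdot\,,\F_7)$ and $0$ to $\beta_1^\zeta(\,\cdot\,,\F_{19})$; each $M_2(K(1,J))$ contributes $0$ in either characteristic, since $H_1(M_2(K(1,J)))\cong\Z_9$ has trivial mod $7$-- and mod $19$--homology. Adding to these the $a-1$ from the $S^1\times S^2$ handles (discussed next) gives $\beta_1^\zeta = 2a\alpha + (a-1)$ in the $\F_7$--case and $a-1$ in the $\F_{19}$--case of Part~A. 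Part~B is identical, with the roles of the primes $7$ and $19$ (and of $\alpha$ and $\beta$) interchanged. The cases $a=0$ and $a'=0$ fall out immediately, since the cover is then the disjoint union of three copies of $M_2(nK(1,J))$, whose $\F_7$-- and $\F_{19}$--homology vanishes.

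The one step requiring real care is the $\zeta$--eigenspace contribution of the $2(a-1)$ extra handles. A standard transfer argument, using that $3$ is invertible in $\F_7$ and $\F_{19}$, identifies the $1$--eigenspace of the $\Z_3$--action on the $\F_p$--homology of $\wM_2^3(nK(1,J))$ with the $\F_p$--homology of the quotient $M_2(nK(1,J))$; since these extra handles are not present in the quotient, they contribute nothing to the $1$--eigenspace. Lemma~\ref{thm:eigenequal} then forces the $\zeta$-- and $\zeta^{-1}$--eigenspaces to share the remaining $2(a-1)$ dimensions equally, giving $a-1$ each. The principal obstacle in the whole argument is therefore the first step---justifying the explicit equivariant connected-sum description, and in particular confirming that the extra $S^1\times S^2$ handles appear in the cover with the deck transformation acting on them as claimed.
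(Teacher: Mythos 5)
Your argument is correct in its conclusions and follows essentially the same route as the paper: both proofs rest on the equivariant connected-sum description of $\wM_2^3(nK(1,J))$ coming from the lens-space lemma and the preceding $\beta_1^\zeta$--computations, with the only subtle point being the $(a-1)$ contribution from the $S^1 \times S^2$ summands. Where you differ is in how you justify that contribution, and there your citation of Lemma~\ref{thm:eigenequal} is not quite right: that lemma is stated for $H_1(M_n(K))$ under the hypothesis $H_1(M_n(K)) \cong \F_p^k$, and its proof uses the nonsingular $\F_p$--valued linking form of a rational homology sphere. The metacyclic cover $\wM_2^3(nK(1,J))$ is not a rational homology sphere---its first homology has free rank $2(a-1)$ precisely because of those handles---so the lemma as stated does not apply. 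The conclusion you want (that the $\zeta$-- and $\zeta^{-1}$--eigenspaces split the $2(a-1)$ dimensions evenly) is true, but it needs either an extension of the duality argument to closed $3$--manifolds with $b_1>0$ or, more simply, the paper's own direct computation: in the lens-space model $L(mn,q)\cs L(mn,q)$, the $(n-1)$ copies of $S^1\times S^2$ contribute an $\F^{n-1}$ whose $\Z_n$--module structure is the reduced regular representation (the paper writes $\F[\Z_n]/\langle 1-t\rangle$), which visibly decomposes as one $1$--dimensional eigenspace for each nontrivial $n$--root of unity. Your transfer argument correctly kills the $1$--eigenspace but does not by itself establish the balance between $\zeta$ and $\zeta^{-1}$; replacing the appeal to Lemma~\ref{thm:eigenequal} with this explicit module identification closes the gap and brings your proof into agreement with the paper's.
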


\begin{proof}  Most of the terms that appear in the statements are evident from the construction, with perhaps one exception.  In the first formula there is the term $a-1$ which arises from the $S^1 \times S^2$ summands.  To clarify this, we will consider the case of $a=2$ and the more general situation of $L(mn, q)$ with the homomorphism $\rho$ mapping onto $\Z_n$ on both factors.  Then the $n$--fold cyclic cover is $L(m,q) \cs L(m,q) \cs (n-1) S^1 \times S^2$.  The homology with $\F$ coefficients has a summand $\F^{n-1}$.   As a $\Z_n$--module this is $\F[\Z_n]/\left< 1-t \right>$.  In the case that $\F$ contains a primitive $n$--root of unity, this splits into $(n-1)$   eigenspaces of dimension 1. 
\end{proof}


\section{Cobordisms between  $nK(1, \alpha 6_1)$ and $mK(1,\beta 10_3))$.}\label{sec:6_1}

To simplify the discussion, we will assume that  $n \ge m > 0$.  Let  $\Sigma$ be a genus $g$ cobordism  from  $nK(1,\alpha 6_1)$ and $mK(1,\beta 10_3))$.  We continue to denote  the 2--fold  cover of $S^3 \times [0,1]$ branched over $\Sigma$  by $W_2(\Sigma)$; this is a cobordism from   $M_2(nK(1, \alpha 6_1))$ to $M_2(mK(1, \beta 10_3))$.  

\subsection{Extending homomorphisms from $H_1(M_2(nK(1, \alpha 6_1)))$ to $H_1(W_2(\Sigma))$.}

 We have that the  $M_2(nK(1, \alpha 6_1))$ and $M_2(nK(1,\beta 10_3))$ are $\Q$--homology spheres and thus there are $\Q/\Z$--valued non-singular symmetric  linking forms on  each one.  For any $\Q$--homology sphere $M$, the linking form provides  an  identification of   $H_1(M)$ with $\hom (H_1(M), \Q/\Z)$.  We remind the reader that a {\it metabolizer}  for such a linking form on an abelian group of order $l^2$ is a subgroup of order $l$ on which the linking form is identically 0.  

The following result is an immediate consequence of a theorem of Gilmer in~\cite{MR656619}.  In summary, suppose that $K$ bounds a genus $g$ surface in $B^4$.  Then according to~\cite[Lemma 1]{MR656619}, the homology group $H_1(M_2(K))$  splits as a direct sum $A \oplus B$, where $B$ has a presentation of size $2g \times 2g$ and the linking form on $A$ is metabolic; notice that this implies that if $B \cong \Z_n^k$, then $k \le 2g$.  
Denote by $\phi$ the restriction map  
\[
\phi \co \hom  \big(H_1(W_2(\Sigma)), \Q/\Z\big) \to \hom \big(H_1(nM_2(K_1)) \oplus H_1(-M_2(mM_2(K_0)), \Q/\Z\big).
\] 

\begin{theorem} \label{thm:gilmer3}Suppose that $n+m \ge 2g$ and recall there is an    isomorphism $H_1( nM_2(K_1))  \oplus H_1(-mM_2( K_0)) \cong (\Z_9)^{n} \oplus  (\Z_9)^{m} $.   For some $\epsilon \ge 0$, the linking form on this group splits off a summand    that is isomorphic to  $(\Z_{9})^{n + m - 2g + \epsilon}$ which  contains a metabolizer $\calm \subset H$, all elements of which    are in the image of $\phi$.  In particular, the order of $\calm$ is at least $3^{n+m-2g}$.
\end{theorem}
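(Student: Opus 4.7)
The strategy is to adapt the argument of Gilmer~\cite[Lemma 1]{MR656619}, which treats a surface bounded in $B^4$, to the two-ended cobordism setting by working directly with $W = W_2(\Sigma)$. This $W$ is a compact oriented $4$-manifold with boundary $M_2(nK_1) \sqcup -M_2(mK_0)$, a disjoint union of rational homology $3$-spheres with $H_1(\partial W) \cong (\Z_9)^n \oplus (\Z_9)^m$ and with linking form equal to the orthogonal sum of the two pieces. The plan is to produce an isotropic subgroup $N \subset H_1(\partial W)$ that corresponds under linking-form duality to the image of $\phi$, and then package it into the required splitting.

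The first stage is to bound the ``excess'' rational homology of $W$ in terms of $g$. Theorem~\ref{thm:handles} with $n=2$ says $(W, M_2(K_0))$ is built with $2c_0$ $1$-handles, $2c_1$ $2$-handles, and $2c_2 + 2g$ $3$-handles; read from the opposite end, $(W, M_2(K_1))$ has $2c_2 + 2g$ $1$-handles, $2c_1$ $2$-handles, and $2c_0$ $3$-handles. Combining the long exact sequences of these two pairs with $H_1(M_2(K_i);\Q)=0$ gives a bound $\beta_1(W;\Q) \le 2g + (\text{terms that cancel against } \beta_2)$, quantifying the failure of $W$ to be a rational homology cobordism by a term linear in $g$.

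The second stage is the standard duality argument: for any compact oriented $4$-manifold $W$ with rational homology $3$-sphere boundary, the subgroup
\[
N \;=\; \ker\bigl(H_1(\partial W) \to H_1(W)\bigr)
\]
is isotropic for the linking form on $H_1(\partial W)$, up to a correction controlled by the torsion of $H_1(W)$. This is a consequence of Poincar\'e--Lefschetz duality: the linking pairing of boundary classes that bound in $W$ is computed by the intersection of bounding $2$-chains, which vanishes when the chains come from absolute classes in $H_2(W)$. The linking form identifies $\hom(H_1(\partial W),\Q/\Z)$ with $H_1(\partial W)$ and sends the image of $\phi$ onto $N$. Combining this identification with the handle bound from the first stage and a counting argument through the long exact sequence yields $|N| \ge 3^{n+m-2g}$.

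Finally, since $H_1(\partial W) \cong (\Z_9)^{n+m}$ is a homogeneous $\Z_9$-module, any isotropic subgroup of order at least $3^{n+m-2g}$ lies in a direct summand $H \cong (\Z_9)^{n+m-2g+\epsilon}$ (for some minimal $\epsilon \ge 0$) on which the restricted linking form is nondegenerate and inside which $\calm = N \cap H$ is automatically a full metabolizer, giving the statement. The main obstacle I expect is the second stage: Gilmer's torsion bookkeeping in~\cite[Lemma 1]{MR656619} is carried out for a single boundary component, and verifying that the $2g$ excess handles produce the expected factor of $3^{2g}$ in the metabolizer count---rather than some weaker constant---when the two ends contribute independently is the delicate point. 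The parameter $\epsilon$ is precisely what absorbs any residual mismatch between the naive half-dimensional count and the actual $\Z_9$-module refinement.
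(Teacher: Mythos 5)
The paper does not re-derive this result; it invokes it as ``an immediate consequence'' of Gilmer~\cite[Lemma 1]{MR656619}, reducing the cobordism case to the $B^4$ case by converting the genus $g$ cobordism from $nK_1$ to $mK_0$ into a genus $g$ surface in $B^4$ bounded by $nK_1 \mathbin{\#} -mK_0$, whose double branched cover has $H_1 \cong (\Z_9)^{n+m}$, and then quoting Gilmer's splitting $A\oplus B$ with $A$ metabolic and $B$ presented by a $2g\times 2g$ matrix. Your proposal instead attempts a re-derivation that works directly with $W = W_2(\Sigma)$ and the kernel $N = \ker(H_1(\partial W) \to H_1(W))$. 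That is a legitimately different route, and you correctly identify that the danger lies in the middle stage; but the obstruction is more fundamental than the ``torsion bookkeeping'' you flag.

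The specific gap is in your second stage. The claim that $N = \ker\bigl(H_1(\partial W) \to H_1(W)\bigr)$ is isotropic ``up to a correction controlled by the torsion of $H_1(W)$'' is false once $H_2(W;\Q)$ is nontrivial, which it certainly is here since $\Sigma$ has genus $g>0$. A concrete counterexample: let $W$ be the $D^2$-bundle over $S^2$ with Euler number $p$, so $\partial W = L(p,1)$ and $H_1(W) = 0$; then $N = H_1(L(p,1)) = \Z_p$ in its entirety, yet the linking form $\pm xy/p$ on $\Z_p$ is nondegenerate, so $N$ is about as far from isotropic as possible, and $H_1(W)$ has no torsion to blame. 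The failure is governed by the intersection form on $H_2(W)$: if $x,y\in N$ bound $2$-chains $F_x, F_y \subset W$, then $\mathrm{lk}(x,y)$ differs from $0$ by a term involving the pairing of a closed $2$-cycle $nF_x - c$ (with $c$ a bounding chain in $\partial W$) against $F_y$, and this pairing does not vanish when $H_2(W;\Q)\neq 0$. Gilmer's Lemma~1 handles exactly this: rather than asserting that the kernel is a metabolizer, it produces the direct sum decomposition $A\oplus B$ with $B$ sized by $\beta_2$-type data and $A$ metabolic, which is a genuinely different structure from ``$N$ is isotropic.'' Your fourth stage inherits this problem, since it feeds the isotropy of $N$ into the direct-summand argument. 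To repair the proof you would need to replace stage two by the actual mechanism of Gilmer's lemma (or reduce to it via the $nK_1\mathbin{\#}-mK_0$ surface in $B^4$, as the paper does), rather than by the rational-homology-ball duality argument, which simply does not apply when $\Sigma$ has genus.
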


To apply this result, we clearly need to have $n +  m > 2g$.  To simplify our considerations, we will assume that $n>2g$. 

\begin{corollary} Suppose that $\Sigma$ is a genus $g$ cobordism from $nK(1, \alpha 6_1)$ to $mK(1, \beta 10_3)$ and assume that  $n > 2g$.  Then there is a surjective homomorphism $\rho  \co H_1(M_2(nK(1, 6_1))) \to \Z_3$ that extends to a homomorphism $ {\rho'}\co H_1(W_2(\Sigma)) \to  \Z_{3^b}$ for some $b$.

\end{corollary}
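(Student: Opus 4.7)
The plan is to obtain $\rho$ by restricting an element of the metabolizer $\calm$ supplied by Theorem~\ref{thm:gilmer3}, and then to obtain $\rho'$ by passing to the $3$--primary summand of the extension over $W_2(\Sigma)$ furnished by that same theorem.

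First, apply Theorem~\ref{thm:gilmer3}. Since $n > 2g$ certainly implies $n + m \ge 2g$, we obtain a metabolizer $\calm$ inside the $(\Z_9)^{n+m-2g+\epsilon}$ summand of
\[
H \defeq H_1(nM_2(K(1,\alpha 6_1))) \oplus H_1(-mM_2(K(1,\beta 10_3))) \cong (\Z_9)^n \oplus (\Z_9)^m,
\]
with $|\calm| \ge 3^{n+m-2g}$, and each $\psi \in \calm$, viewed as a homomorphism $H \to \Q/\Z$ via the linking form, extends to $\widetilde\psi \co H_1(W_2(\Sigma)) \to \Q/\Z$.

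Second, show that the projection $p_1 \co H \to H_1(nM_2(K(1,\alpha 6_1)))$ restricts to a nontrivial map on $\calm$. Its kernel $\calm \cap H_1(-mM_2(K(1,\beta 10_3)))$ pairs trivially with the $H_1(nM_2(K(1,\alpha 6_1)))$--summand under the full linking form, so this intersection is self-annihilating for the nondegenerate linking form on $H_1(-mM_2(K(1,\beta 10_3))) \cong (\Z_9)^m$. Any such self-annihilating subgroup has order at most $3^m$, so
\[
|p_1(\calm)| \ge |\calm|/3^m \ge 3^{n-2g} \ge 3
\]
whenever $n > 2g$. Choose $\psi_0 \in \calm$ with $p_1(\psi_0) \ne 0$, and replace $\psi_0$ by $3\psi_0 \in \calm$ if $p_1(\psi_0)$ has order $9$ in $(\Z_9)^n$; the resulting $\psi \in \calm$ has $p_1(\psi)$ of order exactly $3$, so $\psi$ restricted to $H_1(M_2(nK(1,\alpha 6_1)))$ takes values in the $3$--torsion subgroup $\Z_3 \subset \Q/\Z$.

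Third, define $\rho$ and $\rho'$. Set $\rho \defeq \psi|_{H_1(M_2(nK(1,\alpha 6_1)))} \co H_1(M_2(nK(1,\alpha 6_1))) \to \Z_3$; this is nonzero, hence surjective. The extension $\widetilde\psi$ has finite cyclic image in $\Q/\Z$, of some order $N$. Writing $N = 3^b N'$ with $\gcd(N',3)=1$, the image splits canonically as $\Z_{3^b} \oplus \Z_{N'}$; let $\rho' \co H_1(W_2(\Sigma)) \to \Z_{3^b}$ be the composition of $\widetilde\psi$ with projection onto the $3$--primary summand. Because $\psi|_{H_1(M_2(nK(1,\alpha 6_1)))}$ already takes values in the $3$--torsion subgroup of $\Q/\Z$, that projection acts as the identity on the restriction, so $\rho'$ restricts to $\rho$ via the natural inclusion $\Z_3 \hookrightarrow \Z_{3^b}$.

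The delicate step is the second one. A naive cardinality estimate gives only
\[
|p_1(\calm)| \ge |\calm|/|H_1(-mM_2(K(1,\beta 10_3)))| = 3^{n-m-2g},
\]
which requires the stronger hypothesis $n > m + 2g$. Exploiting the self-annihilation property of $\calm$, rather than its mere size, is what allows the proof to go through under the hypothesis $n > 2g$.
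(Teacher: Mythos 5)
Your proof is correct and takes essentially the same approach as the paper: invoke Theorem~\ref{thm:gilmer3} to obtain the metabolizer $\calm$, use the self-annihilation property (rather than a naive cardinality count) to show some element of $\calm$ restricts nontrivially to the $H_1(nM_2(K(1,\alpha 6_1)))$ factor, scale by $3$ to get order exactly $3$, and pass to the $3$--primary part of the image of the extension to land in $\Z_{3^b}$. You have merely made explicit the step the paper compresses into the phrase ``the order of a metabolizer for $H_1(-M_2(mK_0))$ is $3^m$.''
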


\begin{proof} Theorem~\ref{thm:gilmer3} provides a set $\calm$ of homomorphisms   
$   H_1(nM_2(K_1)) \oplus H_1(-M_2(mK_0) )  \to \Q/\Z$
that extend  to   homomorphisms $\rho'$ on $H_1(W_2(\Sigma) )$.   The order of $\calm$ is $3^{n+m -2g}$ and the  order of a metabolizer for $  H_1(-M_2(mK_0))$ is $3^{m}$.  It follows that if $3^{n+m -2g} > 3^m$, then some element in $\calm$ is not contained in  $0  \oplus H_1(-M_2(mK_0))$ and thus must be nontrivial on  $ H_1( M_2(mK_1))$.  This will occur as long as  $n > 2g$.  Call one such element $\rho$ and let $\rho'$ denote an extension of $\rho$  to $H_1(W_2(\Sigma) )$.   

The image of $\rho'$ is a finite cyclic subgroup $G \subset \Q/\Z$.  Projecting $G$ to its $3$--primary summand does not change its restriction to the boundary, so we can assume that  $\rho'$ takes values in $\Z_{3^b}$ for some $b$.    If $\rho$ is not of order $3$, then it can be multipled by $3$ so that it does have order 3.
\end{proof}

\subsection{The $3^b$--fold cyclic cover of $W(\Sigma)$.}

Let $\pi\co \wW_2^3(\Sigma) \to W_2(\Sigma)$ denote the $3^b$--fold cyclic cover of $W_2(\Sigma)$ associated to the homomorphism $\rho'$ defined above.   We let $\partial_1(\wW_2^3) = \pi^{-1}(  M_2(nK(1, \alpha 6_1))) $ and $\partial_0(\wW_2^3) = \pi^{-1}(  M_2(nK(1, \alpha 10_3))) $.  

We can now apply Theorem~\ref{thm:equivcount}.  Let $\zeta$ be a primitive  3--root of unity and consider the $\Z_3$--action on $\partial_1(\wW_2^3)$, the $3^{b-1}$ power of order $3^b$ deck transformation, which we denote by $T = S^{3^{b-1}}$.  

\begin{theorem}\label{thm:cover2}   Assume that the restriction $\rho \co M_2(nK(1, \alpha 6_1)) \to \Z_3$ is nonzero on $a \ge 1$ of the $n$ summands.  Also suppose that the restriction is nonzero on $a' \ge 0$ of the $m$ summands of $H_1(M_2(mK(\beta 10_3))$.

\begin{itemize}
\item  $\beta_1^\zeta( \partial_1(\wW_2^3),T,  \F_7 )= 3^{b-1} \beta_1^\zeta(\wM_2^3(nK(1,\alpha 6_1)), \F_7) = 3^{b-1} (2a \alpha +a -1) $.

\item   $\beta_1^\zeta( \partial_0(\wW_2^3),T,  \F_7 )= 3^{b-1} ( a' -1) $ if $a' \ge 1$ and $\beta_1^\zeta( \partial_0(\wW_2^3), \F_7 )= 0$ if $a'=0$.
\end{itemize}

\end{theorem}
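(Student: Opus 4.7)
The plan is to derive both identities directly from Theorem~\ref{thm:disjoint} and Theorem~\ref{thm:equivcount}, applied separately at each end of $W_2(\Sigma)$, and to substitute the eigenspace values computed in the preceding theorem.

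For the first bullet, I begin by noting that, by construction of $\rho'$ in the preceding corollary, the restriction $\rho'|_{H_1(\partial_1)}$ is exactly the composition of $\rho \co H_1(\partial_1) \to \Z_3$ with the inclusion $\Z_3 \hookrightarrow \Z_{3^b}$.  Theorem~\ref{thm:disjoint}, applied with $m = 3$ and $k = 3^{b-1}$, then identifies $\partial_1(\wW_2^3)$ with a disjoint union of $3^{b-1}$ copies of $\wM_2^3(nK(1,\alpha 6_1))$, and shows that $T = S^{3^{b-1}}$ restricts on each summand to the order-$3$ deck transformation of the $3$-fold cover.  Theorem~\ref{thm:equivcount} then yields
\[
\beta_1^\zeta(\partial_1(\wW_2^3), T, \F_7) = 3^{b-1}\,\beta_1^\zeta(\wM_2^3(nK(1,\alpha 6_1)), \F_7),
\]
and substituting the value $2a\alpha + a - 1$ from part~A of the preceding theorem completes the first bullet.

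The second bullet follows by the same two-step argument applied to $\partial_0$, together with a case split on $a'$.  When $a' \ge 1$, the composition of $\rho'|_{H_1(\partial_0)}$ with the quotient $\Z_{3^b} \to \Z_3$ is a map which is nonzero on exactly $a'$ of the $\Z_9$-summands of $H_1(M_2(mK(1,\beta 10_3)))$, so that the same appeal to Theorems~\ref{thm:disjoint} and~\ref{thm:equivcount}, combined with part~B of the preceding theorem, produces the value $3^{b-1}(a' - 1)$.  When $a' = 0$, the mod-$3$ reduction of $\rho'|_{H_1(\partial_0)}$ vanishes; the component structure of $\partial_0(\wW_2^3)$ then only sees the base $M_2(mK(1,\beta 10_3))$ and its covers, whose first homology (generated by $9$-primary and $19$-primary torsion together with $\Z_3$-summands arising from lens-space pieces) becomes trivial after reduction modulo $7$.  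Hence $\beta_1^\zeta = 0$.

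The main technical obstacle I anticipate is verifying that $\rho'|_{H_1(\partial_0)}$ is truly compatible with the inclusion $\Z_3 \hookrightarrow \Z_{3^b}$ required by Theorem~\ref{thm:equivcount}; only its mod-$3$ reduction is directly controlled by Gilmer's theorem and the count $a'$.  To handle this, I plan to rerun the proof of Theorem~\ref{thm:equivcount} for the specific operator $T = S^{3^{b-1}}$ using the CW-chain decomposition of Section~\ref{sec:eqvcw}, and argue that a $\zeta$-eigenvector for the primitive cube root $\zeta$ detects only the mod-$3$ behavior of $\rho'|_{\partial_0}$, with the remaining contributions living in eigenspaces for primitive $3^c$-th roots of unity with $c \ge 2$, which are disjoint from the $\zeta$-eigenspace of $T$.
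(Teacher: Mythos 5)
Your core argument coincides with the paper's: the theorem follows directly from Theorems~\ref{thm:disjoint} and~\ref{thm:equivcount}, applied to the two boundary components of $W_2(\Sigma)$, together with the eigenspace dimensions computed in the preceding theorem, and the paper in fact gives no separate proof beyond this appeal. The technical obstacle you anticipate at the end, however, is a phantom: the corollary preceding Theorem~\ref{thm:cover2} constructs $\rho'$ by first projecting its image in $\Q/\Z$ onto the $3$-primary component and then multiplying by $3$ if necessary so that the boundary restriction $\rho$ has order exactly $3$; since $\rho'|_{\partial_0} = \rho|_{\partial_0}$, this restriction already factors through the order-$3$ subgroup $\Z_3 \hookrightarrow \Z_{3^b}$ on the nose, so no re-derivation of Theorem~\ref{thm:equivcount} for higher $3^c$-th roots of unity is needed.
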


Applying a relative version of Theorem~\ref{thm:cocount} along with Theorem~\ref{thm:handles} gives the next result.

\begin{theorem} Let  $C_i^\zeta(\wW_2^3, \partial_0 (\wW_2^3), \F_7)$ be the $\zeta$--eigenspace of the CW--chain group under the $\Z_3$--action given as the $3^{b-1}$--power of its deck transformation.  Then

\begin{itemize}

\item $ \dim C_1^\zeta(\wW_2^3, \partial_0 (\wW_2^3),T,  \F_7) = 3^{b-1} (2c_0(\Sigma))$.

\item $ \dim C_2^\zeta(\wW_2^3, \partial_0 (\wW_2^3), T, \F_7) = 3^{b-1} (2c_1(\Sigma))$.

\item $ \dim C_3^\zeta(\wW_2^3, \partial_0( \wW_2^3),T , \F_7) = 3^{b-1} (2c_2(\Sigma) + 2g(\Sigma))$.

\end{itemize}

\end{theorem}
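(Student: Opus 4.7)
The plan is to realize the relative cellular chain complex of $(\wW_2^3, \partial_0(\wW_2^3))$ as a free module over $\F_7[\Z_{3^b}]$ and then to analyze it as a module over the subring $\F_7[\Z_3]$ determined by the subgroup $\langle T \rangle$, where $T = S^{3^{b-1}}$. First I would invoke Theorem~\ref{thm:handles} to fix a relative CW--structure on $(W_2(\Sigma), M_2(K_0))$ with $2c_0(\Sigma)$ cells in dimension $1$, $2c_1(\Sigma)$ in dimension $2$, and $2c_2(\Sigma) + 2g(\Sigma)$ in dimension $3$. Lifting this structure through the $3^b$--fold cover yields a compatible relative CW--structure on $(\wW_2^3, \partial_0(\wW_2^3))$. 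Because $\rho'$ surjects onto $\Z_{3^b}$, every relative cell of $W_2(\Sigma)$ lifts to a single free $\Z_{3^b}$--orbit of $3^b$ cells in $\wW_2^3$, so $C_i(\wW_2^3, \partial_0(\wW_2^3), \F_7)$ is a free $\F_7[\Z_{3^b}]$--module whose rank in dimension $i$ equals $\dim_{\F_7} C_i(W_2(\Sigma), M_2(K_0), \F_7)$.

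Next I would restrict the action to the order--$3$ subgroup $\langle T \rangle \subset \Z_{3^b}$. A choice of coset representatives exhibits $\F_7[\Z_{3^b}]$ as a free $\F_7[\Z_3]$--module of rank $3^{b-1}$, so each relative chain group is free of $\F_7[\Z_3]$--rank $3^{b-1}\cdot \dim_{\F_7} C_i(W_2(\Sigma), M_2(K_0), \F_7)$. Since $7 \equiv 1 \pmod{3}$, the field $\F_7$ contains a primitive cube root of unity, and hence $\F_7[\Z_3]$ splits as a direct sum of three one--dimensional eigenspaces; each free $\F_7[\Z_3]$--summand contributes exactly one dimension to the $\zeta$--eigenspace. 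Thus
\[
\dim C_i^\zeta(\wW_2^3, \partial_0(\wW_2^3), T, \F_7) = 3^{b-1}\cdot \dim_{\F_7} C_i(W_2(\Sigma), M_2(K_0), \F_7),
\]
and substituting the three handle counts from Theorem~\ref{thm:handles} gives the displayed identities.

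The point requiring care is the freeness claim in the first paragraph: one needs that no relative cell of $W_2(\Sigma)$ is stabilized by a nontrivial element of the deck group. This is automatic because the relative cells are open cells in the interior $W_2(\Sigma) \setminus M_2(K_0)$, on which the deck group acts freely as an unbranched $3^b$--fold covering transformation group. Working with the relative pair rather than $\wW_2^3$ alone also sidesteps any complication arising from the decomposition of $\partial_0(\wW_2^3)$ into $3^{b-1}$ copies of $\wM_2^3(mK(1,\beta 10_3))$ under the $\langle T \rangle$--action, since those cells do not appear in the relative chain complex.
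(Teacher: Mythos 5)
Your argument is correct and reconstructs exactly the proof the paper leaves implicit; the paper only cites ``a relative version of Theorem~\ref{thm:cocount} along with Theorem~\ref{thm:handles},'' and your three steps (lift the relative handle structure from Theorem~\ref{thm:handles} with $n=2$, observe freeness over $\F_7[\Z_{3^b}]$, restrict to $\langle T\rangle\cong\Z_3$ and split $\F_7[\Z_3]$ into eigenlines since $7\equiv 1\bmod 3$) are precisely that combination made explicit. One point worth flagging: Theorem~\ref{thm:cocount} as literally stated treats a $\rho'$ obtained by composing a $\Z_m$--valued map with the inclusion $\Z_m\subset\Z_{km}$, whereas here $\rho'\co H_1(W_2(\Sigma))\to\Z_{3^b}$ does not factor through $\Z_3$ (it is taken surjective, so the cover is connected rather than a disjoint union). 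Your module-theoretic phrasing — each relative chain group is free over $\F_7[\Z_{3^b}]$ of rank equal to the handle count, hence free over $\F_7[\Z_3]$ of rank $3^{b-1}$ times that, hence each free $\F_7[\Z_3]$--summand contributes one dimension to the $\zeta$--eigenspace — handles this case directly and yields the same numerical conclusion, so you have in effect supplied the needed ``relative version'' in slightly greater generality than the quoted statement. Your final paragraph correctly identifies the one point requiring care, namely that the deck action on the interior cells is free because the $3^b$--fold cover of $W_2(\Sigma)$ is unbranched.
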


\begin{theorem}\label{thm:metbound} Let $\Sigma$ be a genus $g$ cobordism from $nK(1, \alpha 6_1)$ to $mK(1, \beta 10_3)$.  Assume that $n > 2g$.  Then 
\[ 
c_0(\Sigma) \ge \frac{2\alpha +1 -m}{4} -g.
\]

\end{theorem}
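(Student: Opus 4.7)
The plan is to run the argument of Theorem~\ref{thm:homology-bound} inside the iterated cover $\wW_2^3(\Sigma)$ built in this section, using a primitive cube root of unity $\zeta \in \F_7$ and the $\Z_3$-action $T = S^{3^{b-1}}$ obtained as a power of the order-$3^b$ deck transformation $S$. First I would invoke the Corollary following Theorem~\ref{thm:gilmer3} (applicable because $n > 2g$) to extract a surjection $\rho\co H_1(M_2(nK(1,\alpha 6_1))) \to \Z_3$ that extends to $\rho'\co H_1(W_2(\Sigma)) \to \Z_{3^b}$ for some $b$. Write $a \ge 1$ for the number of $\Z_9$-summands on the $K_1$ side on which $\rho$ is nonzero and $0 \le a' \le m$ for the analogous count on the $K_0$ side.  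The unknown exponent $b$ will cancel at the end, because every equivariant Betti number and chain group dimension we need picks up a uniform factor of $3^{b-1}$.

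With these choices, Theorem~\ref{thm:cover2} yields
\[
\beta_1^\zeta(\partial_1 \wW_2^3, T, \F_7) = 3^{b-1}(2a\alpha + a - 1), \qquad \beta_1^\zeta(\partial_0 \wW_2^3, T, \F_7) \le 3^{b-1}(m-1),
\]
where the upper bound on the $K_0$ side is valid in both cases $a' \ge 1$ and $a' = 0$ (using $m \ge 1$). Applying Theorem~\ref{thm:cocount} to the relative handlebody decomposition of Theorem~\ref{thm:handles} gives
\[
\dim C_1^\zeta(\wW_2^3, \partial_0 \wW_2^3, T, \F_7) = 3^{b-1}(2c_0),
\]
and, by flipping the cobordism so $\partial_1$ serves as the base of the relative handlebody (which swaps the roles of $c_0$ and $c_2$ in Theorem~\ref{thm:handles}),
\[
\dim C_i^\zeta(\wW_2^3, \partial_1 \wW_2^3, T, \F_7) = 3^{b-1}(2 c_{3-i}) \text{ for } i = 1, 2.
\]

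Now I would apply Lemma~\ref{lemma:inequalities1} twice: its first inequality to $(\wW_2^3, \partial_0 \wW_2^3)$, and its second inequality to $(\wW_2^3, \partial_1 \wW_2^3)$.  Chaining the two bounds on $\beta_1^\zeta(\wW_2^3, T, \F_7)$ and dividing through by $3^{b-1}$ gives
\[
(m-1) + 2 c_0 \ge (2a\alpha + a - 1) + 2 c_2 - 2 c_1.
\]
Substituting $c_1 = c_0 + c_2 + 2g$ and simplifying collapses this to $4 c_0 \ge 2 a \alpha + a - m - 4g$.  Since $a \ge 1$ and $\alpha \ge 0$, the right side is at least $2\alpha + 1 - m - 4g$, and dividing by $4$ produces the stated bound.

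The delicate step is verifying the hypothesis of Lemma~\ref{lemma:inequalities1} that $T$ preserve the components of each $\partial_i \wW_2^3$.  Because $\rho'$ restricted to either boundary factors through $\Z_3 \hookrightarrow \Z_{3^b}$, Theorem~\ref{thm:disjoint} identifies $\partial_i \wW_2^3$ as a disjoint union of $3^{b-1}$ copies of the 3-fold cover $\wM_2^3(K_i)$, and $T = S^{3^{b-1}}$ restricts to the order-$3$ deck transformation on each copy; this makes the hypothesis hold. A subsidiary bookkeeping point is that the extra $2g$ $3$-handles of Theorem~\ref{thm:handles} sit on whichever boundary is used as the base of the decomposition, and balancing these against the substitution of $c_1$ is precisely what produces the $-g$ summand in the conclusion.
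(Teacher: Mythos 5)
Your proposal is correct and follows essentially the same route as the paper: the paper's proof explicitly says it is "much like the one for Theorem~\ref{thm:homology-bound}," applies Theorem~\ref{thm:cover2} to bound $\beta_1^\zeta(\partial_i\wW_2^3,T,\F_7)$, reads the cobordism from both ends, combines the two inequalities, and substitutes $c_1 = c_0 + c_2 + 2g$ before invoking $a \ge 1$. You spell out the invocation of Lemma~\ref{lemma:inequalities1} and the verification that $T = S^{3^{b-1}}$ preserves boundary components (both implicit in the paper's appeal to the earlier proof), but these are elaborations rather than a different argument.
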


\begin{proof} The proof is much like the one for Theorem~\ref{thm:homology-bound}.   We work with the $\zeta$--eigenspaces of the $\Z_3$--actions.   Consider the fact that $\wW_2^3$ is built from $\partial_0( \wW_2^3)$.   We have  

\[  
\beta_1^\zeta (\wW_2^3 ,T,  \F_7)  \le 3^{b-1} (m-1) + 3^{b-1} (2c_0(\Sigma)).
\]
The first summand comes from the homology of the boundary, using the fact that in Theorem~\ref{thm:cover2} we have $a'-1 \le m$.  Turning the bordism upside down and using that fact that  $\wW_2^3$ is built from $\partial_1 (\wW_2^3)$ by adding 1--handles and 2--handles that correspond to the index two and index one critical points of $\Sigma$, respectively,  we find that 
\[  
\beta_1^\zeta (\wW_2^3, T, \F_7) \ge  3^{b-1} ( 2a \alpha +a -1  )+ 3^{b-1} (2c_2 (\Sigma))- 3^{b-1} (2c_1(\Sigma)).
\] 

Together, these inequalities imply  
\[  
( 2a \alpha +a -1  )+    2c_2 (\Sigma) -   (2c_1(\Sigma))  \le   (m-1) +   (2c_0(\Sigma)).
\] 
We have that $c_1(\Sigma) = c_0(\Sigma) + c_2(\Sigma) + 2g(\Sigma)$.  Substituting yields  
\[
( 2a \alpha +a -1  )+   2c_2 (\Sigma) -   2(    c_0(\Sigma) + c_2(\Sigma) + 2g(\Sigma)          )\le   (m-1) +    2c_0(\Sigma).
\] 
This simplifies to give 
\[ 
c_0(\Sigma) \ge \frac{2a\alpha +a -m}{4} -g.
\]
Finally, since $ a\ge 1$, we have the desired result: 
\[ 
c_0(\Sigma) \ge \frac{2\alpha +1 -m}{4} -g.
\]  
\end{proof}

\subsection{Strengthening the bounds}

The difference between the lower bound provided by Theorem~\ref{thm:metbound} and the best upper bound that we can prove with a realization  result   is quite large. For instance, we have the following realization result.

\begin{theorem} If $g \le  \min\{ n(2\alpha +1) ,  m(2\beta +1)\}$, then there is a genus $g$ cobordism $\Sigma$  from $nK(\alpha 6_1)$ to $mK(\beta 10_3)$ satisfying 
\[ c_0(\Sigma) = n(2\alpha +1) -g \hskip.4in \text{and} \hskip.4in  c_0(\Sigma) = m(2\beta +1) -g  
\]

\end{theorem}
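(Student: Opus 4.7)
The plan proceeds in three stages. The key ingredient is an economical ribbon disk for $nK(1,\alpha 6_1)$ with exactly $n(2\alpha+1)+1$ minima. To build it, I would perform the $n$ ribbon band moves $\gamma_1,\ldots,\gamma_n$ from Section~\ref{sec:lens} simultaneously, one on each $K(1,\alpha 6_1)$-summand, with the connect-sum basepoints chosen so that they all lie on a single side of each $\gamma_i$. A straightforward induction on $n$ shows that this converts $nK(1,\alpha 6_1)$ into a link with $n+1$ components: $n$ disjoint copies of $\alpha 6_1$ and one copy of the connect sum $(n\alpha)\cdot 6_1$. Applying the same simultaneous-moves idea to $(k\alpha)\cdot 6_1$ itself, one checks that $(k\alpha)\cdot 6_1$ bounds a ribbon disk with $k\alpha+1$ minima and $k\alpha$ saddles. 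Assembling these partial ribbon disks together with the $n$ ribbon-move saddles gives a ribbon disk for $nK(1,\alpha 6_1)$ with $n(\alpha+1)+(n\alpha+1)=n(2\alpha+1)+1$ minima and $n\alpha+n\alpha+n=n(2\alpha+1)$ saddles, so $\chi=1$ as required. The same argument produces a ribbon disk for $mK(1,\beta 10_3)$ with $m(2\beta+1)+1$ minima.

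Next, I would shave one minimum from each of these two ribbon disks to obtain concordances $U\to mK(1,\beta 10_3)$ and $U\to nK(1,\alpha 6_1)$. Splice them at height $1/2$, flipping the $mK(1,\beta 10_3)$ concordance upside down so that its $m(2\beta+1)$ minima become maxima. The resulting connected genus-$0$ cobordism $\Sigma_0$ from $mK(1,\beta 10_3)$ at height $0$ to $nK(1,\alpha 6_1)$ at height $1$ has a single circle $U$ at the splice slice (this is what guarantees connectedness: we splice through a $U$ rather than through an empty slice), together with $c_0(\Sigma_0)=n(2\alpha+1)$, $c_2(\Sigma_0)=m(2\beta+1)$, and every local maximum at a strictly lower height than every local minimum.

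Finally, I would iterate the modification from Example~\ref{ex:genus-grow} exactly $g$ times. Each iteration picks a max/min pair $(p,q)$ with $F(p)<F(q)$, chooses a monotonically increasing arc in $S^3\times[F(p),F(q)]$ that is transverse to the current surface and to any tubes introduced at previous iterations, and replaces small disk neighborhoods of $p$ and $q$ with a tube wrapping this arc. Each trade decreases both $c_0$ and $c_2$ by $1$ and raises the genus by $1$, leaving $c_1$ unchanged, and the hypothesis $g\leq\min\{n(2\alpha+1),m(2\beta+1)\}$ supplies enough pairs. The result is a genus-$g$ cobordism $\Sigma$ satisfying $c_0(\Sigma)=n(2\alpha+1)-g$ and $c_2(\Sigma)=m(2\beta+1)-g$. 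The main subtlety is the combinatorics of the simultaneous ribbon moves in the first stage (the pattern of disjoint versus connect-summed $\alpha 6_1$ components emerging from $nK(1,\alpha 6_1)$); once this is in hand, the remaining steps are standard Morse-theoretic surgery inside $S^3\times I$.
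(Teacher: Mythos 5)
Your proof is correct and takes the same approach as the paper's, which is given only as a two-sentence sketch: splice the canonical ribbon disks into a concordance whose local maxima all lie strictly below its local minima, then trade max/min pairs for genus as in Example~\ref{ex:genus-grow}. The one substantive detail you supply is the count that $nK(1,\alpha 6_1)$ bounds a ribbon disk with exactly $n(2\alpha+1)+1$ minima via simultaneously performed band moves; this is genuinely needed, since the naive boundary connect sum of $n$ copies of the canonical ribbon disk would have $n(2\alpha+2)$ minima and produce $c_0(\Sigma)=n(2\alpha+2)-1-g$, overshooting the stated value when $n>1$.
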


\begin{proof}
The construction given in Example~\ref{ex:genus-grow} can be easily modified to produce the result.  What is essential is that the canonical ribbon disks can be pieced together to form a concordance in  which the local maxima are beneath the local  minima.  
\end{proof}

A limitation in this theorem is the  absence of $n$ in the bound on $c_0$ given Theorem~\ref{thm:metbound}.  We want to explore this briefly.  We have an  inclusion of $(\Z_3)^{m+n} $ into a group with nonsingular linking form:
\[ (\Z_3)^n \oplus (\Z_3)^m \subset (\Z_9)^n \oplus (\Z_9)^m.\]  
We have assumed that $n +m  > 2g$ and identified a   subgroup $\calm \subset (\Z_9)^n \oplus (\Z_9)^m$ of order $3^{n+m - 2g}$ upon which the linking form is identically 0.   In the proof of Theorem~\ref{thm:metbound}, we used the fact that if $n >2g$ then $\calm \cap (\Z_3)^n \oplus 0$ is nontrivial.  But in fact, if $n$ is large in comparison to $m$ and $g$, then the rank of the intersection  $\calm \cap (\Z_3)^n \oplus 0$ must be large as well; in particular, rather than use $a\ge 1$ in the argument, we could find metabolizing elements for which $a$ is much larger.  Similar, we used the obvious fact that $a' \le m$; with care, we could also show that it is possible to assume that $a'$ is close to 0.  We have opted not to undertake the careful analysis of self-annihilating subgroups of the standard linking form on $(\Z_9)^{n+m}$ that  is required to establish these better bounds. 


\section{Non-reversible knots}

To conclude our presentation of examples, we consider a subtle family of examples built from knots $K$ and $K^r$, where $K^r$ denotes the reverse of $K$.  Such knots are difficult to distinguish by any means.  For instance, all abelian invariants are identical for the two knots.  It is not known at the moment whether any invariants that are built from the Heegaard Floer knot complex $\cfk^\infty(K)$ defined in~\cite{MR2065507}, such as  its involutive counterpart, defined in~\cite{MR3649355}, can distinguish them.  The successful application   of metacyclic invariants to distinguishing knots from their reverses began with the work of Hartley,~\cite{MR683753}.

 Figure~\ref{fig:reversible}  illustrates a knot that we will denote $P = P(J_1,J_2)$.  The starting knot is the pretzel knot $P(3,-3,3)$, and knots $J_1$ and $J_2$ are placed in the two bands.  Notice that we have indicated the orientation of $P$.  We let $P^*$ denote reverse of the knot; that is, the knot with the same diagram except with the arrow reversed (the  use of  $P^*$ rather than the more standard notation  $P^r$ will simplify some notation later on).  These knots have formed the basis of a variety of concordance result related to reversibility; see, for instance,~\cite{2019arXiv190412014K}.  
 In past papers that used  these knots, the $J_1$ were chosen so that the knots could be shown not to be concordant.  We will let $J_1$ and $J_2$ be slice knots, so that they the $P$ and $P^*$ are themselves slice, and or results apply to consider concordances between them.

\begin{figure}[h]
\labellist
 \pinlabel {\text{\large{$J_1$}}} at 20 360
 \pinlabel {\text{\large{$J_2$}}} at 660 360
\endlabellist
\includegraphics[scale=.2]{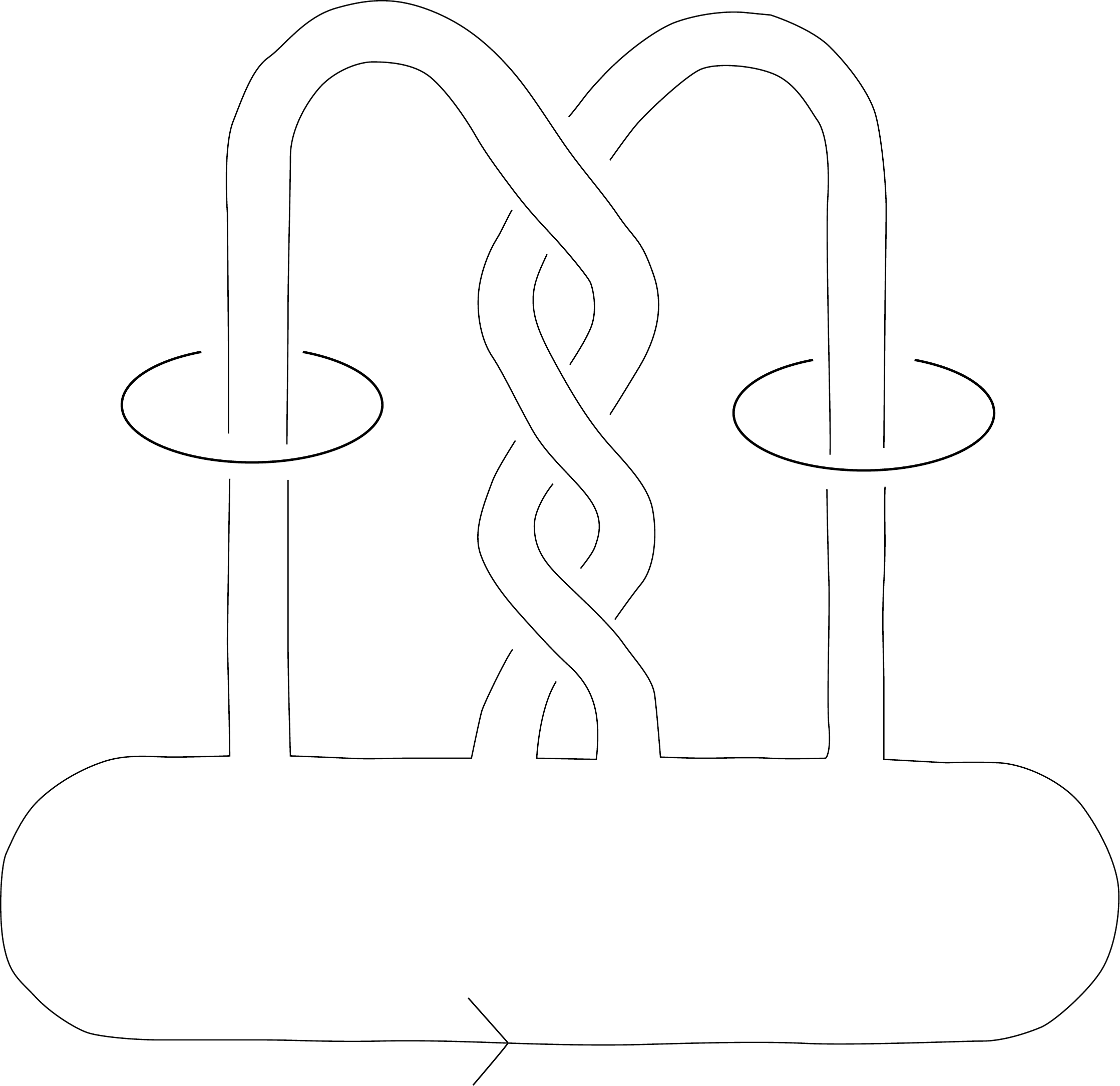} 
\caption{The knot $P(J_1, J_2)$.}
\label{fig:reversible}
\end{figure}

We will now briefly summarize the results of some calculations related to these, leaving the   details   to~\cite{2019arXiv190412014K}.  First, we have for the 3--fold cover that $H_1(M_3(P)) \cong \Z_7 \oplus \Z_7$.    This group splits into a 2--eigenspace and a 4--eigenspace for the deck transformation using $\F_7$--coefficients.  If $z$ and $w$ are linking circles to the two bands, with $\widetilde{z}$ and $\widetilde{w}$ being chosen lifts to $M_3(P)$, then the 2--eigenspace and 4--eigenspace are spanned by  $\widetilde{z}$ and $\widetilde{w}$, respectively.  For $P^*$ we have $\widetilde{z}^*$ and $\widetilde{w}^*$ as  eigenvectors, but because of the reversal, they are the 4--eigenvectors and 2--eigenvectors, respectively.

Let $\Sigma$ be a cobordism from $P$ to $P^*$.  

We let  $W_3$ be the 3--fold   cover of $S^3 \times [0,1]$ branched over $\Sigma$. In Section~\ref{sec:6_1} we considered a metabolizer $\calm$  of the linking form on $H_1(M_3(P)) \oplus H_1(-M_3(P^*))$.  This metabolizer must be invariant under the $Z_3$--action, and thus is spanned by eigenvectors.  Here are the possibilities.

\begin{itemize}
\item $\calm$ is a 2--eigenspace, spanned by $\{\widetilde{z} , \widetilde{w}^*\}$.

\item $\calm$ is a 4--eigenspace, spanned by $\{\widetilde{w} , \widetilde{z}^*\}$.

\item $\calm$ contains a nontrivial   2--eigenvector $a \widetilde{z }+ b \widetilde{w}^*$  and a 4--eigenvector $c \widetilde{w }+ b \widetilde{z}^*$.

\end{itemize}

We now wish to find obstructions based on the $7$--fold cyclic covers of the spaces involved.  There are three cases to consider.  Here is a summary of what arises.

\begin{itemize}
\item {\bf Case 1:}  Considering the eigenvector $\widetilde{z} $, the corresponding cover of $M_3(P)$ will have first homology that depends on the homology of $M_7(J_1)$. For the  eigenvector $\widetilde{w}^*$, the corresponding cover of $M_3(P^*)$ will have first homology that depends on the homology of $M_7(J_2)$.   

\item {\bf Case 2:} This is similar.  For the eigenvector $\widetilde{w} $, the corresponding cover of $M_3(P)$ will have first homology that depends on the homology of $M_7(J_2)$. For the  eigenvector $\widetilde{z}^* $, the corresponding cover of $M_3(P^*)$ will have first homology that depends on the homology of $M_7(J_1)$. 

\item {\bf Case 3:} The last case splits into subcases, depending on whether the coefficients $a$, $b$, $c$, and $d$ are zero or not.  The most interesting case is when, say $a \ne 0 \ne b$. Then the corresponding 7--fold cover of $M_3(P)$ will involve the first homology of $M_7(J_1)$ and the corresponding 7--fold cover of $M_3(P^*)$ will also involve the first homology of $M_7(J_2)$.  

\end{itemize}

From this it should be clear that by choosing $J_1$ and $J_2$ so that the rank of the first homology groups  $H_1(M_7(J_1),\F_p)$ and $H_1(M_7(J_2), \F_{p'})$ are large for appropriate primes $p$ and $p'$,   then regardless of which metabolizer arises, there will be obstructions to the values of $c_0(\Sigma)$ and $c_2(\Sigma)$ being small.  This can be achieved by letting $J_1$ be a multiple of $6_1$ and letting $J_2$ be a multiple of $10_3$.  A computation as described in the appendix shows $H_1(M_7(6_1)) \cong \Z_{127} \oplus \Z_{127}$ and  $H_1(M_7(10_3)) \cong \Z_{2059} \oplus \Z_{2059}$.  The number 2059 has prime factors 29 and 71.  All of $\F_{127}$, $\F_{29}$ and $\F_{71}$ contain primitive $7$--roots of unity.

To construct examples in Section~\ref{sec:lens}, we used the fact that the 3--fold cover of $L(9,2)$ is $L(3,2)$.   For carrying out an explicit computation here, we would need to know the homology of the  7--fold cover of $M_3(P)$ corresponding to each eigenspace of the $\Z_3$--action.  Regardless of what there groups are,  their ranks  in comparison to the rank of $H_1(M_7(\alpha J_1))$ or $H_1(M_7(\beta J_2))$ will be small if  $\alpha $ and $\beta$ are large.  This permits one to  prove the following result.

\begin{theorem}  For any non-negative integers $g$, $c_0$ and $c_2$, there are positive integers $\alpha$ and $\beta$ such that the knot $P = P(\alpha 6_1, \beta 10_3)$ has the following properties.

\begin{itemize}
\item $P(\alpha 6_1, b10_3)$ is a ribbon knot.

\item Any genus $g$ cobordism $\Sigma$ from $P$ to $P^*$ has $c_0(\Sigma) \ge c_0$ and $c_2(\Sigma) \ge c_2$.

\end{itemize}

\end{theorem}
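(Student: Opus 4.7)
The plan is to combine the ribbon construction of Section~\ref{sec:lens} with an iterated application of the metacyclic machinery of Section~\ref{sec:6_1}, now applied at the level of 3--fold branched covers rather than 2--fold covers, so that the obstructions to small cobordisms come from 7--fold cyclic covers of $M_3(P)$.

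First I would verify that $P = P(\alpha 6_1, \beta 10_3)$ is ribbon: the pretzel $P(3,-3,3)$ is ribbon via the simple closed curve on its Seifert surface that passes once over each of the two bands in Figure~\ref{fig:reversible}; this curve is unknotted with framing zero. Because $\alpha 6_1$ and $\beta 10_3$ are connected sums of ribbon knots and hence themselves ribbon, the argument used earlier for $K(k,J)$ applies verbatim: surger along the ribbon circle and cap off each resulting band with ribbon disks for $\alpha 6_1$ and $\beta 10_3$.

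For the critical point bounds, given a genus $g$ cobordism $\Sigma$ from $P$ to $P^*$, I would form the 3--fold cyclic branched cover $W_3(\Sigma)$, a cobordism from $M_3(P)$ to $M_3(P^*)$. Once $\alpha$ and $\beta$ are chosen large enough that the $\F_{7}$-- and $\F_{29}$--ranks of $H_1(M_3(P))$ both exceed $2g$, Gilmer's theorem in the form used in Theorem~\ref{thm:gilmer3} produces a $\Z_3$--invariant metabolizer $\calm$ of the linking form on $H_1(M_3(P)) \oplus H_1(-M_3(P^*))$ whose elements extend to $H_1(W_3(\Sigma))$; $\Z_3$--invariance forces $\calm$ to be spanned by eigenvectors, giving the three cases listed before the theorem statement. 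In each case I would pick a nonzero eigenvector $\rho \in \calm$, extend it to $\rho' \co H_1(W_3(\Sigma)) \to \Z_{7^b}$ for some $b \ge 1$, form the associated $7^b$--fold cover $\wW$ with deck transformation $S$, and set $T = S^{7^{b-1}}$. By Theorem~\ref{thm:equivcount}, the $\zeta$--eigenspace of $H_1(\partial \wW)$ for a primitive $7$--root of unity $\zeta$ grows linearly in $\alpha$ on the $P$--side (via the $M_7(\alpha 6_1)$ contributions, visible over $\F_{127}$) and linearly in $\beta$ on the $P^*$--side (via the $M_7(\beta 10_3)$ contributions, visible over $\F_{29}$ or $\F_{71}$). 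Feeding these eigenspace dimensions into the Euler-characteristic argument of Theorem~\ref{thm:metbound}, together with the handle counts of Theorem~\ref{thm:handles}, yields lower bounds of the shape $c_0(\Sigma) \ge A\alpha - Bg - C$ and, by reversing the cobordism, $c_2(\Sigma) \ge A'\beta - B'g - C'$; choosing $\alpha$ and $\beta$ sufficiently large then forces $c_0(\Sigma) \ge c_0$ and $c_2(\Sigma) \ge c_2$.

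The main obstacle is the mixed Case~3, in which $\calm$ contains an eigenvector $a\widetilde{z} + b\widetilde{w}^*$ with $a, b \ne 0$, together with a companion $4$--eigenvector $c\widetilde{w} + d\widetilde{z}^*$. There the cover on the $P$--side is compelled to see $\alpha 6_1$ while the cover on the $P^*$--side is compelled to see $\beta 10_3$, but over any single coefficient field $\F_p$ only one of the two multiples contributes nontrivially. Separating the analysis by prime, using $\F_{127}$ to extract the $\alpha$--bound on $c_0$ and $\F_{29}$ (or $\F_{71}$) to extract the $\beta$--bound on $c_2$, should allow the Theorem~\ref{thm:metbound} argument to be run once per side. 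A secondary technical point, flagged in the introduction, is that $\rho'$ only lands in $\Z_{7^b}$ for some unknown $b$; the use of $T = S^{7^{b-1}}$ is precisely what renders the final bound independent of $b$. Verifying uniform positive lower bounds on the coefficients $A$ and $A'$ across all three eigenspace cases is where most of the careful bookkeeping will need to take place.
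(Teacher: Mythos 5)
Your overall outline matches the paper's sketch: establish $P$ is ribbon via the framing-zero unknotted curve, form the $3$--fold branched cover $W_3(\Sigma)$, find a $\Z_3$--invariant metabolizer of the linking form on $H_1(M_3(P)) \oplus H_1(-M_3(P^*))$, split into the three eigenvector cases, and obstruct via $7^b$--fold covers using the Euler-characteristic argument of Theorem~\ref{thm:metbound} and the ``unknown $b$'' device $T = S^{7^{b-1}}$. (The paper itself gives only this sketch and defers the bookkeeping to~\cite{2019arXiv190412014K}, so your level of detail is comparable.)

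However, there is a factual error at the metabolizer step. You propose to ``choose $\alpha$ and $\beta$ large enough that the $\F_7$-- and $\F_{29}$--ranks of $H_1(M_3(P))$ both exceed $2g$.'' This is impossible: the knots $\alpha 6_1$ and $\beta 10_3$ are tied into the \emph{bands} of the fixed genus-$1$ Seifert surface for $P(3,-3,3)$, so the Seifert form of $P(\alpha 6_1,\beta 10_3)$ is independent of $\alpha,\beta$; consequently $H_1(M_3(P)) \cong \Z_7\oplus\Z_7$ for all choices. Its $\F_7$--rank is always $2$ and its $\F_{29}$--rank is always $0$, so neither can be made to exceed $2g$ by enlarging $\alpha,\beta$. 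The quantities that do grow with $\alpha,\beta$ are the ranks of $H_1(M_7(\alpha 6_1))\cong\Z_{127}^{2\alpha}$ and $H_1(M_7(\beta 10_3))\cong\Z_{2059}^{2\beta}$, and these only enter the picture \emph{after} the metabolizer has been chosen, at the level of the $7$--fold (or $7^b$--fold) cyclic covers. You should separate cleanly the two roles: the Gilmer-type metabolizer on $H_1(M_3(P))\oplus H_1(-M_3(P^*))\cong(\Z_7)^4$ exists subject to a genus constraint coming from the fixed $\F_7$--rank $4$ of that group (this is where the relation between $g$ and the rank matters, and it is not affected by $\alpha,\beta$), while $\alpha,\beta$ control only the size of the $\F_{127}$-- and $\F_{29}$/$\F_{71}$--eigenspaces appearing downstream in the metacyclic step. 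As written, your proposal conflates these two, and would need to be reworked to address how (or whether) the Gilmer input survives for general $g$.
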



\section{Problems}\label{sec:problems}
\begin{enumerate}

\item  Is $\calg_g(K_1,K_0)$ always a quadrant,    of the form $\calq(a,b)$, for some $a$ and $b$?

\item An affirmative answer to the previous question would be implied by a positive answer to the following:  If $(a+1, b) \in \calg_g(K_1,K_0)$ and $(a, b+1) \in \calg_g(K_1,K_0)$, then is $(a,b) \in   \calg_g(K_1,K_0)$?  This would also imply Gordon's Conjecture~\cite{MR634459}: If $K_1$ is ribbon concordant to $K_0$ and $K_0$ is ribbon concordant to $K_1$, then $K_1 = K_0$.

\item An even simpler generalization of Gordon's conjecture is the following statement:  if for some $c_0$ and $c_2$,  $(c_0,0) \in \calg_g(K_1,K_0)$ and $(0,c_2) \in \calg_g(K_1,K_0)$, then $\calg_g(K_1,K_2) = \calq(0,0)$.

\item If $(a+1,b+1) \in \calg_g(K_1,K_0)$, then is $(a,b) \in \calg_{g+1}(K_1,K_0)$?

\item  \label{prob1} Recall that the bridge number of $K$ is denoted $\text{br}(K)$ and we defined $\text{b}(K)$ to be the minimum number of index 0 critical points of a slice disk for $K \cs -K$.   It is elementary to show that $ \text{b}(K) \le  \text{br}(K)$.  It is also not difficult to  construct ribbon knots $K$ with large bridge index that bound disks in the four-ball with one saddle point.   Using these  knots  we see that $\text{br}(K) - \text{b}(K)$ can be arbitrarily large.  

For the torus knot $T_{2,3}$ we have $\text{br}(T_{2,3}) = 2$ and it is elementary to see that $\text{b}(T_{2,3}) = 2$.  In fact, in~\cite{MR4186142} it is shown that for torus knots,    $\text{b}(K) = \text{br}(K)$.  Yet there are still basic examples that are unresolved: for   $K = nT_{2,3}$ we have  $\text{br}(K) = n+1$; is it true that  $\text{b}(nT_{2,3})=  n+1$?
\end{enumerate}


\appendix

\section{The knots $K(k, J)$}
Here we summarize the computations required in Section~\ref{sec:lens} that determine the homology groups of covering spaces associated to   $K(k,J)$. Recall that if  $J$ is unknotted, this is the two-bridge knot $B((2k+1)^2, 2k)$.   It is the basic building block for the examples in Lemma~\ref{lem:homcover}. 

\subsection{A Seifert surface for  $K(k,J)$ and its Seifert form.} 

The knot $K(k,J)$ has a genus $1$ Seifert surface  $F$ built by attaching two bands to a disk, one with framing $k+1$ and other with framing $-k$.  The first band has a knot $J$ tied it it.  This was illustrated in Figure~\ref{fig:meta1}.  The   Seifert matrix with respect to the natural basis $\{a, b\}$ of $H_1(F)$ is  
\[ 
A_k =   \begin{pmatrix}k+1 & 1 \\
0 & -k \\
\end{pmatrix}.
\]

The classes $a$ and $b$ are represented by simple closed curves on $F$ representing the unknot and the knot $J$.  If we change    basis, letting $a' = a -b$ and $b' = b$ then the Seifert matrix becomes 
\[ 
B_k =   \begin{pmatrix} 0 & k+1 \\
k & -k \\
\end{pmatrix}.
\]
These generators are still represented by simple closed curves, the first of which is unknotted and the second of which represents $J$. 

\subsection{The homology of the cyclic branched covers of $K(k,J)$}  We next have the computation of the needed homology groups.

\begin{theorem}  Let $K(k,J)$ be as above.  Then $H_1(M_2(K(k,J))) \cong \Z_{(2k+1)^2}$.  For $n$ odd, $H_1(M_n(K(k,J))) \cong \Z_{d} \oplus \Z_d$, where $d = (k+1)^n - k^n$.

\end{theorem}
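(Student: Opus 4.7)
The starting point is the Seifert matrix $B_k$ computed above. Writing $f(t) = (k+1) - kt$ and $g(t) = (k+1)t - k$, we have
\[ tB_k - B_k^T = \begin{pmatrix} 0 & g(t) \\ -f(t) & -k(t-1) \end{pmatrix}, \]
with $\det = fg = \Delta_K(t)$. My first step is to show that the Alexander module $A_K = \Z[t,t^{-1}]^2/(tB_k - B_k^T)$ is cyclic over $\Z[t,t^{-1}]$, by computing an explicit Smith normal form. The key observation is that $-f(t) + (-k(t-1)) = -1$, a unit in $\Z[t,t^{-1}]$; a handful of elementary row and column operations, using this identity and the companion identity $1 - k(t-1) = f(t)$, reduce the presentation matrix to $\operatorname{diag}(1, fg)$. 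Hence $A_K \cong \Z[t,t^{-1}]/(fg)$ and $H_1(M_n(K)) \cong \Z[t,t^{-1}]/(fg, t^n - 1)$ for every $n$.

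For $n = 2$ I would work directly in $\Z[t]/(t^2 - 1)$, which has $\Z$-basis $\{1, t\}$. Reducing $fg$ modulo $t^2 = 1$ gives $(2k^2 + 2k + 1)t - 2k(k+1)$, and multiplying by $t$ gives the second relation $(2k^2 + 2k + 1) - 2k(k+1)t$. The resulting $2 \times 2$ integer presentation matrix has determinant $-(2k+1)^2$ and gcd of entries equal to $1$ (since $(2k^2 + 2k + 1) - 2k(k+1) = 1$), so its Smith normal form is $\operatorname{diag}(1, (2k+1)^2)$, giving $H_1(M_2(K)) \cong \Z_{(2k+1)^2}$.

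For $n$ odd my plan is a Chinese Remainder argument in $\Z[t,t^{-1}]$. The identities $kf + (k+1)g = (2k+1)t$ and $f + g = 1 + t$ show $(f, g) \supset (2k+1, 1+t)$. Reducing $t^n - 1$ modulo $1 + t$ yields $(-1)^n - 1 = -2$, so $(f, g, t^n - 1) \supset (2k+1, -2)$; since $2k+1$ is odd this ideal contains $1$. Thus $(f, t^n - 1) + (g, t^n - 1) = \Z[t,t^{-1}]$, and CRT yields
\[ \Z[t,t^{-1}]/(fg, t^n - 1) \cong \Z[t,t^{-1}]/(f, t^n - 1) \oplus \Z[t,t^{-1}]/(g, t^n - 1). \]
To finish, I would identify each summand with $\Z_d$: the ring $\Z[t,t^{-1}]/(f)$ is isomorphic to the subring $\Z[1/(k(k+1))] \subset \Q$ via $t \mapsto (k+1)/k$, under which $t^n - 1$ becomes the unit multiple $d/k^n$ of $d$, so the further quotient is $\Z[1/(k(k+1))]/(d) = \Z_d$ (using that $d \equiv (k+1)^n \pmod{k}$ and $d \equiv -k^n \pmod{k+1}$ are both units, so $k$ and $k+1$ are invertible modulo $d$). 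The argument for $g$ is symmetric.

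The main delicate point is that the CRT splitting must be carried out over $\Z[t,t^{-1}]$, not just $\Q[t,t^{-1}]$: the resultant of $f$ and $g$ is $\pm(2k+1)$ rather than $1$, so the parity hypothesis on $n$ is essential and enters precisely through $\gcd(2, 2k+1) = 1$, which is what allows $1$ to land in $(f, g, t^n - 1)$.
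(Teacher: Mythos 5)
Your proof is correct, and it takes a genuinely different route from the paper's. The paper treats $n=2$ and $n$ odd separately: for $n=2$ it uses the symmetrized matrix $A_k+A_k^{\sf T}$, and for $n$ odd it applies Seifert's formula $\Gamma^n - (\Gamma-\mathrm{Id})^n$ with $\Gamma = (B^{\sf T}-B)^{-1}B^{\sf T}$, observes that the resulting presentation matrix is upper triangular with diagonal entries $(k+1)^n-k^n$, and then invokes Plans' theorem (that the homology of an odd-fold cyclic branched cover is a double) to dispose of the off-diagonal entry. You instead begin by showing that the Alexander module is cyclic, $\Lambda/(fg)$ with $\Lambda = \Z[t,t^{-1}]$, $f = (k+1)-kt$, $g = (k+1)t-k$, via an explicit Smith reduction of $tB_k-B_k^{\sf T}$ exploiting the unit $-f - k(t-1) = -1$, and then uniformly write $H_1(M_n) \cong \Lambda/(fg, t^n-1)$; the $n=2$ case is then a small $\Z$-linear computation, and for $n$ odd you split the quotient by an integral Chinese Remainder argument, with comaximality of $(f,t^n-1)$ and $(g,t^n-1)$ resting on $(f,g)\supset(2k+1,\,1+t)$ together with $t^n-1 \equiv -2 \pmod{1+t}$. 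Your route is self-contained: it replaces the citation of Plans' theorem by the CRT splitting, and it makes the role of the parity hypothesis transparent (it enters exactly through $\gcd(2k+1,2)=1$, which is also why the $n=2$ answer is \emph{not} a double). The cost is that you need the cyclicity of the Alexander module as input, whereas the paper's approach works directly from a presentation without that structural step. One tiny caveat worth noting in your write-up: the identification $\Lambda/(f) \cong \Z[1/(k(k+1))]$ degenerates at $k=0$ (where $f=1$ is a unit and both sides vanish, so the conclusion $d=1$, $H_1(M_n)=0$ still holds trivially).
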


\begin{proof}  The homology group   $H_1(M_2(K(k,J)))$ is presented by $A_k + A_k^{\sf T}$, where $A^{\sf T}$ denotes the transpose.  This $2 \times 2$ matrix has one if its entries a 1, so it presents a cyclic group.  The order of that group is the absolute value of the determinant of the matrix.   As an alternative, the presence of $J$ does not affect the Seifert matrix or the homology of the cover.  If  $J$ is the unknot, then the  2--fold branched cover is the lens space $L((2k+1)^2, 2k)$.

The homology group   $H_1(M_n(K(k,J)))$ can be computed using a formula of Seifert~\cite{MR0035436}; see~\cite{MR1201199} for a more recent treatment.  In our notation, this result states that for a knot $K$ with Seifert matrix $B$, $H_1(M_n(K))$ is presented by 
\[ 
\Gamma^n  - (\Gamma - \text{Id})^n,
\]
where $\Gamma = (B^{\sf T} - B)^{-1} B^{\sf T}$.

In our case, one readily computes that 
\[
\Gamma =    \begin{pmatrix}k+1 & -k \\
0 & -k \\
\end{pmatrix},
\]
and thus we are interested in the group presented by 
\[
A_k =   \begin{pmatrix}
k+1 & -k \\
0 & -k \\
\end{pmatrix}^n - 
\begin{pmatrix} k & -k \\
0 & -k-1 \\
\end{pmatrix} ^n .
\]
For some $b$, this is of the form 
\[A_k = 
\begin{pmatrix} 
(k+1)^n - k^n & b \\
0 &(-k)^n- (-k-1)^n\  \\
\end{pmatrix} .
\]
Since $n$ is odd, this can be rewritten as
\[A_k = 
\begin{pmatrix} 
(k+1)^n - k^n & b \\
0 &(k+1)^n - k^n\  \\
\end{pmatrix} .
\]
With a bit more work we could show that $b = 0$, but instead we rely on a theorem of Plans~\cite{MR0056923} (or see~\cite[Chapter 8D]{MR0515288}): the homology of an odd-fold cycle branched cover is a double. 
\end{proof}

\subsection{A number theoretic observation}

In our examples, we considered the cases of   $H_1(M_3(K(1,U))) \cong \Z_{7} \oplus \Z_7$ and  $H_1(M_3(K(2,U))) \cong \Z_{19} \oplus \Z_{19}$.  We observed that both $\F_7$ and $\F_{19}$ contain primitive $3$--roots of unity, since $7 \equiv 1 \mod 3 $ and $19 \equiv 1 \mod 3$.  This is not a coincidence.   Our examples were the cases of $p =3$ and either $k=1$ or $k=2$ in the following theorem, which follows  immediately from a standard application of the binomial theorem or from  Fermat's Little Theorem. 

\begin{theorem}  If $ p$ is prime, then for all   $k$,  $(k+1)^p - k^p \equiv 1 \mod p$.
\end{theorem}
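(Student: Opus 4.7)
The plan is to give a short, elementary mod-$p$ calculation, since the theorem is a direct consequence of standard facts about the prime $p$ that we have two equally quick routes to.

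My preferred approach is via Fermat's Little Theorem. For any integer $a$ and any prime $p$, one has $a^p \equiv a \pmod{p}$. Applying this to $a = k+1$ and $a = k$ separately gives $(k+1)^p \equiv k+1 \pmod{p}$ and $k^p \equiv k \pmod{p}$. Subtracting yields $(k+1)^p - k^p \equiv (k+1) - k = 1 \pmod{p}$, which is exactly the claim. This works uniformly for every integer $k$ (positive, negative, or zero), so there is no case analysis to do.

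As an alternative that makes explicit why the congruence holds, I would instead expand by the binomial theorem:
\[
(k+1)^p - k^p = \sum_{i=0}^{p} \binom{p}{i} k^i - k^p = 1 + \sum_{i=1}^{p-1} \binom{p}{i} k^i.
\]
For each $1 \le i \le p-1$, the binomial coefficient $\binom{p}{i} = \frac{p!}{i!(p-i)!}$ is divisible by $p$, since the factor $p$ in the numerator is not cancelled by anything in the denominator. Hence the sum on the right is congruent to $1$ modulo $p$, which gives the result.

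There is essentially no obstacle here; the only thing to take care of is just to pick one of the two presentations and keep it to a single line. I would go with the Fermat's Little Theorem argument for brevity, and perhaps parenthetically mention the binomial expansion interpretation since the statement of the theorem itself already cites both.
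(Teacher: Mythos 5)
Your proof is correct and matches the paper's intent exactly: the paper itself states that the theorem ``follows immediately from a standard application of the binomial theorem or from Fermat's Little Theorem,'' and you supply both of those one-line arguments. Nothing to add.
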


\section{The eigenspace structure of $H_1(M_n(K), \F_p)$.}

In his survey paper on knot theory~\cite{MR521730}, Gordon used a duality argument to prove  that the first  homology of the infinite cyclic cover of a knot, viewed as a  module over the ring $\Z[\Z] \cong \Z[t, t^{-1}]$, is isomorphic to its dual module, in which the action of $t$ is replaced with the action of $t^{-1}$.  A similar argument can be applied in the setting of $n$--fold cyclic branched covers.  Here we give a simple proof of a consequence of such a result.  Duality is still required  to the extent that it implies that the linking form of a three-manifold is nonsingular. 

\begin{theorem}\label{thm:eigenequal} Assume that $H_1(M_n(K)) \cong \F_p^k$ for some $k$.  Suppose that $n$ divides $p-1$, so that $\F_p$ contains a primitive $n$--root of unity, $ \xi$.  Then $H_1(M_n(K))$ splits into a direct sum of  $\xi^i$--eigenspaces, denoted  $E_i$, under the action of the deck transformation $T_*$.   In addition, $E_0 $ is trivial and $E_i \cong E_{n-i}$ for all $i, 0 < i < n$.
\end{theorem}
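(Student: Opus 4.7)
The plan is to establish the three claims in order: eigenspace decomposition, triviality of $E_0$, and the isomorphism $E_i \cong E_{n-i}$.

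First I would note that because $n \mid p-1$, one has $\gcd(n,p)=1$ and $\F_p$ contains all $n$ distinct $n$-th roots of unity $1,\xi,\dots,\xi^{n-1}$, so $x^n-1$ splits over $\F_p$ into distinct linear factors. Since the induced deck transformation $T_*$ on $H_1(M_n(K),\F_p)$ satisfies $T_*^n = \mathrm{Id}$, its minimal polynomial divides $x^n-1$ and therefore has distinct roots in $\F_p$. Hence $T_*$ is diagonalizable over $\F_p$ and $H_1(M_n(K),\F_p) = \bigoplus_{i=0}^{n-1} E_i$, where $E_i = \ker(T_* - \xi^i)$.

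Triviality of $E_0$ is exactly the content of Lemma~\ref{lemma:1eigen}: the transfer homomorphism, well-defined because $\gcd(n,p)=1$, identifies $E_0$ with $H_1(S^3,\F_p) = 0$.

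The main step is the duality $E_i \cong E_{n-i}$. Since $H_1(M_n(K)) \cong \F_p^{\,k}$ is pure $p$-torsion, the nonsingular $\Q/\Z$-valued linking form on $H_1(M_n(K))$ takes values in $\tfrac{1}{p}\Z/\Z \cong \F_p$ and provides a nonsingular symmetric $\F_p$-bilinear pairing
\[
\lambda\co H_1(M_n(K),\F_p) \otimes H_1(M_n(K),\F_p) \longrightarrow \F_p.
\]
Because $T$ is an orientation-preserving self-diffeomorphism of $M_n(K)$, it preserves the linking form: $\lambda(T_* v, T_* w) = \lambda(v,w)$. For $v \in E_i$ and $w \in E_j$ this gives
\[
\lambda(v,w) = \lambda(T_* v, T_* w) = \xi^{i+j}\lambda(v,w),
\]
so $\lambda(v,w) = 0$ whenever $i + j \not\equiv 0 \pmod{n}$. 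Therefore the restriction of $\lambda$ to $E_i \oplus E_{n-i}$ must be nonsingular (otherwise an element in $E_i$ would be orthogonal to all of $H_1(M_n(K),\F_p)$, contradicting nonsingularity of $\lambda$). This yields an $\F_p$-linear isomorphism $E_i \cong \hom_{\F_p}(E_{n-i},\F_p)$, whence $\dim E_i = \dim E_{n-i}$, and therefore $E_i \cong E_{n-i}$ as $\F_p$-vector spaces.

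The only subtle point, and what I would expect to double-check carefully, is the claim that $T$ preserves the linking form (and acts by the identity, not by $-1$, on the orientation class): this follows from $T$ being an orientation-preserving diffeomorphism of the closed $3$-manifold $M_n(K)$, but one has to be mindful of sign conventions in the definition of $\lambda$ through Poincaré duality and the Bockstein. Once that is pinned down, the rest of the argument is immediate.
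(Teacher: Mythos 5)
Your proof is correct and follows essentially the same approach as the paper: diagonalizability of $T_*$ because $x^n-1$ has distinct roots over $\F_p$, triviality of $E_0$ via the transfer map, and equivariance of the $\F_p$-linking form forcing $E_i$ and $E_j$ to be orthogonal unless $i+j\equiv 0\pmod n$, which together with nonsingularity pairs $E_i$ perfectly with $E_{n-i}$. The only cosmetic difference is the final dimension count: you pass directly to the isomorphism $E_i\cong\hom_{\F_p}(E_{n-i},\F_p)$, whereas the paper phrases the same fact as a block-matrix rank argument ($a\le(a+b)/2$ and $b\le(a+b)/2$ implies $a=b$).
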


\begin{proof}  Since  $T_*$ satisfies $T_*^n = 1$, the splitting into a direct sum of eigenspaces is an elementary fact from linear algebra. 

Let $ {\rm{lk}}(x,y) \in \F_p$ denote the $\F_p$--valued linking form on $H_1(M_n(K))$.   Recall that the linking form is symmetric, nonsingular and equivariant with respect to the action of a homeomorphism, in particular with respect to $T_*$.  \smallskip

\noindent{\bf Claim 1:}  The eigenspaces $E_i$ and $E_j$ are orthogonal with respect to the linking form unless $i= j = 0$ or $i = n- j$.  

To see this, suppose that $x \in E_i$ and $y \in E_j$.  Then 
\[
\xi^i  {\rm{lk}}(x,y) =  {\rm{lk}}(T_*x,y) =  {\rm{lk}}(x,T_*^{-1}y) =  {\rm{lk}}(x,\xi^{-j}y) =  \xi^{-j}{\rm{lk}}(x,y).
\]
It follows that  $
(\xi^i   -\xi^{-j} ) {\rm{lk}}(x,y)  =0 .$  This can be rewritten as  $
(\xi^i   -\xi^{n-j} ) {\rm{lk}}(x,y)  =0 .$  If $i \ne 0$, then $ \xi^i   -\xi^{n-j} \ne 0$ unless $i = n-j$.  Thus, if $i \ne 0$ and $i \ne n-j$, then $ {\rm{lk}}(x,y)  =0$.\smallskip

\noindent{\bf Claim 2:}  $E_{0} $ is trivial.  We can now write
\[
H_1(M(K)) \cong E_0  \oplus E_{n/2} \bigoplus_{1\le i < n/2} \big( E_i \oplus E_{n-i}\big) . 
\]
(The    summand $E_{n/2}$ exists if and only if $n$ is even, in which case it represents the $-1$--eigenspace.)  
  
  If $x \in E_0$, then $x + T_*x + \cdots + T_*^{n-1}x = n x$ is in the image of the transfer map $\tau \co H_1(S^3) \to H_1(M_n(K))$, and thus equals 0.  We can write $p-1 = nk$ for some $k$, and so $(p-1)x = 0$.  But $p-1$ is relatively prime to $p$, and so we have $x = 0$, as desired.

\smallskip 

\noindent{\bf Claim 3:}   $E_i \cong E_{n-i}$ for all $i, 0 < i < n$.

This is automatic for $E_{n/2}$  in the case the $n$ is even.  We focus on a summand    $E_i \oplus E_{n-i} $ for $1\le i < n/2$.

Suppose that $E_i$ is of dimension $a$ and  $E_{n-i}$ is of dimension $b$.  By choosing bases for these  eigenspaces, the linking form can be represented by an $(a+b) \times (a+b)$ matrix with entries in $\F_p$.  Both $E_i$ and $E_{n-i}$ are self-orthogonal, so there are blocks with all entries 0 of size $a \times a$ and $b \times b$.  The nonsingularity implies that $a \le (a+b) /2$ and $b \le (a+b)/2$.  This can occur   only if $a = b$.
\end{proof}


\bibliography{../BibTexComplete}
\bibliographystyle{plain}	

\end{document}